\newcommand{\E}{\mathbf{E}}
\renewcommand{\P}{\mathbf{P}}
\renewcommand {\epsilon}{\varepsilon}
\newtheorem{thm}{Theorem}[section]
\newtheorem{prp}[thm]{Proposition}
\newtheorem{lem}[thm]{Lemma}
\theoremstyle{definition}
\newtheorem{dfn}[thm]{Definition}
\newtheorem{rem}[thm]{Remark}
\DeclareMathSymbol{\ophi}{\mathalpha}{letters}{"1E}
\let\oldmarginpar\marginpar
\renewcommand{\marginpar}[1]{\oldmarginpar{\scriptsize\texttt{\color{blue}{#1}}}}
\newcommand{\e}{\varepsilon}
\renewcommand{\phi}{\varphi}
\newcommand{\be}{\begin{equation}}
\newcommand{\ee}{\end{equation}}
\newcommand{\ben}{\begin{equation*}}
\newcommand{\een}{\end{equation*}}
\newcommand{\ba}{\begin{equation}\begin{aligned}}
\newcommand{\ea}{\end{aligned}\end{equation}}
\newcommand{\ban}{\begin{equation*}\begin{aligned}}
\newcommand{\ean}{\end{aligned}\end{equation*}}
\DeclareMathOperator{\sign}{sign}
\DeclareMathOperator{\Law}{Law}
\newcommand{\ex}{\mathrm{e}}
\newcommand{\di}{\mathrm{d}}
\newcommand{\abs}[1]{\left|#1\right|}
\newcommand{\scprod}[1]{\left\langle#1\right\rangle}
\newcommand{\rF}{\mathscr{F}}
\newcommand{\bF}{\mathbb{F}}
\newcommand{\bI}{\mathbb{I}}
\newcommand{\bR}{\mathbb{R}}
\newfont{\cyrfnt}{wncyr10}
\def\J3{\cyrfnt{\rm \u{\cyrfnt I}}}
\def\j3{\cyrfnt{\rm \u{\cyrfnt i}}}
\numberwithin{equation}{section}
\newcommand{\ansref}[2]{#2}
\newcommand{\chng}[2][]{ #2}
\begin{document}
%\layout
\title{A Stratonovich SDE with irregular coefficients: \\Girsanov's example revisited}

% % \date{\today}%\footnote{{\jobname}.tex} }

\date{\today}

\author{Ilya Pavlyukevich\footnote{Institute for Mathematics, Friedrich Schiller University Jena, Ernst--Abbe--Platz 2, 07743 Jena, Germany;
\texttt{ilya.pavlyukevich@uni-jena.de}}\ \  and Georgiy Shevchenko\footnote{Department of Probability Theory, 
 Statistics and Actuarial Mathematics, Taras Shevchenko National University of Kyiv, Volodymyrska 64, Kiev 01601, Ukraine; 
\texttt{zhora@univ.kiev.ua}}
}

\maketitle

% \begin{footnotesize}\tableofcontents\end{footnotesize}

\begin{abstract}
In this paper we study the Stra\-to\-no\-vich stochastic differential equation $\di X=|X|^{\alpha}\circ\di B$, $\alpha\in(-1,1)$,
which has been introduced by Cherstvy et al.\ [New Journal of Physics 15:083039 (2013)] in the context of analysis of anomalous diffusions 
in heterogeneous media.
We determine its weak and strong solutions, which are homogeneous strong Markov processes 
\chng{spending zero time at $0$: for $\alpha\in (0,1)$, these solutions have the form 
\ban
X_t^\theta=\bigl((1-\alpha)B_t^\theta\bigr)^{1/(1-\alpha)},
\ean
where $B^\theta$ is the $\theta$-skew 
Brownian motion driven by $B$ and starting at $\frac{1}{1-\alpha}(X_0)^{1-\alpha}$, $\theta\in [-1,1]$,}
and $(x)^{\gamma}=|x|^\gamma\sign x$; for $\alpha\in(-1,0]$, only the case $\theta=0$ is possible.
The central part of the paper consists in the proof of the existence of a quadratic covariation $[f(B^\theta),B]$ for a locally square integrable
function $f$ and is based on the time-reversion technique for Markovian diffusions.\smallskip

\textbf{Keywords:} Stratonovich integral, Girsanov's example, non-uniqueness, singular stochastic differential equation, 
skew Brownian motion, time-reversion,
generalized It\^o's formula, local time, heterogeneous diffusion process.

\smallskip

\textbf{MSC 2010 subject classification:} Primary 60H10; secondary 60J55, 60J60.
\end{abstract}

\section{Introduction}

\citet{Girsanov-62} considered the It\^o stochastic differential equation
\ba
\label{e:2}
Y_t=Y_0+\int_0^t |Y_s|^\alpha\, \di B_s,\quad t\ge 0,
\ea
driven by a standard Brownian motion $B$ as an example of an SDE with non-unique solution.
In particular, it was shown that for $\alpha\in(0,1/2)$, equation \eqref{e:2} has 
infinitely many continuous strong Markov (weak) solutions as well as non-homogeneous Markov solutions; non-Markovian solutions can also be constructed. 

Since then, equation \eqref{e:2} serves as a benchmark example of various peculiar effects which come to 
light when one weakens the standard regularity assumptions on the coefficients of an SDE.

The proof of weak uniqueness for $\alpha\geq 1/2$ and non-uniqueness for $\alpha\in (0,1/2)$ with the help of random time change
was given by \cite[\S 3.10b]{McKean-1969} whereas a construction of an uncountable set of weak solutions can be found in 
\cite[Example 3.3]{EngelbertSchmidt-85}. The existence and uniqueness of a strong solution
for $\alpha\in[1/2,1]$ was established by 
\cite[Theorem 4]{zvonkin74}. 

Furthermore for $\alpha\in(0,1/2)$, it was shown in 
\cite[Theorem 5.2]{EngelbertSchmidt-85} that for every initial value $Y_0\in\bR$, there is a weak
solution to \eqref{e:2} that spends zero time at $0$ (the so-called fundamental solution) and the law of such a solution is
unique. Path-wise uniqueness among those
solutions to \eqref{e:2} that spend zero time at $0$, and existence of a strong solution was proven by 
\cite[Theorem 1.2]{BassBC-07}.

An analogue of \eqref{e:2} with the \emph{Stratonovich integral}
\ba
\label{e:1}
X_t=X_0+\int_0^t |X_s|^\alpha\circ \di B_s
\ea
was recently introduced in the physical literature
by \cite{CherstvyCM-13}  under the name \emph{heterogeneous diffusion process}. The authors studied the autocorrelation 
function of this process analytically and investigated its sub- and super-diffusive behaviour with the help of numerical simulations.
A similar system with an additional linear drift was considered earlier by \cite{DenHor-02}.

In this paper we will further investigate equation \eqref{e:1} with $\alpha\in (-1,1)$.
It turns out that equation \eqref{e:1} has properties quite different from its It\^o counterpart.   
Let us first make some observations about it. The only problematic point of the diffusion coefficient 
$\sigma(x) = |x|^\alpha$ is $x=0$. For  $\alpha\in (0,1)$, the Lipschitz continuity fails at this point, and for $\alpha\in(-1,0)$ even the 
continuity and boundedness. However one can easily solve \eqref{e:1} locally for initial points $X_0\neq 0$. 

Indeed, assume for definiteness that $X_0>0$. For any $\e>0$, using the properties of the Stratonovich integral, we see that the process given by
\ba
\label{e:X}
X_t^0=\Big((1-\alpha)B_t+X_0^{1-\alpha}\Big)^\frac{1}{1-\alpha}.
\ea
solves equation \eqref{e:1} until the time 
$
\tau_{\e}=\inf\{t\geq 0\colon X_t = \e\} %=\inf_{t\geq 0}\Big\{t\geq 0\colon B_t= \frac{\e^{1-\alpha} - X_0^{1-\alpha}}{1-\alpha} \Big\}
$.
Moreover, the solution is unique until $\tau_{\e}$. Consequently, the formula \eqref{e:X} defines a unique strong solution until the time 
$
\tau_{0}=\inf\{t\geq 0\colon X_t^0 = 0\}$
when the process first hits zero. It is clear that extending the solution by zero value beyond 
$\tau_0$ gives a strong solution. However, the uniqueness fails: as it is shown in Section~\ref{sec:bm}, 
the formula \eqref{e:X} defines a strong solution (called a benchmark solution), if we understand 
\ansref{2}{the right-hand side} as a signed power function, 
i.e.\  $(x)^{\frac{1}{1-\alpha}} = \abs{x}^{\frac{1}{1-\alpha}} \sign x$. 
In Section~\ref{sec:bm}, we also construct some non-Markov solutions of \eqref{e:1}. 

\chng[R2: elaborated on the structure]{The next question is whether, as for the It\^o equation, uniqueness holds within the class of 
solutions spending zero time at $0$; this question is addressed in Section~\ref{sec:0@0}. For $\alpha\in (0,1)$, this question is answered negatively: 
in Theorem \ref{t:main} 
we show that equation \eqref{e:1} has also `skew' solutions 
\ban
X_t^\theta=\Big|(1-\alpha)B^\theta_t\Big|^\frac{1}{1-\alpha}
\cdot \sign\Big((1-\alpha)B^\theta_t \Big),
\ean
where for $\theta\in[-1,1]$, $B^\theta$ is the skew Brownian motion, which solves the stochastic 
differential equation $B^\theta_t =\frac{1}{1-\alpha}(X_0)^{1-\alpha}+ B_t+\theta L_t(B^\theta)$, $L$ being the symmetric local time at $0$.
Moreover, we show all solutions, which are homogeneous strong Markov processes and which spend zero time at $0$, have this form.

In Section~\ref{sec:etc}, we propose an explanation for a diversity of strong solutions to \eqref{e:1} and discuss further questions regarding the equation. 
Sections~\ref{sec:six} and \ref{sec:seven} contain the proofs of results concerning weak solutions to  equation \eqref{e:1}, Section~\ref{s:Y} contains the proof of existence of the bracket $[f(B^\theta),B]$, and Section~\ref{sec:nine} is devoted to the proof of the main result concerning strong solutions.
}

\section{Preliminaries and conventions} \label{sec:prelim}

Throughout the article, we work on a stochastic basis $(\Omega,\rF,\bF,\P)$, i.e.\ a complete 
probability space with a filtration $\bF = (\rF_t)_{t\ge 0}$ satisfying the standard assumptions. 
The process $B = (B_t)_{t\ge 0}$ is a standard continuous Brownian motion on this stochastic basis. 
% It is well known that $B$ has a continuous modification. We assume that this modification is choses, i.e.\ $B$ is continuous. 

First we briefly recall definitions related to stochastic integration. More details may be found in \cite{Protter-04}. 

The main mode of convergence considered here is the uniform convergence on compacts in probability  (the u.c.p.\ convergence for short): 
a sequence $X^n = (X_t^n)_{t\ge 0}, n\ge 1$, of stochastic processes converges to $X = (X_t)_{t\ge 0}$ in u.c.p.\ if for any $t\ge 0$ 
$$
\sup_{s\in[0,t]} \abs{X^n_s - X_s} \overset{\P}{\longrightarrow} 0, n\to\infty.
$$

Let a sequence of partitions $D_n=\{0=t^n_0<t_1^n<t_2^n<\cdots\}=\{0=t_0<t_1<t_2<\cdots\}$ of $[0, \infty)$ 
be such that for each $t\geq 0$ the number of points in each interval $[0, t]$ is finite, 
and  $\|D_n\|:=\sup_{k\ge 1}\abs{t_{k}^n - t_{k-1}^n}\to 0$ as $n\to \infty$.
\ansref{2}{A continuous process $X$ has quadratic variation $[X]$ if the limit
\ban
{}[X]_t:=\lim_{n\to \infty} \sum_{t_k\in D_n, t_k<t}(X_{t_{k+1}}-X_{t_{k}})^2
\ean
exists in the u.c.p.\ sense.
Similarly, the quadratic 
covariation $[X,Y]$ of two continuous processes $X$ and $Y$ is defined as a limit in u.c.p.\
\ban
{}[X,Y]_t:=\lim_{n\to \infty} \sum_{t_k\in D_n, t_k<t}(X_{t_{k+1}}-X_{t_{k}})(Y_{t_{k+1}}-Y_{t_{k}}).
\ean
} 
When $X$ and $Y$ are semimartingales, the quadratic variations $[X]$, $[Y]$ and the quadratic 
covariation $[X,Y]$ exist, moreover, they have bounded variation on any finite interval.

Further, we define the It\^o (forward) integral as a limit %\marginpar{NB: for the Ito int. wrt B we assume the quadratic integrability}
in u.c.p.\
\ban
\int_0^t X_s\, \di Y_s=\lim_{n\to \infty} \sum_{t_k\in D_n, t_k<t} X_{t_{k}}(Y_{t_{k+1}}-Y_{t_{k}})
\ean
and the Stratonovich (symmetric) integral as limit in u.c.p.\
\ban
&\int_0^t X_s\circ \di Y_s= \int_0^t X_s\, \di Y_s+\frac 12 [X,Y]_t  
\\&= \lim_{n\to \infty} \sum_{t_k\in D_n, t_k<t} \frac12 (X_{t_{k+1}}+X_{t_k})(Y_{t_{k+1}}-Y_{t_{k}}),
\ean
provided that both the It\^o integral and the quadratic variation exist. Again, when both $X$ and $Y$ are continuous semimartingales, 
both integrals exists, and the convergence holds in u.c.p.
There is an alternative approach to Stratonovich stochastic integration, developed in 
\cite{russo1993forward,russo1995generalized,russo2000stochastic}, which allows integration
with respect to non-semimartingales and non-Markov processes like fractional Brownian motion.

For a process $X$, by $L_t(X)$ we denote the symmetric local time at zero defined as the limit in probability
\ba
L_t(X)=\lim_{\e \downarrow 0}\frac{1}{2\e}\int_0^t \bI_{[-\e,\e]}(X_s)\,\di s.
\ea

In this paper, we define 
\ban
\sign x=\begin{cases}
            -1,\ x< 0,\\
            0,\ x=0,\\
            1,\ x>0,
           \end{cases}
\ean
and for any $\alpha\in \bR$ we set
\ban
|x|^\alpha=\begin{cases}
            |x|^\alpha,\ x\neq 0,\\
            0,\ x=0.
           \end{cases}
\ean
With this notation, for example, for $\alpha=0$ we have $|x|^0=\bI(x\neq 0)$.
We also denote 
\ban
(x)^\alpha = |x|^\alpha \sign x.
\ean
Throughout the article, $C$ will be used to denote a generic constant, whose value is not important and may change between lines. 

\section{Benchmark solution}\label{sec:bm}

Now we turn to equation \eqref{e:1}. The concept of strong solution is defined in a standard manner.
\begin{dfn}
A strong solution to \eqref{e:1} is a continuous stochastic process $X$ such that 
\begin{enumerate}
\item
$X$ is adapted to the augmented 
natural filtration of $B$;
\item 
for any $t\geq 0$, the It\^o integral $\int_0^t |X_s|^\alpha \di B_s$ and the quadratic covariation 
$[|X|^\alpha,\ B]_t$ exist;
\item 
for any $t\geq 0$, equation \eqref{e:1}  holds $\P$-a.s.
\end{enumerate}
\end{dfn}

Define the \emph{benchmark solution} to equation \eqref{e:1} by
\ba
\label{e:benchmark}
X^0_t=\Big((1-\alpha)B_t+(X_0)^{1-\alpha}\Big)^\frac{1}{1-\alpha}.
\ea
The following change of variable result is crucial for proving that it solves equation \eqref{e:1}. 
\begin{thm}[Theorem 4.1, \cite{FPSh-95}]\label{t:ito}
Let $F$ be absolutely continuous with locally square integrable derivative $f$. Then
\ban
F(B_t) =F(B_0)+ \int_0^t f(B_s)\,\di B_s + \frac12[f(B), B]_t.
\ean
\end{thm}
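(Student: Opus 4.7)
The plan is to reduce to the smooth case by approximation and then pass to the limit, identifying the bracket $[f(B),B]_t$ along the way. Choose a sequence $(f_n)\subset C^\infty(\bR)$ with $f_n\to f$ in $L^2_{\mathrm{loc}}(\bR)$, and set $F_n(x)=F(0)+\int_0^x f_n(y)\,\di y$, so that by Cauchy--Schwarz $F_n\to F$ locally uniformly on $\bR$. For smooth $f_n$ the classical It\^o formula gives
\ban
F_n(B_t)=F_n(B_0)+\int_0^t f_n(B_s)\,\di B_s+\frac12\int_0^t f_n'(B_s)\,\di s,
\ean
and the last term equals $\frac12[f_n(B),B]_t$ by the usual integration-by-parts identity for semimartingales (both objects agree with $\langle f_n(B),B\rangle$). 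This yields the desired formula for each $n$.

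Next I would push the approximation through. The convergence $F_n(B_t)\to F(B_t)$ and $F_n(B_0)\to F(B_0)$ is immediate from local uniform convergence of $F_n$. For the It\^o integrals, the key tool is the occupation times formula,
\ban
\E\int_0^t \bigl(f_n(B_s)-f(B_s)\bigr)^2\,\di s=\int_\bR \bigl(f_n(a)-f(a)\bigr)^2\,\E L_t^a(B)\,\di a,
\ean
where $a\mapsto \E L_t^a(B)$ is bounded and decays fast enough so that the right-hand side tends to $0$ for each $t\ge 0$ (after a standard localization, since $f_n\to f$ in $L^2_{\mathrm{loc}}$). It\^o's isometry and Doob's inequality then give $\int_0^\cdot f_n(B_s)\,\di B_s\to \int_0^\cdot f(B_s)\,\di B_s$ in u.c.p.

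Combining these convergences, we obtain the existence of the u.c.p.\ limit
\ban
\lim_{n\to\infty}\tfrac12 [f_n(B),B]_t=F(B_t)-F(B_0)-\int_0^t f(B_s)\,\di B_s,
\ean
which is precisely the candidate for $\tfrac12[f(B),B]_t$. The main obstacle, and the heart of the argument, is to show that this candidate coincides with the bracket defined via Riemann sums on partitions $D_n$, i.e.\ that
\ban
\sum_{t_k\in D_n,t_k<t}\bigl(f(B_{t_{k+1}})-f(B_{t_k})\bigr)\bigl(B_{t_{k+1}}-B_{t_k}\bigr)
\ean
converges in u.c.p.\ to the same limit. This requires controlling a double limit (in the partition mesh and in the approximation index) uniformly, which is typically done by an $L^2$-estimate on the Riemann sums based again on the occupation times formula applied to $(f-f_m)^2$, followed by a diagonal extraction. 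Once this is established, we read off both the existence of the covariation $[f(B),B]_t$ and the stated identity simultaneously.
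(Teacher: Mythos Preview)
The paper does not prove this theorem; it quotes it from \cite{FPSh-95}. The relevant comparison is with the method of \cite{FPSh-95}, which the paper reproduces in Section~\ref{s:Y} when proving the generalization Theorem~\ref{t:cov}.

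Your outline is correct up to the point you yourself flag as ``the main obstacle,'' and the gap is there. You propose to control the double limit by an $L^2$-estimate on the Riemann sums via the occupation times formula applied to $(f-f_m)^2$. That works for the \emph{forward} sums $\sum (f-f_m)(B_{t_k})(B_{t_{k+1}}-B_{t_k})$, which are martingale transforms and whose second moment is $\sum \E[(f-f_m)(B_{t_k})^2](t_{k+1}-t_k)$. But the bracket sums also contain the \emph{backward} part $\sum (f-f_m)(B_{t_{k+1}})(B_{t_{k+1}}-B_{t_k})$, in which the integrand is not $\rF_{t_k}$-measurable. The cross terms in the $L^2$ expansion do not vanish, and the occupation times formula gives no handle on them; ``diagonal extraction'' does not rescue you without a uniform bound.

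The device \cite{FPSh-95} actually use --- and the paper uses in Section~\ref{s:Y} --- is \emph{time reversal}: the reversed process $\bar B_t=B_{T-t}$ is a semimartingale in its own filtration (a Brownian motion plus an explicit drift), so the backward Riemann sums for $B$ become forward Riemann sums for $\bar B$ and are controlled by the same It\^o-isometry/Doob argument. This yields the backward integral $\int_0^t f(B_s)\,\di^*B_s$ and identifies $[f(B),B]_t$ as the difference of backward and forward integrals. Your approximation programme can be completed, but only by inserting this time-reversal step where you currently invoke an unsubstantiated $L^2$ bound.
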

\begin{thm}
\label{t:exists}
For $\alpha\in (-1,1)$ and $X_0\in \bR$, the process $X^{0}$ given by \eqref{e:benchmark} is a strong solution to \eqref{e:1}.
\end{thm}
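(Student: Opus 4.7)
The plan is to apply the generalized It\^o formula of Theorem~\ref{t:ito} to the deterministic map
\[
G(y) = \bigl((1-\alpha)y + (X_0)^{1-\alpha}\bigr)^{1/(1-\alpha)},
\]
so that $X_t^0 = G(B_t)$ and, in view of the sign convention $(x)^\gamma = |x|^\gamma\sgn x$, also $G(0) = X_0$. Outside the at most one point $y_\ast$ at which the argument of the power vanishes, $G$ is classically differentiable and a direct chain-rule calculation (the factor $1/(1-\alpha)$ cancels with the outer $(1-\alpha)$) gives
\[
G'(y) = \bigl|(1-\alpha)y + (X_0)^{1-\alpha}\bigr|^{\alpha/(1-\alpha)},
\]
which yields the crucial identity $G'(B_s) = |X_s^0|^\alpha$.

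First I would verify the hypotheses of Theorem~\ref{t:ito}. Since $G$ is continuous on $\bR$ and $C^1$ away from $y_\ast$, local absolute continuity follows once the derivative is shown to be locally integrable. The key quantitative point is local square-integrability: near $y_\ast$ the quantity $|G'(y)|^2$ is of order $|y-y_\ast|^{2\alpha/(1-\alpha)}$, and this is integrable on compacts precisely when $2\alpha/(1-\alpha) > -1$, i.e.\ when $\alpha > -1$. Hence the hypothesis is met for every $\alpha\in(-1,1)$; this is the one and only place in the argument where the range of $\alpha$ enters nontrivially, and I expect it to be the main (rather modest) obstacle.

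Theorem~\ref{t:ito} then gives
\[
X_t^0 = G(B_t) = X_0 + \int_0^t G'(B_s)\,\di B_s + \tfrac12\bigl[G'(B),B\bigr]_t.
\]
Substituting $G'(B_s) = |X_s^0|^\alpha$ simultaneously yields the existence of the It\^o integral $\int_0^t |X_s^0|^\alpha\,\di B_s$ and of the covariation $[|X^0|^\alpha,B]_t$, together with the identity
\[
X_t^0 = X_0 + \int_0^t |X_s^0|^\alpha\,\di B_s + \tfrac12\bigl[|X^0|^\alpha,B\bigr]_t = X_0 + \int_0^t |X_s^0|^\alpha\circ\di B_s.
\]
Adaptedness of $X^0$ to the augmented natural filtration of $B$ is immediate from the representation $X^0 = G(B)$, so all three conditions in the definition of strong solution are verified.
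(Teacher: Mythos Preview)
Your proposal is correct and follows essentially the same route as the paper: define the function $G$ (the paper calls it $F$), check that it is absolutely continuous with locally square-integrable derivative $G'=|G|^\alpha$, and apply Theorem~\ref{t:ito}. You supply more detail than the paper---in particular the explicit integrability check $2\alpha/(1-\alpha)>-1\Leftrightarrow\alpha>-1$ pinpointing where the lower bound on $\alpha$ enters---but the argument is the same.
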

\begin{proof}
For each $X_0\in \bR$, we note that the function
\ban
F(x)=\Big((1-\alpha)x-(X_0)^{1-\alpha}\Big)^\frac{1}{1-\alpha}
\ean
is absolutely continuous for $\alpha\in (-1,1)$ and its derivative
\ban
f(x)= \Big|(1-\alpha)x-(X_0)^{1-\alpha}\Big|^\frac{\alpha}{1-\alpha}=|F(x)|^\alpha
\ean
is locally square integrable, so by Theorem~\ref{t:ito} the process $X^{0}=F(B)$ satisfies \eqref{e:1}.
\end{proof}
As explained in the introduction, the benchmark solution is a unique strong solution until the time 
\ba\label{e:tau0}
\tau_{0}=\inf\{t\geq 0\colon X^0_t = 0\}=\inf_{t\geq 0}\Big\{t\geq 0\colon B_t= -\frac{(X_0)^{1-\alpha}}{1-\alpha} \Big\}.
\ea
when it first hits $0$. 
However, the uniqueness fails after this time. Namely, with the help of Theorem \ref{t:ito} one can easily construct other strong solutions; the proofs are the same as for $X^0$ and therefore omitted. One example is the solution stopped at $0$. 
%Define 
%\ba\label{e:tau0}
%\tau_{0}=\inf\{t\geq 0\colon X_t = 0\}=\inf_{t\geq 0}\Big\{t\geq 0\colon B_t= -\frac{ X_0^{1-\alpha}}{1-\alpha} \Big\}.
%\ea
\begin{thm}
For $\alpha\in (-1,1)$ and $X_0\in \bR$, the process 
$$
X'_t=  \Big((1-\alpha)B_t+(X_0)^{1-\alpha}\Big)^\frac{1}{1-\alpha}\bI_{t\le \tau_0},
$$
where  $\tau_0$ is given by  \eqref{e:tau0}, is a strong solution to \eqref{e:1}.
\end{thm}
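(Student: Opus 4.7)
The plan is to view $X'$ as the benchmark solution stopped at $\tau_0$: since $X^0_{\tau_0}=0$ by the definition of $\tau_0$, we have the clean identification $X'_t=X^0_{t\wedge\tau_0}$, and in particular $X'$ is continuous. With this, I would verify the three requirements for a strong solution by localizing the corresponding statements already established for $X^0$ in Theorem~\ref{t:exists}.

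Adaptedness is immediate, since $\tau_0$ is an $\bF$-stopping time and $X^0$ is $\bF$-adapted. The convention $|0|^\alpha=0$ gives the pointwise identity $|X'_s|^\alpha=|X^0_s|^\alpha\bI_{s<\tau_0}$, so by standard properties of stochastic integrals
\ban
\int_0^t|X'_s|^\alpha\,\di B_s=\int_0^{t\wedge\tau_0}|X^0_s|^\alpha\,\di B_s,
\ean
which exists by Theorem~\ref{t:exists}. Once we also establish the identity $[|X'|^\alpha,B]_t=[|X^0|^\alpha,B]_{t\wedge\tau_0}$, stopping the Stratonovich decomposition of $X^0$ at $\tau_0$ yields
\ban
X'_t=X^0_{t\wedge\tau_0}=X_0+\int_0^{t\wedge\tau_0}|X^0_s|^\alpha\,\di B_s+\tfrac12[|X^0|^\alpha,B]_{t\wedge\tau_0}=X_0+\int_0^t|X'_s|^\alpha\circ\di B_s,
\ean
completing the verification.

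The step I expect to be the main obstacle is the quadratic-covariation identity, because for $\alpha\in(-1,0)$ the process $|X^0|^\alpha$ blows up as $s\uparrow\tau_0$, so $|X'|^\alpha$ fails to be continuous and the usual stopped-semimartingale machinery does not apply directly. I would therefore argue from the u.c.p.\ definition itself: for any partition $D_n$, the approximating sum for $[|X'|^\alpha,B]_t$ agrees term-by-term with that for $[|X^0|^\alpha,B]_{t\wedge\tau_0}$ on all intervals lying inside $[0,\tau_0)$, and vanishes identically on intervals lying to the right of $\tau_0$ by the convention $|0|^\alpha=0$. On the unique straddling interval $[t_{k^*},t_{k^*+1}]$ with $t_{k^*}<\tau_0\le t_{k^*+1}$, the two sums differ by exactly $|X^0_{t_{k^*+1}}|^\alpha(B_{t_{k^*+1}}-B_{t_{k^*}})$. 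Since $|X^0_t|$ is comparable to $|B_t-B_{\tau_0}|^{1/(1-\alpha)}$ near $\tau_0$, invoking the modulus of continuity of $B$ bounds this single remainder by an expression of order $\|D_n\|^{1/(2(1-\alpha))}$, which vanishes as $\|D_n\|\to 0$ since $\alpha<1$. This gives u.c.p.\ convergence and closes the proof.
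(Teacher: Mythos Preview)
The paper itself omits this proof, asserting only that ``the proofs are the same as for $X^0$ and therefore omitted.'' Strictly speaking that claim is loose: unlike $X^0$ and the family $X^{A,B}$, the stopped process $X'$ is \emph{not} of the form $G(B_t)$ for a fixed function $G$, so Theorem~\ref{t:ito} does not apply verbatim. Your stopping argument is therefore more honest than what the paper records, and for $\alpha\in[0,1)$ it is complete: there $|X'|^\alpha$ is continuous, the straddling remainder is a locally bounded quantity times a vanishing Brownian increment, and everything goes through.

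For $\alpha\in(-1,0)$, however, the final step has a genuine gap. Writing $U:=B_{t_{k^*+1}}-B_{\tau_0}$, the remainder you isolate equals $C\,|U|^{\alpha/(1-\alpha)}(B_{t_{k^*+1}}-B_{t_{k^*}})$. The modulus of continuity of $B$ bounds $|U|$ \emph{from above}; but since the exponent $\alpha/(1-\alpha)$ is negative, this produces only a \emph{lower} bound on $|U|^{\alpha/(1-\alpha)}$---the opposite of what you need. Nothing in your argument prevents the partition point $t_{k^*+1}$ from landing so close to $\tau_0$ that $|U|^{\alpha/(1-\alpha)}$ blows up faster than the Brownian increment shrinks, so the claimed $\|D_n\|^{1/(2(1-\alpha))}$ bound is not justified.

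A repair is available but requires more than the modulus of continuity. Split $B_{t_{k^*+1}}-B_{t_{k^*}}=U+(B_{\tau_0}-B_{t_{k^*}})$. The $U$-contribution is $\pm C|U|^{1/(1-\alpha)}$, which \emph{does} vanish by the modulus of continuity since $1/(1-\alpha)>0$. For the remaining piece $C|U|^{\alpha/(1-\alpha)}(B_{\tau_0}-B_{t_{k^*}})$, use the strong Markov property at $\tau_0$: conditionally on $\rF_{\tau_0}$, $U$ is centred Gaussian with variance $s_n=t_{k^*+1}-\tau_0\le\|D_n\|$, independent of the $\rF_{\tau_0}$-measurable factor $B_{\tau_0}-B_{t_{k^*}}$, and $\E|N|^{\alpha/(1-\alpha)}<\infty$ since $\alpha/(1-\alpha)\in(-\tfrac12,0)$. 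Turning this into convergence in probability still requires controlling the joint behaviour of $s_n^{\alpha/(2(1-\alpha))}$ and $|B_{\tau_0}-B_{t_{k^*}}|$ (both $\rF_{\tau_0}$-measurable and correlated through $\tau_0$), for instance via a moment estimate using the density of $\tau_0$; this is doable but is real work beyond ``invoking the modulus of continuity''.
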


Both $X^0$ and $X'$ possess the strong Markov property, thanks to that of $B$. One can also construct an uncountable 
family of non-Markov solutions.
Namely, for any  $A,B>0$,
set
\ban
F_{A,B}(x)=\begin{cases}
                 -|x+A|^\frac{1}{1-\alpha},\quad x<-A,\\
                 0,\quad -A\leq x\leq B,\\
                 |x-B|^\frac{1}{1-\alpha},\quad x>B.
                \end{cases}
\ean
and  $$X^{A,B}_t=F_{A,B}\bigl((1-\alpha)B_t+(X_0)^{1-\alpha}\bigr)$$ which equals to zero as long as 
$(1-\alpha)B_t+(X_0)^{1-\alpha}  \in[-A,B]$.  
\begin{thm}
For $\alpha\in (-1,1)$ and $X_0\in \bR$, the process $X^{A,B}$ is a strong solution to \eqref{e:1}.
\end{thm}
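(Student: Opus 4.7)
The plan is to reduce this to Theorem~\ref{t:exists} by exactly the same mechanism used for the benchmark solution: represent $X^{A,B}$ as $G(B)$ for a single scalar function $G$, verify the hypotheses of the generalized It\^o formula (Theorem~\ref{t:ito}), and observe that the resulting change-of-variable identity is precisely \eqref{e:1}. Accordingly, I would set
\ban
G(x):=F_{A,B}\bigl((1-\alpha)x+(X_0)^{1-\alpha}\bigr),
\ean
so that $X^{A,B}_t=G(B_t)$ by construction and $X^{A,B}_0=G(0)=F_{A,B}((X_0)^{1-\alpha})$.

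Next I would verify that $G$ is absolutely continuous. Since $F_{A,B}$ is smooth on each of the three pieces $(-\infty,-A)$, $(-A,B)$, $(B,\infty)$ and its one-sided limits at the junction points $-A$ and $B$ agree (both equal zero), $F_{A,B}$ is absolutely continuous on $\bR$; composition with an affine map preserves this. A direct computation then gives
\ban
F_{A,B}'(y)=\frac{1}{1-\alpha}\Bigl(|y+A|^{\alpha/(1-\alpha)}\bI_{y<-A}+|y-B|^{\alpha/(1-\alpha)}\bI_{y>B}\Bigr),
\ean
and the chain rule yields $G'(x)=|G(x)|^\alpha$ off the finite exceptional set where $(1-\alpha)x+(X_0)^{1-\alpha}\in\{-A,B\}$, using the paper's convention that $|0|^\alpha=0$ on the flat piece.

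To invoke Theorem~\ref{t:ito} it remains to show that $G'$ is locally square integrable. Away from the two junction points $G'$ is continuous, so the only issue is near those points, where $G'$ behaves like $|y|^{\alpha/(1-\alpha)}$ after an affine shift. The integral $\int_0^1 y^{2\alpha/(1-\alpha)}\,\di y$ is finite iff $2\alpha/(1-\alpha)>-1$, i.e.\ iff $\alpha>-1$, which is precisely the standing hypothesis. Theorem~\ref{t:ito} then yields
\ban
X^{A,B}_t=G(0)+\int_0^t G'(B_s)\,\di B_s+\tfrac12[G'(B),B]_t=X^{A,B}_0+\int_0^t |X^{A,B}_s|^\alpha\,\di B_s+\tfrac12\bigl[|X^{A,B}|^\alpha,B\bigr]_t,
\ean
which rearranges into the Stratonovich form \eqref{e:1}.

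There is no genuine obstacle. The only subtle point is the behaviour of $G'$ at the two edges of the flat region: for $\alpha\ge 0$ the derivative vanishes there, whereas for $\alpha<0$ it explodes like $|y|^{\alpha/(1-\alpha)}$ with negative exponent. The hypothesis $\alpha>-1$ is tuned exactly so that this singularity remains in $L^2_{\mathrm{loc}}$, which is the sole regularity demand of Theorem~\ref{t:ito}; the rest of the argument is a direct transcription of the proof of Theorem~\ref{t:exists}.
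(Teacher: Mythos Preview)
Your proposal is correct and follows exactly the approach the paper intends: the paper explicitly states that the proofs for $X'$ and $X^{A,B}$ ``are the same as for $X^0$ and therefore omitted,'' i.e., one writes the process as $G(B)$ for an absolutely continuous $G$ with $G'=|G|^\alpha\in L^2_{\mathrm{loc}}$ and applies Theorem~\ref{t:ito}. Your verification of absolute continuity, the identity $G'(x)=|G(x)|^\alpha$, and the $L^2_{\mathrm{loc}}$ condition (via $2\alpha/(1-\alpha)>-1\Leftrightarrow\alpha>-1$) are all accurate.
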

%\begin{proof}
%The solution part is the same as in Theorem~\ref{t:exists}. Concerning the Markov property, if it were true, the times spent by $X^{A,B}$ at zero would be exponentially distributed, which is obviously not the case. 
%\end{proof}

\section{Solutions spending zero time at $0$}\label{sec:0@0}

The property of spending zero time at $0$ is known to be crucial to guarantee uniqueness, see 
e.g.\ \cite{Beck-73} for the deterministic differential equations and \cite{BassBC-07,AryasovaP-11} for the stochastic case. We will need also a concept of a weak solution to \eqref{e:1}.

\begin{dfn}
A weak solution of \eqref{e:1} is a pair $(\widetilde X, \widetilde B)$ of adapted continuous processes on a 
stochastic basis $(\widetilde \Omega,\widetilde\rF,\widetilde\bF,\widetilde\P)$ such that
\begin{enumerate}
\item
$\widetilde B$ is a standard Brownian motion on $(\widetilde \Omega,\widetilde\rF,\widetilde\bF,\widetilde\P)$;
\item 
for any $t\geq 0$, the It\^o integral $\int_0^t |\widetilde X_s|^\alpha \di\widetilde B_s$ and the quadratic covariation 
$[|\widetilde X|^\alpha,\widetilde B]_t$ exist;
\item 
for any $t\geq 0$, 
$$
\widetilde X_t=X_0+\int_0^t |\widetilde X_s|^\alpha\circ \di \widetilde B_s
$$
 holds $\P$-a.s.
\end{enumerate}
\end{dfn}
\begin{dfn}
\label{d:00}
A process $X$ is said to spend zero time at $0$ if 
for each $t\geq 0$
\ban
\label{e:00}
\int_0^t \bI_{\{0\}}(X_s)\,\di s=0 \quad \P\text{-a.s.}
\ean
\end{dfn}

In order to define solutions to \eqref{e:1}, different from the benchmark solution \eqref{e:benchmark} and spending 
zero time at $0$, recall the notion of skew Brownian motion. 
\chng{
For $\theta\in[-1,1]$,  the skew Brownian motion $B^\theta=B^\theta(x)$ starting at $x\in\bR$ is the unique solution of the SDE
\ba
\label{e:sdesbm}
B_t^\theta=x+B_t+\theta L_t(B^\theta),
\ea
where $L(B^\theta)$ is the symmetric local time of $B^\theta$ at $0$; 
see \cite{HShepp-81} and \cite[Section 5]{Lejay-06}. 
}
Roughly speaking, the process $B^\theta$ behaves like a standard Brownian motion outside of zero. At zero its decides to evolve in the 
positive or negative directions independently of the past (the strong Markov property) with the ``flipping'' probabilities
$\beta_\pm=\frac{1\pm\theta}{2}$. 
\chng{
For the initial value $x=0$ and $\theta=0$, $B^0\equiv B$, and 
for $\theta=\pm 1$, the solution to the equation \eqref{e:sdesbm} is a reflected Brownian motion starting at zero and can be 
written explicitly, namely
\ban
B^1&=(B^1_t=B_t-\min_{s\leq t}B_s)_{t\geq 0}\stackrel{\di }{=}(|B_t|)_{t\geq 0},\\
B^{-1}&=(B^{-1}_t=B_t-\max_{s\leq t}B_s)_{t\geq 0}\stackrel{\di }{=}(-|B_t|)_{t\geq 0}.
\ean
}
A complete account on the properties of the skew Brownian motion can be found in 
\cite{Lejay-06}.

First we describe the law of the absolute value of a weak solution that spends zero time at $0$. 

\begin{thm}
\label{t:wu}
Let $\alpha\in (-1,1)$, and
let $X$ be a weak solution of \eqref{e:1} started at $X_0$ such that $X$ spends zero time at $0$. Then
the law of the process $Z=\bigl(\frac{1}{1-\alpha}|X_t|^{1-\alpha}\bigr)_{t\geq 0}$ coincides with the law of a reflected Brownian motion 
started at $\frac{1}{1-\alpha}|X_0|^{1-\alpha}$.
\end{thm}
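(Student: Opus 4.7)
I would identify the law of $Z$ with that of reflected Brownian motion by constructing a pathwise Skorokhod decomposition
\ban
Z_t = z_0 + \beta_t + A_t,\qquad z_0 := \tfrac{1}{1-\alpha}|X_0|^{1-\alpha},
\ean
where $\beta$ is a standard Brownian motion and $A$ is a continuous nondecreasing process with $A_0 = 0$ whose Stieltjes measure is supported on $\{t: Z_t = 0\}$. Since the Skorokhod reflection map is pathwise (hence law-preserving), this characterizes the law of $Z$ as that of reflected Brownian motion started at $z_0$.

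\textbf{Carrying out the steps.} First, set $\beta_t := \int_0^t \sign(X_s)\,\di B_s$, which is a well-defined It\^o integral since $\sign(X)$ is bounded and progressive. Because $X$ spends zero time at $0$,
\ban
[\beta]_t = \int_0^t \sign^2(X_s)\,\di s = \int_0^t \bI_{\{X_s\ne 0\}}\,\di s = t,
\ean
so by L\'evy's characterization $\beta$ is a standard Brownian motion. Next, apply a change-of-variable formula to $Z_t = F(X_t)$ with $F(x)=|x|^{1-\alpha}/(1-\alpha)$. Since $F'(x)|x|^\alpha = \sign(x)$ off the origin, the Stratonovich chain rule should give
\ban
\di Z_t = F'(X_t)\circ \di X_t = \sign(X_t)\circ \di B_t = \sign(X_t)\,\di B_t + \tfrac12\,\di[\sign(X),B]_t,
\ean
that is, $Z_t = z_0 + \beta_t + A_t$ with $A_t := \tfrac12[\sign(X),B]_t$. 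Finally, approximate $\sign$ by smooth odd monotone functions $\phi_\e$ with $\phi_\e'\ge 0$ concentrated near $0$; passing to the u.c.p.\ limit in $[\phi_\e(X),B]$, the positivity of $\phi_\e'$ yields nondecreasingness of $A$, while its concentration near $0$ forces the Stieltjes support of $A$ to lie on $\{X=0\} = \{Z=0\}$. Skorokhod's lemma then identifies $(Z,A)$ with the pathwise reflection of $z_0 + \beta$ and gives the claim.

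\textbf{Main obstacle.} The crux is rigorously justifying the Stratonovich chain rule applied to the cusped function $F$ at the weak solution $X$: for $\alpha\in(0,1)$, $F'$ blows up at $0$ and $X$ is only guaranteed to satisfy the defining Stratonovich equation, not to be a classical semimartingale in the textbook sense. This places the computation outside classical It\^o calculus and demands a generalized It\^o formula in the spirit of Theorem~\ref{t:ito}, together with the existence of the brackets $[\sign(X),B]$ and $[|X|^\alpha,B]$, and a Tanaka-type identification of $[\sign(X),B]$ as a local-time process supported on $\{X=0\}$. Supplying these ingredients is precisely the technical content highlighted in the introduction as the paper's central contribution and is carried out in Section~\ref{s:Y}.
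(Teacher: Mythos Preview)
Your Skorokhod-decomposition approach is genuinely different from the paper's, and as written it has a real circularity. The paper does \emph{not} prove Theorem~\ref{t:wu} by constructing $Z_t=z_0+\beta_t+A_t$; instead it invokes Varadhan's martingale-problem characterization of reflected Brownian motion and verifies the required martingale property only for test functions $f\in C^2_b$ that vanish near $0$. For such $f$, the composite $g(x)=f\bigl(\tfrac{1}{1-\alpha}|x|^{1-\alpha}\bigr)$ is $C^2$ and vanishes near $0$, and Lemma~\ref{l:ito} (an It\^o formula proved from scratch in Appendix~\ref{a:ito} using only the definition of weak solution, with all singular behaviour at $0$ cut off) yields directly that $f(Z_t)-f(Z_0)-\tfrac12\int_0^t f''(Z_s)\,\di s$ is a martingale. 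No bracket involving $\sign(X)$ or the singular derivative $F'(x)=(x)^{-\alpha}$ is ever needed.

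The gap in your proposal is twofold. First, the reference to Section~\ref{s:Y} is misplaced: that section establishes the existence of $[f(B^\theta),B]$ for the \emph{specific} skew Brownian motion $B^\theta$ built from $B$, not for an arbitrary weak solution $X$. In the paper's logical order, the identification of $X$ with a functional of $B^\theta$ is Theorem~\ref{t:ws}, which \emph{uses} Theorem~\ref{t:wu}; invoking Section~\ref{s:Y} to prove Theorem~\ref{t:wu} would be circular. Second, the generalized It\^o formula of Theorem~\ref{t:ito} requires $F'\in L^2_{\mathrm{loc}}$, but $F'(x)=(x)^{-\alpha}$ fails this for $\alpha\in[\tfrac12,1)$; and for a general weak solution you do not know a priori that $[|X|^\alpha,B]$ has bounded variation, so $X$ need not be a classical semimartingale and the chain rule $\di Z=F'(X)\circ\di X$ has no available justification. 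Your approximation argument for the monotonicity and support of $A=\tfrac12[\sign(X),B]$ likewise presupposes semimartingale calculus for $X$ that is not at hand. The paper's device---restrict to test functions vanishing near $0$ so that every function in sight is smooth where it matters---is precisely what lets one avoid all of this.
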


Having the law of $|X|$ in hand we will describe all possible laws of the solution $X$ itself. Essentially we look for a process which
behaves as a Brownian motion outside of $0$ and spends zero time at $0$. 
It is the skew Brownian motion $B^\theta$ which comes to mind first as an example of a process different from $B$ and $|B|$ which satisfies these 
conditions.

The skew Brownian motion is a homogeneous strong Markov process however it is not the unique process whose absolute 
value is distributed like a 
reflected Brownian motion. 

\ansref{2}{Indeed, one can construct} the so-called variably skewed Brownian motion with a 
variable skewness parameter $\theta\colon \bR\to (-1,1)$ as a solution to the SDE 
\ban
B^{\Theta}_t=x+ B_t+\Theta(L_t(B^{\Theta})),\quad t\geq 0,\quad x\in\bR,
\ean
where $\Theta(x) = \int_0^x \theta(y)\di y$. \chng{This process 
with $|B^{\Theta}|\stackrel{\di}{=}|B|$ (see \cite[Lemma 2.1]{BBKM-00}); however, if $\theta$ is non-constant, $B^\Theta$ is not Markov as the skewness parameter depends on the value of local time.

On the other hand, \cite{etore2012existence} showed that the \emph{inhomogeneous skew Brownian motion} which is a unique strong solution 
of the SDE
\ban
B^{\beta}_t=x+  B_t+\int_0^t \beta(s)\,\di L_s(B^{\beta}),\quad t\geq 0,
\ean
with a deterministic Borel function $\beta\colon [0,\infty)\to[-1,1]$, is an inhomogeneous strong Markov process, and $|B^\beta|\stackrel{\di}{=}|B|$
for $x=0$ (see also \cite{weinryb1983etude}).
}

To exclude these processes we restrict ourselves to the case of homogeneous strong Markov solutions.
\chng{
\begin{thm}
\label{t:ws}
Let $\alpha\in (-1,1)$, and
let $(\widetilde X,\widetilde B)$ be a weak solution of \eqref{e:1} such that $\widetilde X$ is a 
homogeneous strong Markov process spending zero time at $0$. Then there exists 
$\theta\in [-1,1]$
such that
\be \label{e:weakbtheta}
\widetilde X =\Big((1-\alpha)\widetilde B^\theta\Big)^\frac{1}{1-\alpha}
\ee
with a $\theta$-skew Brownian motion $\widetilde B^\theta$, which solves
\be
\label{e:tildeB}
\widetilde B^\theta_t =\frac{1}{1-\alpha}(X_0)^{1-\alpha}+ \widetilde B_t + \theta L_t\bigl(\widetilde B^\theta\bigr),\quad t\ge 0.
\ee  
Moreover, 
$\widetilde X$ is also a strong solution to 
\be\label{e:tildesde}
\di\widetilde{X}_t = \bigl|\widetilde{X}_t\bigr|^\alpha \circ \di \widetilde B_t,\quad t\ge 0. 
\ee
\end{thm}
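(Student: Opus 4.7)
The plan is to reverse-engineer the skew Brownian motion from $\widetilde X$ and then verify that it is driven by the given $\widetilde B$. First I would set
$\widetilde B^\theta_t := \frac{1}{1-\alpha}(\widetilde X_t)^{1-\alpha}$
so that the representation $\widetilde X_t = \bigl((1-\alpha)\widetilde B^\theta_t\bigr)^{1/(1-\alpha)}$ holds by construction. Since the sign-preserving signed-power map $x\mapsto \frac{1}{1-\alpha}(x)^{1-\alpha}$ is a homeomorphism of $\bR$ fixing the origin, $\widetilde B^\theta$ inherits continuity, the homogeneous strong Markov property, and the zero-occupation-time property from $\widetilde X$, starting at $\frac{1}{1-\alpha}(X_0)^{1-\alpha}$.

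Next, Theorem~\ref{t:wu} shows that $|\widetilde B^\theta|$ has the law of a reflected Brownian motion starting at $|\widetilde B^\theta_0|$. Combined with the homogeneous strong Markov property, the classical characterization of skew Brownian motion as the only homogeneous strong Markov extension of reflected Brownian motion (see \cite[Section~5]{Lejay-06}) yields some $\theta \in [-1,1]$ such that $\widetilde B^\theta$ is a $\theta$-skew Brownian motion. In particular $\widetilde B^\theta$ is a semimartingale with decomposition
\ban
\widetilde B^\theta_t = \widetilde B^\theta_0 + W_t + \theta L_t(\widetilde B^\theta)
\ean
for some Brownian motion $W$ on the underlying stochastic basis.

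The main obstacle is to identify $W$ with $\widetilde B$, i.e.\ to verify~\eqref{e:tildeB}. For this I would apply the generalized change-of-variables formula to $F(\widetilde B^\theta) = \widetilde X$, where $F(y) = ((1-\alpha)y)^{1/(1-\alpha)}$; the derivative $F'(y) = |(1-\alpha)y|^{\alpha/(1-\alpha)}$ is locally square integrable throughout $\alpha \in (-1,1)$. Applicability of the Stratonovich chain rule here hinges on the existence of the bracket $[F'(\widetilde B^\theta), W]$, which is precisely the technical result proved in Section~\ref{s:Y}. Since $L(\widetilde B^\theta)$ is supported on $\{\widetilde B^\theta = 0\}$ where $F'$ vanishes for $\alpha>0$ (and the case $\alpha\le 0$ forces $\theta=0$, because the nonzero local-time contribution $\theta F'(0)L_t(\widetilde B^\theta)$ is divergent and cannot be matched by the Stratonovich integral), the formula yields the representation $\widetilde X_t = X_0 + \int_0^t |\widetilde X_s|^\alpha \circ dW_s$. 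Comparing with the original Stratonovich SDE and using that $\widetilde X$ spends zero time at $0$ (hence $|\widetilde X|^\alpha>0$ for Lebesgue-a.e.\ $s$) forces $W = \widetilde B$.

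Finally, since the $\theta$-skew Brownian motion is a strong solution of its defining SDE~\eqref{e:sdesbm}, $\widetilde B^\theta$ is adapted to the augmented natural filtration of $\widetilde B$, so $\widetilde X = F(\widetilde B^\theta)$ is too; existence of the It\^o integral and the quadratic covariation required in~\eqref{e:tildesde} has already been established in the course of the generalized chain rule, so $\widetilde X$ is a strong solution. The hardest step is unambiguously the invocation of the extended It\^o/Stratonovich formula for the irregular function $F$ applied to the skew Brownian motion $\widetilde B^\theta$ driven by $\widetilde B$, which crucially relies on the paper's new result on the existence of $[f(B^\theta),B]$ for $f\in L^2_{\mathrm{loc}}$.
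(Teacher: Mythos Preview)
Your overall architecture matches the paper's: define $\widetilde B^\theta := \frac{1}{1-\alpha}(\widetilde X)^{1-\alpha}$, argue it is a $\theta$-skew Brownian motion for some $\theta$, extract its driving Brownian motion $W$, and identify $W$ with $\widetilde B$. However, the execution of the identification step and the assessment of what is needed diverge from the paper in an important way.

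The paper does \emph{not} use Theorem~\ref{t:cov} to prove Theorem~\ref{t:ws}. Instead, it relies solely on Lemma~\ref{l:ito} (the It\^o formula for $C^2$ functions $g$ vanishing on a neighbourhood of $0$). Applying Lemma~\ref{l:ito} to the weak solution yields
\[
g(\widetilde X_t)=g(X_0)+\int_0^t g'(\widetilde X_s)\,|\widetilde X_s|^{\alpha}\,\di \widetilde B_s+\text{(explicit drift)},
\]
while applying the ordinary semimartingale It\^o formula to $\widetilde B^\theta$ (using $\widehat B_t=\widetilde B^\theta_t-\widetilde B^\theta_0-\theta L_t(\widetilde B^\theta)$ and that $g$ vanishes near $0$ so the local-time term drops) gives the same expression with $\widehat B$ in place of $\widetilde B$. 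The drift terms are identical Lebesgue integrals, so one obtains directly
\[
\int_0^t g'(\widetilde X_s)\,|\widetilde X_s|^{\alpha}\,\di \widetilde B_s=\int_0^t g'(\widetilde X_s)\,|\widetilde X_s|^{\alpha}\,\di \widehat B_s,
\]
and letting $g_n'(x)|x|^{\alpha}\uparrow \bI_{x\neq 0}$ together with the zero-occupation condition gives $\widetilde B=\widehat B$. This works uniformly for all $\alpha\in(-1,1)$ with no case distinction.

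Your route --- applying a generalized It\^o formula to the irregular $F$ itself and then ``comparing Stratonovich integrals'' --- has a genuine gap. From
\[
\int_0^t |\widetilde X_s|^{\alpha}\circ\di W_s=\int_0^t |\widetilde X_s|^{\alpha}\circ\di \widetilde B_s
\]
you cannot immediately read off $W=\widetilde B$: the two quadratic covariations $[|\widetilde X|^{\alpha},W]$ and $[|\widetilde X|^{\alpha},\widetilde B]$ are not a priori equal (you do not yet know that $\widetilde B^\theta-\widetilde B$ has finite variation), and without that you cannot isolate the It\^o parts. The paper sidesteps this completely by localizing away from $0$, where the brackets become honest, equal Lebesgue integrals. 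Relatedly, your parenthetical that $\alpha\le 0$ ``forces $\theta=0$'' mixes in the content of Theorem~\ref{t:main}; in the proof of Theorem~\ref{t:ws} the value of $\theta$ is handed to you by the Markov structure (the paper proves this as Proposition~\ref{p:weak} rather than citing it), and the identification $W=\widetilde B$ must be carried out for that $\theta$, whatever it is. So the ``hardest step'' here is not Theorem~\ref{t:cov} at all, but the localized comparison via Lemma~\ref{l:ito}.
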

}
For $\theta=1$, the skew Brownian motion $B^1$ is a non-negative reflected Brownian motion. 
\cite{AryasovaP-11} studied non-negative solutions of a singular SDE written in the weak form.  
By \cite[Theorem 1]{AryasovaP-11} there
exists a strong solution to equation \eqref{e:1} (in the weak form) with initial condition $X_0\geq 0$ spending zero time at the
point $0$ and the strong uniqueness holds in the class of solutions spending zero time at $0$. Of course it is equal to
the solution $X^1$ which can be determined explicitly
as
\chng{\ban
X^1_t=\left( (1-\alpha) B_t +  X_0^{1-\alpha} + \Big((1-\alpha)\min_{s\leq t}B_s +   X_0^{1-\alpha}\Big)_- \right)^\frac{1}{1-\alpha},\quad t\geq 0,
\ean
where $x_- = - \min(x,0)$ denotes the negative part of $x$.}

Finally we show the existence of strong solutions different from the benchmark solution \eqref{e:benchmark} and characterize all solutions which are homogeneous  strong Markov processes spending zero time at $0$.

\begin{thm}
\label{t:main}
1. Let $\alpha\in (0,1)$ and $\theta\in [-1,1]$. 
Let $X_0\in\bR$ and let $B^\theta$ be the unique strong solution of the SDE
\ben
B^\theta_t =\frac{1}{1-\alpha}(X_0)^{1-\alpha}+ B_t + \theta L_t\bigl(B^\theta\bigr),\quad t\ge 0.
\een
Then 
\be
\label{e:xtheta}
X^\theta_t=\left((1-\alpha)B^\theta_t\right)^\frac{1}{1-\alpha}
\ee
 is a 
strong solution of \eqref{e:1} which is a homogeneous strong Markov process spending zero time at $0$.

Moreover, $X^\theta$ is the unique
strong solution of \eqref{e:1} which is a homogeneous strong Markov process spending zero time at $0$ and such that
\ban
\P(X^\theta_t\geq 0\mid X_0 = 0)=\beta_+=\frac{1+\theta}{2},\quad t>0.
\ean
2. Let $\alpha\in (-1,0]$. Then the benchmark solution $X^0_t=\big((1-\alpha)B_t+(X_0)^{1-\alpha}\big)^\frac{1}{1-\alpha}$ is the unique
strong solution of \eqref{e:1} which is a homogeneous strong Markov process spending zero time at $0$.
\end{thm}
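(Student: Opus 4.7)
The plan is to base existence on an Itô--Tanaka-type change-of-variables formula applied to $F(B^\theta)$, where $F(x)=((1-\alpha)x)^{1/(1-\alpha)}$ is the antiderivative whose derivative $f$ satisfies $f(B^\theta)=|X^\theta|^\alpha$, and to derive uniqueness by combining Theorem~\ref{t:ws} with strong uniqueness of the skew Brownian SDE~\eqref{e:sdesbm}. The sign of $\alpha$ enters only through whether an accompanying local-time term vanishes or not, and this is precisely what separates Parts~1 and~2. The main obstacle will be interpreting this local-time integral when $f$ is discontinuous at $0$ (the case $\alpha=0$) or singular at $0$ (the case $\alpha\in(-1,0)$); the change-of-variables identity itself is made possible by the quadratic-covariation existence result of Section~\ref{s:Y}.

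For the existence statement in Part~1, I set $f(x)=|(1-\alpha)x|^{\alpha/(1-\alpha)}$, so that $X^\theta=F(B^\theta)$ and $|X^\theta|^\alpha=f(B^\theta)$. Using the bracket $[f(B^\theta),B]$ supplied by Section~\ref{s:Y} together with $B^\theta=\frac{1}{1-\alpha}(X_0)^{1-\alpha}+B+\theta L(B^\theta)$, an Itô--Tanaka-type identity for $B^\theta$ applied to $F$ should yield
\ban
X^\theta_t-X_0
=\int_0^t |X^\theta_s|^\alpha\,\di B_s
+\theta\int_0^t f(B^\theta_s)\,\di L_s(B^\theta)
+\tfrac12\bigl[|X^\theta|^\alpha,B\bigr]_t.
\ean
For $\alpha\in(0,1)$, $f$ is continuous at $0$ with $f(0)=0$, and since $\di L(B^\theta)$ is supported on $\{B^\theta=0\}$, the middle term vanishes, so $X^\theta$ solves \eqref{e:1}. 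Adaptedness to the filtration of $B$ follows from strong solvability of \eqref{e:sdesbm}, while the strong Markov property and the zero-time-at-$0$ property transfer from $B^\theta$ to $X^\theta$ via the continuous strictly increasing bijection $F$, which fixes $0$.

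For uniqueness in Part~1, let $X$ be a strong solution which is a homogeneous strong Markov process spending zero time at $0$. Theorem~\ref{t:ws} furnishes $\theta'\in[-1,1]$ and a $\theta'$-skew Brownian motion $B^{\theta'}$ driven by $B$ solving \eqref{e:tildeB}, such that $X=((1-\alpha)B^{\theta'})^{1/(1-\alpha)}$. The sign of $X$ matches that of $B^{\theta'}$, and the probability that a skew Brownian motion started at $0$ is positive at a given positive time equals $(1+\theta')/2$, so the stipulated value $(1+\theta)/2$ forces $\theta'=\theta$; strong uniqueness of \eqref{e:sdesbm} then identifies $B^{\theta'}$ with $B^\theta$ and hence $X$ with $X^\theta$.

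For Part~2, Theorem~\ref{t:ws} again writes any such solution as $X=((1-\alpha)B^\theta)^{1/(1-\alpha)}$ for some $\theta\in[-1,1]$, and the same change of variables gives
\ban
X_t-X_0-\int_0^t |X_s|^\alpha\,\di B_s-\tfrac12\bigl[|X|^\alpha,B\bigr]_t
=\theta\int_0^t f(B^\theta_s)\,\di L_s(B^\theta).
\ean
For the Stratonovich equation to hold the right-hand side must be zero. When $\alpha=0$ the symmetric value of $f$ at $0$ equals $1$ (rather than the convention $0$ in $|0|^0$), so the local-time integral reduces to $\theta L_t(B^\theta)$, strictly positive for $t>0$ whenever $\theta\ne 0$; when $\alpha\in(-1,0)$ the function $f$ blows up at $0$ and no reasonable value of the local-time integral is zero. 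In either case, only $\theta=0$ is admissible, giving $B^0=B$ and $X=X^0$.
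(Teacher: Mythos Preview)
Your proposal is correct and follows the same strategy as the paper: obtain a change-of-variables identity for $F(B^\theta)$ with the bracket supplied by Theorem~\ref{t:cov}, observe that for $\alpha\in(0,1)$ the local-time contribution vanishes because $f$ is continuous with $f(0)=0$, and deduce uniqueness from Theorem~\ref{t:ws} together with strong uniqueness for~\eqref{e:sdesbm}. The one place where the paper is more explicit is the derivation of the identity itself: rather than asserting it directly, the paper approximates $f$ by $C^1$ functions $h_m$ with $h_m(0)=0$, applies the classical semimartingale It\^o formula to $H_m(B^\theta)$, and passes to the limit using Theorem~\ref{t:cov} and the It\^o isometry. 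This distinction is more than cosmetic for $\alpha\le 0$: under the paper's convention $f(0)=|0|^{\alpha/(1-\alpha)}=0$, your written term $\theta\int_0^t f(B^\theta_s)\,\di L_s(B^\theta)$ is literally zero, and the obstruction you correctly identify via the ``symmetric value'' only becomes visible as $\lim_m\int_0^t h_m(B^\theta_s)\,\di L_s(B^\theta)$, which equals $L_t(B^\theta)$ for $\alpha=0$ and diverges to $+\infty$ for $\alpha\in(-1,0)$ --- precisely the computation the paper carries out.
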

\begin{rem}
By Theorem~\ref{t:ws}, a similar uniqueness result also holds for weak solutions. 
\end{rem}
\chng[R1: discussion on sub/super-diffusivity added]{
The explicit form of the solutions \eqref{e:xtheta} allows to study their long time behaviour easily. Setting for simplicity $X_0=0$
we recall the transition probability density of the skew Brownian motion (see, e.g.\ \cite[Eq.\ (17)]{Lejay-06}) and find 
%that 
%\ban
%\E X^\theta_t&
%%=\frac{(1-\alpha)^{\frac{1}{1-\alpha}}}{\sqrt{2\pi t}} 
%%\int |y|^\frac{1}{1-\alpha}\sign y\cdot \ex^{-\frac{y^2}{2t}}\Big[ 1 +\theta \sign y \Big]\,\di y
%%\\
%%%&=\theta\frac{(1-\alpha)^{\frac{1}{1-\alpha}}}{\sqrt{2\pi t}} 
%%%\int |y|^\frac{1}{1-\alpha} \ex^{-\frac{y^2}{2t}}\,\di y\\
%%&
%= \theta t^{\frac{1}{2(1-\alpha)}}\cdot 2^{\frac{1}{2(1-\alpha)}} (1-\alpha)^{\frac{1}{1-\alpha}} \pi^{-\frac12} \Gamma\Bigl(\frac{2-\alpha}{2(1-\alpha)}\Bigr),
%\ean
%\ban
%\E |X^\theta_t|^2 &
%%=\frac{(1-\alpha)^{\frac{2}{1-\alpha}}}{\sqrt{2\pi t}} 
%%\int |y|^\frac{2}{1-\alpha} \cdot \ex^{-\frac{y^2}{2t}}\Big[ 1 +\theta \sign y \Big]\,\di y\\
%%&=\frac{(1-\alpha)^{\frac{2}{1-\alpha}}}{\sqrt{2\pi t}} 
%%\int |y|^\frac{2}{1-\alpha} \ex^{-\frac{y^2}{2t}}\,\di y\\
%%&
%=t^{\frac{1}{1-\alpha}}\cdot 2^{\frac{1}{1-\alpha}} (1-\alpha)^{\frac{2}{1-\alpha}} \pi^{-\frac12} \Gamma\Bigl(\frac{3-\alpha}{2(1-\alpha)}\Bigr).
%\ean
%Hence, 
the mean square displacement %for large times can be found as
\ba
\operatorname{Var} (X_t^\theta) = 
t^{\frac{1}{1-\alpha}}\cdot \bigl(2 (1-\alpha)^2\bigr)^{\frac{1}{1-\alpha}} 
\Big[\pi^{-\frac12} \Gamma\Bigl(\frac{3-\alpha}{2(1-\alpha)}\Bigr) - \theta^2 \Gamma\Bigl(\frac{2-\alpha}{2(1-\alpha)}\Bigr)^2\Big].
\ea
Hence, $X^\theta$ demonstrates the diffusive behaviour $\operatorname{Var} X_t^\theta\sim t$ for $\alpha=0$, as the diffusion coefficient is a.e.\ constant. For
$\alpha\in (0,1)$, the growing diffusion coefficient leads to the superdiffusion; for $\alpha\in (-1,0)$, the diffusion coefficient decreases to zero at infinity, hence we have a subdiffusion. Such behaviour was recovered in \cite{CherstvyCM-13}, where one can find a 
discussion on the physical interpretation.}

The crucial part of the proof of Theorem \ref{t:main} is the existence of the quadratic variation $[|X^\theta|^\alpha,B]$ which follows from the 
following Theorem which is interesting on its own.
\begin{thm}
\label{t:cov}
Let $f\in L^2_{\mathrm{loc}}(\bR)$ and let the $\theta$-skew Brownian motion $B^\theta$, $\theta\in [-1,1]$, 
be the unique strong solution of the SDE \eqref{e:sdesbm}.
Then the quadratic variation
\ben
[f(B^\theta),B]_t = \lim_{n\to\infty} \sum_{t_k\in D_n,t_k< t} 
 \big(f(B^\theta_{t_k})-f(B^\theta_{t_{k-1}})\big)(B_{t_k}-B_{t_{k-1}})
\een
exists as a limit in u.c.p.
Moreover, let $\{h_m\}_{m\geq 1}$ be a sequence of continuous functions such that for each $A>0$
\ba
\lim_{m\to \infty}\int_{-A}^A |h_m(x)-f(x)|^2\,\di x=0.
\ea
% $f_n\to f$, $n\to\infty$, in  $L^2_\mathrm{loc}(\bR^2)$, $n\to\infty$. 
Then
\ben
[h_m(B^\theta),B]_t\to  [f(B^\theta),B]_t,\quad m\to\infty,
\een
in u.c.p.
\end{thm}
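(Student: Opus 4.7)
The plan is to adapt the time-reversion technique of \cite{FPSh-95} (cf.\ Theorem~\ref{t:ito}) to the skew Brownian motion setting and combine it with an approximation argument. The overall scheme has three steps.

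\textbf{Step 1 (smooth $f$).} For $f\in C^1(\bR)$ the covariation $[f(B^\theta),B]$ exists in a transparent way. From \eqref{e:sdesbm} we have $B = B^\theta - \frac{1}{1-\alpha}(X_0)^{1-\alpha} - \theta L(B^\theta)$, and since $L(B^\theta)$ is continuous of bounded variation, the quadratic covariation of any continuous semimartingale with it vanishes. Hence
\begin{equation*}
[f(B^\theta),B]_t = [f(B^\theta),B^\theta]_t = \int_0^t f'(B^\theta_s)\,\di s,
\end{equation*}
the last identity coming from It\^o's formula for $B^\theta$ combined with $\langle B^\theta\rangle_t = t$.

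\textbf{Step 2 (key $L^2$ estimate).} The technical heart of the proof is a uniform-in-partition bound of the form: for every $A>0$ and $t>0$ there exists $C=C(A,t)$ such that for every $g\in L^2(\bR)$ with support in $[-A,A]$ and every partition $D_n$,
\begin{equation*}
\E\Big[\sup_{s\le t}\Big|\sum_{t_k\in D_n,\,t_k<s}\bigl(g(B^\theta_{t_k})-g(B^\theta_{t_{k-1}})\bigr)(B_{t_k}-B_{t_{k-1}})\Big|^2\Big]\le C\int_{-A}^A g(x)^2\,\di x.
\end{equation*}
Split the partition sum as
\begin{equation*}
\sum_k g(B^\theta_{t_k})(B_{t_k}-B_{t_{k-1}}) \;-\; \sum_k g(B^\theta_{t_{k-1}})(B_{t_k}-B_{t_{k-1}}).
\end{equation*}
The second (forward) sum consists of orthogonal martingale increments in the natural filtration of $B$, so the BDG inequality combined with the occupation-times formula for $B^\theta$ and the standard Gaussian-type bound on its transition density yields the $L^2$ estimate. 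The first (backward) sum is the main obstacle: apply time reversal, setting $\widehat{B}_s := B^\theta_{T-s}$ on a fixed interval $[0,T]$ with $T\ge t$. The reversed process is a continuous semimartingale in its natural filtration, with an explicit drift expressed through the logarithmic spatial gradient of the transition density of $B^\theta$; under this transformation the backward sum becomes a forward-type sum for $\widehat{B}$ and can be bounded by the same BDG-plus-occupation-time argument.

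\textbf{Step 3 (passage to the limit).} Given $f\in L^2_{\mathrm{loc}}(\bR)$, choose continuous $h_m$ with $\int_{-A}^A|h_m-f|^2\to0$ for every $A>0$; by mollification we may even take $h_m\in C^1$, so Step~1 produces the bracket $[h_m(B^\theta),B]$. Applying Step~2 to $g=h_m-h_\ell$, together with a localisation that cuts off outside $[-A,A]$ and uses $\sup_{s\le t}|B^\theta_s|$ being tight, shows that $\{[h_m(B^\theta),B]\}_m$ is u.c.p.-Cauchy, with a limit that depends only on $f$. This limit is the desired $[f(B^\theta),B]$, and the same estimate yields the stated convergence for arbitrary continuous approximants $h_m$.

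\textbf{Main obstacle.} Everything rests on the backward-sum estimate in Step~2. The singularity of $B^\theta$ at the origin translates into a singularity of $\nabla_x\log p^\theta$, and one must verify that the drift appearing in the time-reversed semimartingale decomposition of $B^\theta$ is sufficiently integrable with respect to the transition density so as to sustain the $L^2$ bound; this is precisely the ``time-reversion technique for Markovian diffusions'' announced in the abstract.
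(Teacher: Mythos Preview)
Your overall three-step scheme (existence for smooth $f$, uniform estimate via a forward/backward splitting with time reversal, approximation) is the right architecture and matches the paper's approach. However, Step~2 as written contains a genuine gap.

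The backward sum you need to control is $\sum_k g(B^\theta_{t_k})(B_{t_k}-B_{t_{k-1}})$, whose increments are those of the \emph{driving} Brownian motion $B$, not of $B^\theta$. Reversing only $B^\theta$ and obtaining its semimartingale decomposition in its own filtration does not by itself tell you that the reversed process $\bar B_s := B_{T-s}$ is a semimartingale with an explicit drift in a filtration rich enough to make $g(\bar B^\theta)$ adapted. What is actually required is the time reversal of the \emph{pair} $(B^\theta,B)$; only then does the backward sum become a genuine forward It\^o sum plus a Lebesgue integral against an explicit drift. The paper does precisely this: it passes to the oscillating Brownian motion $Y^\theta$ (so that the pair $(Y^\theta,B)$ solves a bona fide $2$-dimensional It\^o SDE with piecewise constant coefficients), computes the joint density of $(Y^\theta_t,B_t)$ explicitly, and then verifies the Haussmann--Pardoux martingale problem for the reversed pair by hand (the PDE computation in the appendix). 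This is where most of the work lies, and your sketch skips it.

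A secondary point: the paper does not establish the uniform $L^2$ bound on the full partition sum that you assert in Step~2. It controls the martingale parts (forward and reversed) in $L^2$ via Doob and the It\^o isometry, but the drift contribution from the reversed SDE is handled only in $L^1$, using Cauchy--Schwarz together with moment bounds on the reversed drift $\bar b^y$. This suffices for u.c.p.\ convergence but is weaker than your claimed estimate; you should either downgrade the claim or explain why the drift term obeys an $L^2$ bound, which is not obvious given the singularity of $\bar b^y$ near $s=T$.
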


The proof of this Theorem uses the approach by \cite{FPSh-95}. For $\theta\in(-1,1)\setminus \{0\}$ it is combined with the time reversal technique 
from \cite{haussmann1985time,haussmann1986time}; for $\theta=\pm 1$, we use the time-reversal results for the reflected Brownian motion by \cite{Petit-97}.

% Now the question: we have defined the Stratonovich integral as It\^o $+$ the quadratic covariation, everything under the assumption that the both
% terms exist. Can it happen that none of them exist but the Stratonovich integral still exists 
% (similar to \emph{weak Stratonovich integrals}, see Nourdin et al.)? 
% In other words it should be:
% \ba
% ((1-\alpha) B_t)^\frac{1}{1-\alpha}=\int_0^t |(1-\alpha)B_s|^\alpha\circ\di B_s
% :=\lim_{n\to \infty} \sum_{t_k<t} \frac12\Big(|(1-\alpha)B_{t_{k+1}}|^\alpha+|(1-\alpha)B_{t_{k}}|^\alpha\Big)(B_{t_{k+1}}-B_{t_k})
% \ea
% or
% \ba
% (1-\alpha)^{\alpha-\alpha^2-1} \lim_{n\to\infty}\Bigg(\sum_{t_k<t} \frac{|B_{t_{k+1}}|^\alpha+|B_{t_{k}}|^\alpha}{2}(B_{t_{k+1}}-B_{t_k})\Bigg)^{1-\alpha}
% = B_t
% \ea
% However simulations show convergence for $\alpha<-1$, and no convergence for $\alpha\leq -1$. 
% 
% Maybe we have to consider v.p.\ convergence like
% \ba
% (1-\alpha)^{\alpha-\alpha^2-1}\lim_{\e\downarrow 0} \int_0^t |B_s|^{\alpha}\bI(|B_s|>\e)\circ\di B_s =B_t.
% \ea

% 
% 
% 
% See \cite[Theorem 3.2]{OshimaY-84} and \cite[Section 4]{Cherny-01} for the extentions of the It\^o formula with the help of zero energy
% CAFs and v.p.-integrals.
% 
% 
% For $\alpha\leq -1$, the solution sticks to zero after hitting it. 

\section{On the relation between the Stratonovich and It\^o equations\label{sec:etc}} 

\chng[R2: explanation added]{Recall that a Stratonovich SDE $\di X=f(X)\circ \di B$ with a smooth function $f$ can be rewritten in the It\^o form as
$\di X=f(X)\, \di B +\frac12 f(X)f'(X)\,\di t$ (see, e.g.\ \cite[Chapter 5]{Protter-04}). Although 
for $\alpha\in(-1,1)$ the function $x\mapsto |x|^\alpha$ is not smooth, let us \emph{formally} 
write the Stratonovich SDE \eqref{e:1}  
as an It\^o SDE with irregular/singular coefficients}
\ba
\label{e:ito}
X_t=X_0+\int_0^t |X_s|^\alpha\,\di  B_s+ \frac{\alpha}{2}\int_0^t (X_s)^{2\alpha-1}\,\di s.
\ea
Let us check whether the process $X^\theta$ satisfies this equation. For definiteness, we set $X_0=0$. 

In order to be able to substitute $X^\theta$ into \eqref{e:ito} we have to 
guarantee that the both summands of the SDE \eqref{e:ito} are well defined. Hence, 
for the existence of the It\^o integral we need
\ba
\label{e:ex1}
\int_0^t |X_s^\theta|^{2\alpha}\,\di s\stackrel{\di}{=}\int_0^t |B_s|^\frac{2\alpha}{1-\alpha}\,\di s<\infty\quad \text{a.s.,} 
\ea
and for the existence of the drift term we need
\ba
\label{e:ex2}
\int_0^t |X_s^\theta|^{2\alpha-1}\,\di s\stackrel{\di}{=}\int_0^t |B_s|^\frac{2\alpha-1}{1-\alpha}\,\di s<\infty\quad \text{a.s.} 
\ea
% The occupation times formula \cite[Chapter VI, Corollary 1.6, p.\ 224]{RevuzYor} 
% guarantees that for each $t>0$ and each non-negative Borel function $\Phi$
% \ba
% \int_0^t \Phi(B_s)\,\di s= \int_{-\infty}^\infty \Phi(a)\,\di L^a_t 
% \ea

The Engelbert--Schmidt zero-one law \cite[Theorem 1]{EngelbertSchmidt-81} implies that for a Borel function $\Phi\colon \bR\to [0,+\infty]$ 
\ban
\P\Big(\int_0^t \Phi(B_s)\,\di s<\infty,\ \forall \, t\geq 0 \Big)=1\quad \Leftrightarrow\quad \Phi\in L^1_\text{loc}(\bR),
\ean
and hence \eqref{e:ex1} is satisfied for all $\alpha>-1$, and the drift term \eqref{e:ex2}
exists for $\alpha>0$. This indicates that $X^\theta$ is a solution of \eqref{e:ito} for $\theta\in[-1,1]$ and $\alpha\in(0,1)$.

To extend the existence result to $\alpha\in (-1,0]$, we will consider the drift term in the principal value sense: 
\ba
\label{e:vp}
\text{v.p.}\int_0^t (B_s)^\frac{2\alpha-1}{1-\alpha}\,\di s:=\lim_{\e \downarrow 0} 
\int_0^t (B_s)^\frac{2\alpha-1}{1-\alpha}\cdot \bI(|B_s|> \e)\,\di s.
\ea
The principal value definition is intrinsically 
based on the symmetry of the Brownian motion and hence excludes the skew cases $\theta\neq 0$.
Necessary and sufficient conditions for the existence of Brownian principal value integrals are given in
\cite[Theorem 3.1, p.\ 352]{Cherny-01}. In particular, the integral \eqref{e:vp} is finite if and only if $\alpha>-1$.

This yields that for $\alpha\in (-1,0]$, $X^0$ is the solution of the It\^o SDE 
\ba
\label{e:itovp}
X_t=X_0+\int_0^t |X_s|^\alpha\,\di  B_s+ \frac{\alpha}{2}\cdot \text{v.p.}\int_0^t (X_s)^{2\alpha-1}\,\di s.
\ea

\chng[R2: rewritten]{In their book, \cite{CheEng05} consider singular SDEs in the sense of existence of the Lebesgue integrals \eqref{e:ex1} and \eqref{e:ex2}. 
It follows from \cite[Chapter 5]{CheEng05}, that for $\alpha\leq 0$ the SDE \eqref{e:ito} has a unique solution 
which sticks to $0$ after hitting it. This behaviour seems to contradict the fact that the benchmark solution $X^0$ is a solution to the 
Stratonovich equation which spends zero time at $0$ for $\alpha\in (-1,1)$ due to Theorem \ref{t:exists}.
This contradiction is resolved by taking into account the fact that for $\alpha\in(-1,0]$ the 
noise-induced drift has to be understood in the \emph{principal value} sense \eqref{e:vp} and not as a Lebesque integral.}

\chng[R1: formulation added]{These observations lead to the following theorem.
\begin{thm}
1. For $\alpha\in (0,1)$ and $\theta\in[-1,1]$, the process $X^\theta$ given by \eqref{e:xtheta} is a strong solution of the It\^o SDE \eqref{e:ito}.
2.  For $\alpha\in (-1,0]$, the process $X^0$ given by \eqref{e:benchmark} is a strong solution of the It\^o SDE \eqref{e:itovp}.
\end{thm}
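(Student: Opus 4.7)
My plan is to deduce both parts of the theorem from Theorem~\ref{t:main} combined with the conversion of the Stratonovich integral into an It\^o one. Since Theorem~\ref{t:main} already guarantees that $X^\theta$ solves the Stratonovich equation \eqref{e:1}, the very definition of the Stratonovich integral yields
\ban
X_t^\theta = X_0 + \int_0^t |X_s^\theta|^\alpha\,\di B_s + \tfrac{1}{2}[|X^\theta|^\alpha, B]_t,
\ean
and the whole task reduces to identifying $\frac{1}{2}[|X^\theta|^\alpha, B]_t$ with the drift term in \eqref{e:ito} or \eqref{e:itovp}, respectively.

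The identification proceeds through the representation $|X_t^\theta|^\alpha = f(B_t^\theta)$ with $f(y)=|(1-\alpha)y|^{\alpha/(1-\alpha)}$. Since $2\alpha/(1-\alpha)>-1$ for every $\alpha\in(-1,1)$, the function $f$ lies in $L^2_{\mathrm{loc}}(\bR)$, so Theorem~\ref{t:cov} applies and the bracket exists. I would take a sequence of smooth approximants $\{h_m\}$ converging to $f$ in $L^2_{\mathrm{loc}}$. Because $B^\theta-B=B_0^\theta+\theta L(B^\theta)$ has bounded variation, $[B^\theta,B]_t=[B,B]_t=t$, and the classical It\^o formula gives, for each smooth $h_m$,
\ban
[h_m(B^\theta),B]_t=\int_0^t h_m'(B_s^\theta)\,\di s.
\ean
By Theorem~\ref{t:cov} the left-hand side converges in u.c.p.\ to $[f(B^\theta),B]_t$, so the remaining task is to pass to the limit on the right-hand side. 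A direct computation yields $f'(y)=\alpha\bigl((1-\alpha)y\bigr)^{(2\alpha-1)/(1-\alpha)}$ and hence $f'(B_s^\theta)=\alpha(X_s^\theta)^{2\alpha-1}$.

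For part~(1), the assumption $\alpha>0$ gives $(2\alpha-1)/(1-\alpha)>-1$, so $f'\in L^1_{\mathrm{loc}}(\bR)$; choosing $h_m$ to coincide with $f$ off $[-1/m,1/m]$ with a smooth interpolation inside ensures both $h_m\to f$ in $L^2_{\mathrm{loc}}$ and $h_m'\to f'$ in $L^1_{\mathrm{loc}}$, and the occupation time formula for the semimartingale $B^\theta$ then delivers $\int_0^t h_m'(B_s^\theta)\,\di s\to\alpha\int_0^t(X_s^\theta)^{2\alpha-1}\,\di s$ in probability. For part~(2), $\alpha\in(-1,0]$ and $\theta=0$, so $B^\theta$ is a shifted Brownian motion and $f'$ is odd around $0$ but no longer locally integrable; the Brownian principal value in \eqref{e:vp} nevertheless exists by \cite[Theorem 3.1]{Cherny-01}. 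I would then use a symmetric truncation $h_m'(y)=f'(y)\bI_{\{|y|>1/m\}}$ (smoothed if necessary) so that $\int_0^t h_m'(B_s^0)\,\di s$ is exactly the truncated integral from \eqref{e:vp} and converges to $\alpha\cdot\mathrm{v.p.}\int_0^t(X_s^0)^{2\alpha-1}\,\di s$. The main obstacle is to perform both limits along the \emph{same} approximating sequence: in part~(1) this is routine because both are $L^p_{\mathrm{loc}}$ approximations, whereas in part~(2) the compatibility hinges on choosing the truncation symmetric about the singular point, so that the oddness of $f'$ cancels the divergent contributions and produces precisely the Brownian principal value.
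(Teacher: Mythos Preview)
Your proposal is correct and in fact more rigorous than what the paper offers. The paper does not supply a formal proof of this theorem; it is stated as the upshot of the discussion in Section~\ref{sec:etc}, which only checks, via the Engelbert--Schmidt zero-one law and \cite{Cherny-01}, that the right-hand sides of \eqref{e:ito} and \eqref{e:itovp} are well-defined. The actual identity $\tfrac12[|X^\theta|^\alpha,B]_t=\tfrac{\alpha}{2}\int_0^t (X_s^\theta)^{2\alpha-1}\,\di s$ (resp.\ its principal-value version) is left implicit in the formal Stratonovich--It\^o conversion. Your argument supplies precisely this missing step.

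For part (1), your route via mollification, Theorem~\ref{t:cov}, and the occupation-time formula is clean; a standard mollifier gives simultaneously $h_m\to f$ in $L^2_{\mathrm{loc}}$ and $h_m'\to f'$ in $L^1_{\mathrm{loc}}$, since $f$ is absolutely continuous with $f'\in L^1_{\mathrm{loc}}$ when $\alpha>0$. For part (2), the symmetric-truncation idea is the right one and you correctly flag the synchronisation of the two limits as the crux. One way to bypass the construction altogether is to note that $B^0$ is just a shifted Brownian motion and to invoke directly the extended It\^o formula from \cite{Cherny-01}, which gives $F(W_t)=F(W_0)+\int_0^t F'(W_s)\,\di W_s+\tfrac12\,\mathrm{v.p.}\!\int_0^t F''(W_s)\,\di s$ for $F(x)=\big((1-\alpha)x\big)^{1/(1-\alpha)}$; comparing this with the Stratonovich equation from Theorem~\ref{t:exists} immediately yields the required bracket identity. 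Alternatively, your truncation approach goes through once one uses the joint continuity of Brownian local time in the space variable together with the oddness of $f'$ to show that $\int h_m'(a)L_t^a\,\di a$ and $\int_{|a|>\e}f'(a)L_t^a\,\di a$ have the common limit $\int f'(a)\bigl(L_t^a-L_t^0\bigr)\,\di a$, which converges absolutely because $(2\alpha-1)/(1-\alpha)>-3/2$.
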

}

% 
% 
% Since they also assume that the diffusion coefficient does not vanish at $0$,
% it follows from \cite[Chapter 5]{CheEng05}, that for $\alpha\leq 0$ the SDE \eqref{e:ito} has a unique solution 
% for $X_0\neq 0$ which cannot be extended after it hits $0$, 
% and it has no solution starting at $0$. Since we always assume that the diffusion coefficient and the drift are zero at $0$, 
% our solution should stick to $0$ after hitting it. 
% 
% This behaviour seems to contradict the fact that $X$ is a solution to the 
% Stratonovich equation for $\alpha\in (-1,1)$ due to Theorem \ref{t:ito}.
% This contradiction is resolved by taking into account the fact that for $\alpha\in(-1,0]$ the 
% noise-induced drift has to be understood in the \emph{principal value} sense \eqref{e:vp}.
% 

\section{Proof of Theorem \ref{t:wu}} \label{sec:six}

We use the following characterization of the reflected Brownian  motion, see \cite{Varadhan-RBM}. 

\begin{prp}
Let $(\Omega,\rF,\bF,\P)$ be a filtered probability space. 
A continuous non-negative stochastic process $Z$ is a reflected Brownian motion started at $x$ if and only if
\begin{enumerate}
\item $Z_0 = x$ a.s.;
\item 
$Z$ behaves locally like a Brownian motion on $(0,\infty)$, i.e.\ for any
bounded smooth function $f\colon [0,\infty)\to \bR$ such that $f(x)=0$ 
for $x\in[0,\delta]$ for some $\delta=\delta(f)>0$, the process
\ban
f (Z_t) -f(x) -\frac12 \int_0^t f'' (Z_s)\,\di s
\ean
is a martingale;
\item $Z$ spends a zero time at $0$, i.e.
\ban 
\E \int_0^\infty \bI_{\{0\}} (Z_s)\,\di s = 0.
\ean
\end{enumerate}
\end{prp}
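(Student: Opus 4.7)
For the \emph{necessity} direction, I would realize the reflected Brownian motion started at $x\geq 0$ as $Z_t = |x + B_t|$. Property (1) is immediate, and (3) follows because standard Brownian motion spends zero Lebesgue time at any fixed point. For (2), given bounded smooth $f$ with $f\equiv 0$ on $[0,\delta]$, the even extension $\tilde f(y) := f(|y|)$ is a bounded $C^2$ function on $\bR$ (the flatness near the origin makes the gluing $C^2$ smooth); Itô's formula applied to $\tilde f(x + B_t)$, together with the identity $\tilde f''(y) = f''(|y|)$ and $\tilde f(x+B_t) = f(Z_t)$, yields the desired martingale.

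For the \emph{sufficiency} direction, the strategy is to promote condition (2) to the full Stroock--Varadhan martingale problem characterization of reflected Brownian motion with Neumann boundary at $0$. Concretely, I claim that for every $f \in C_b^2([0,\infty))$ satisfying $f'(0)=0$, the process
\ban
M_t^f := f(Z_t) - f(x) - \tfrac12 \int_0^t f''(Z_s)\,\di s
\ean
is a martingale. Well-posedness of this martingale problem is classical and identifies the law of $Z$ with that of a reflected Brownian motion started at $x$.

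To prove the claim, for each $\epsilon>0$ I would construct $f_\epsilon \in C_b^2([0,\infty))$ that equals the constant $f(0)$ on $[0,\epsilon/2]$, equals $f$ on $[\epsilon,\infty)$, and smoothly interpolates on $[\epsilon/2,\epsilon]$. Because $f'(0)=0$, Taylor expansion gives $|f(\epsilon)-f(0)|=O(\epsilon^2)$ and $|f'(\epsilon)|=O(\epsilon)$, so a Hermite-type interpolation can be chosen with $\|f_\epsilon''\|_\infty$ bounded uniformly in $\epsilon$. Since $f_\epsilon - f(0)$ vanishes on $[0,\epsilon/2]$, condition (2) says $M_t^{f_\epsilon}$ is a martingale. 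Letting $\epsilon\downarrow 0$, $f_\epsilon\to f$ uniformly on compacts, so $f_\epsilon(Z_t)\to f(Z_t)$, while
\ban
\Big|\int_0^t \bigl(f_\epsilon''(Z_s) - f''(Z_s)\bigr)\,\di s\Big| \leq C \int_0^t \bI_{[0,\epsilon]}(Z_s)\,\di s \longrightarrow C \int_0^t \bI_{\{0\}}(Z_s)\,\di s = 0
\ean
by monotone convergence and property (3). Uniform boundedness of $f_\epsilon$ and $f_\epsilon''$ transfers the martingale property to $M^f$ in the limit via dominated convergence applied to conditional expectations.

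The \emph{main obstacle} is the uniform bound $\|f_\epsilon''\|_\infty \leq C$, which hinges on $f'(0)=0$---exactly the Neumann condition defining the domain of the reflected BM generator. Without this flatness, $f_\epsilon''$ would be of order $1/\epsilon$ on the transition strip, and the integral above would no longer vanish but rather converge to a local-time term, reflecting the Skorokhod decomposition $Z_t = x + W_t + L_t$ of reflected Brownian motion; property (3) is precisely what rules out that obstruction and couples cleanly with the Neumann boundary condition to close the argument.
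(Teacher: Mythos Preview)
The paper does not prove this proposition; it is quoted as a known characterization with a reference to Varadhan's lecture notes, so there is no proof in the paper to compare against.

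Your argument is correct and follows the standard route to this kind of result. Two small remarks. First, condition (2) is stated for \emph{smooth} test functions, whereas a cubic Hermite interpolation only produces a $C^1$ match at the gluing points; replace it by $f_\epsilon = f(0) + \chi_\epsilon\,(f - f(0))$ with a smooth cutoff $\chi_\epsilon$ supported on $[\epsilon/2,\infty)$, equal to $1$ on $[\epsilon,\infty)$, and satisfying $|\chi_\epsilon^{(k)}| \leq C\epsilon^{-k}$. The Taylor estimates you already use, $|f-f(0)| = O(\epsilon^2)$ and $|f'| = O(\epsilon)$ on $[0,\epsilon]$ coming from $f'(0)=0$, then give $\|f_\epsilon''\|_\infty \leq C$ exactly as before, and $f_\epsilon - f(0)$ is now genuinely smooth and vanishes near $0$. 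Second, the well-posedness of the Neumann martingale problem that you invoke at the end is precisely the content of Varadhan's notes (or the Stroock--Varadhan theory of diffusions with boundary), so your proof is in effect a clean reduction to the same source the paper cites rather than an independent derivation; this is entirely appropriate for a result the paper treats as background.
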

For the proof of Theorem \ref{t:wu} we will need a variant of the change of variables formula for functions vanishing on a neighborhood of the 
irregular point of the SDE.
\begin{lem}
\label{l:ito}
Let $\phi\in C^1(\bR\backslash\{0\})$ and let $(\widetilde X,\widetilde B)$ be a weak solution of the SDE
\ban
&X_t=x+\int_0^t \phi(X_s)\circ \di B_s:=x+\int_0^t \phi(X_s)\, \di B_s+\frac12 [\phi(X),B]_t.
\ean
Then for any $g\in C^2(\bR)$ which vanishes on a neighborhood of zero we have  
\ba
g(\widetilde X_t)& =g(X_0)+\int_0^t g'\bigl(\widetilde X_s\bigr)\phi\bigl(\widetilde X_s\bigr) \,\di\widetilde  B_s \\
& {} + 
\frac12 \int_0^t \varphi\bigl(\widetilde X_s\bigr)\Big(g''\bigl(\widetilde X_s\bigr)\phi\bigl(\widetilde X_s\bigr)+g'\bigl(\widetilde X_s\bigr)\phi'\bigl(\widetilde X_s\bigr) \Big) \,\di s.
\ea
\end{lem}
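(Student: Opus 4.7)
The plan is to view $\widetilde X$ as a continuous semimartingale directly from its defining equation, apply the classical Itô formula for $g\in C^2$ to $g(\widetilde X)$, and then identify the covariation measure $\di[\phi(\widetilde X),\widetilde B]_s$ on the set where $g'$ does not vanish.

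From the equation itself, $\widetilde X_t=x+M_t+V_t$, where $M_t=\int_0^t\phi(\widetilde X_s)\,\di\widetilde B_s$ is a continuous local martingale and $V_t=\tfrac12[\phi(\widetilde X),\widetilde B]_t$ is continuous and of locally bounded variation, so $\widetilde X$ is a continuous semimartingale with $[\widetilde X]_t=\int_0^t \phi(\widetilde X_s)^2\,\di s$ and $[\widetilde X,\widetilde B]_t=\int_0^t \phi(\widetilde X_s)\,\di s$. The classical Itô formula applied to $g(\widetilde X)$, together with the decomposition $\di\widetilde X_s=\phi(\widetilde X_s)\,\di\widetilde B_s+\tfrac12\,\di[\phi(\widetilde X),\widetilde B]_s$, reduces the assertion of the lemma to the identity
\ban
\int_0^t g'(\widetilde X_s)\,\di[\phi(\widetilde X),\widetilde B]_s = \int_0^t g'(\widetilde X_s)\phi'(\widetilde X_s)\phi(\widetilde X_s)\,\di s.
\ean

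To prove this I would exploit that $g$, and hence $g'$, vanishes on some neighborhood $(-\delta,\delta)$ of zero. Choose a $C^\infty$ cutoff $\chi$ with $\chi\equiv 0$ on $(-\delta/2,\delta/2)$ and $\chi\equiv 1$ on $\{|x|\ge\delta\}$, and set $\psi:=\phi\chi\in C^1(\bR)$: then $\psi$ is bounded, vanishes near $0$, and agrees with $\phi$ on the support of $g'$. Mollify to obtain $\psi_n\in C^\infty(\bR)$ with $\psi_n\to\psi$ in $C^1_{\mathrm{loc}}(\bR)$. Itô's formula for the $C^2$ function $\psi_n$ yields $[\psi_n(\widetilde X),\widetilde B]_t=\int_0^t \psi_n'(\widetilde X_s)\,\di[\widetilde X,\widetilde B]_s=\int_0^t \psi_n'(\widetilde X_s)\phi(\widetilde X_s)\,\di s$; testing against $g'(\widetilde X_s)$ and letting $n\to\infty$ produces the right-hand side of the target identity, since $\psi_n'(\widetilde X_s)\to\phi'(\widetilde X_s)$ on $\mathrm{supp}\,g'\subseteq\{|x|\ge\delta\}$ with a uniform bound provided by the $C^1$ convergence.

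The main obstacle is the matching of the left-hand sides, namely the convergence $\int_0^t g'(\widetilde X_s)\,\di[\psi_n(\widetilde X),\widetilde B]_s\to \int_0^t g'(\widetilde X_s)\,\di[\phi(\widetilde X),\widetilde B]_s$. The key observation is that the limit difference $\phi(\widetilde X)-\psi(\widetilde X)=\phi(\widetilde X)\bigl(1-\chi(\widetilde X)\bigr)$ vanishes identically on $\{|\widetilde X_s|\ge\delta\}\supseteq\mathrm{supp}\,g'(\widetilde X)$, so on every open time-interval during which $\widetilde X$ stays in $\{|x|\ge\delta\}$ the bracket of this difference with $\widetilde B$ is constant and contributes nothing after integration against $g'(\widetilde X)$. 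A careful partition-level argument using the u.c.p.\ definition of the covariation, combined with the $C^1$ convergence $\psi_n\to\psi$ on $\{|x|\ge\delta\}$, controls the boundary-crossing contributions (when $\widetilde X$ crosses the level $\delta$), and closes the argument.
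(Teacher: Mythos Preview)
Your reduction hinges on the claim that $V_t=\tfrac12[\phi(\widetilde X),\widetilde B]_t$ is of locally bounded variation, so that $\widetilde X$ is a continuous semimartingale and the classical It\^o formula applies. But this is precisely what the hypotheses do \emph{not} give you: the definition of a weak solution only requires that $[\phi(\widetilde X),\widetilde B]$ exist as a u.c.p.\ limit of cross-sums, and such limits need not have bounded variation unless both processes are semimartingales. Here $\phi(\widetilde X)$ is not known to be one---$\phi$ may blow up at $0$, and $\widetilde X$ may visit $0$---so the standard semimartingale bracket theory is unavailable. Without bounded variation of $V$, the Stieltjes integral $\int_0^t g'(\widetilde X_s)\,\di[\phi(\widetilde X),\widetilde B]_s$ on which your whole argument rests is not even defined, and the classical It\^o formula cannot be invoked for $\widetilde X$.

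The paper sidesteps this by never treating $\widetilde X$ as a global semimartingale. It works directly with the telescoping sum $g(X_t)-g(X_0)=\sum_k\bigl(g(X_{t_{k+1}})-g(X_{t_k})\bigr)$ along a partition, expands by Taylor, and then localizes: since $g$ vanishes on $[-2\e,2\e]$, only partition intervals with $|X_{t_k}|\ge 2\e$ contribute, and on an event of probability tending to~$1$ (controlled by the modulus of continuity of $X$) the process stays in $\{|x|\ge\e\}$ throughout each such interval. On those intervals $X$ \emph{is} an ordinary It\^o diffusion with $C^1$ coefficients, so there $[\phi(X),B]_{t_k,t_{k+1}}=\int_{t_k}^{t_{k+1}}\phi(X_s)\phi'(X_s)\,\di s$, and every term in the expansion can be handled by elementary estimates. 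Your cutoff idea points in the right direction, but it has to be implemented at the partition level---before any It\^o formula---rather than after one you are not entitled to use.
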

The proof of this Lemma essentially follows the lines of the proof of the classical It\^o
formula for It\^o processes and is postponed to Appendix \ref{a:ito}.

Eventually, we prove Theorem~\ref{t:wu}. Let $\bigl(\widetilde X,\widetilde B\bigr)$ be a weak solution of the SDE \eqref{e:1} spending zero time at $0$.
We consider the process
\ban
Z_t=\frac{1}{1-\alpha}\bigl|\widetilde X_t\bigr|^{1-\alpha},\quad t\geq 0.
\ean
which starts at $Z_0=\frac{1}{1-\alpha}|X_0|^{1-\alpha}$ and also spends zero time at $0$.

Let $f\in C^2_b(\bR_+)$  be zero on a neighborhood of $0$. The function
$g(x)=f\bigl(\frac{1}{1-\alpha}|x|^{1-\alpha}\bigr)$ is also twice continuously differentiable, bounded, 
and is zero on a neighborhood of $0$, and 
\ban
g'(x)&=f'(z)(x)^{-\alpha},\\
g''(x)&=f''(z)|x|^{-2\alpha}-\alpha f'(z)|x|^{-\alpha-1},\quad z= \frac{1}{1-\alpha}|x|^{1-\alpha}.
\ean
Then Lemma~\ref{l:ito} immediately yields
\ban %\label{e:powerito}
f(Z_t)&=g\bigl(\widetilde X_t\bigr)
=g(X_0)+\int_0^t g'\bigl(\widetilde X_s\bigr)\bigl|\widetilde X_s\bigr|^\alpha\,\di\widetilde  B_s\\
&\qquad\qquad\qquad  + 
\frac12 \int_0^t \Big(g''\bigl(\widetilde X_s\bigr)\bigl|\widetilde X_s\bigr|^{2\alpha}+\alpha g'\bigl(\widetilde X_s\bigr)\bigl(\widetilde X_s\bigr)^{2\alpha-1} \Big) \,\di s\\
&=f(Z_0)+\int_0^t f'(Z_s)\sign \widetilde X_s\,\di \widetilde B_s +\frac12 \int_0^t f''(Z_s)\,\di s,
\ean
so that the process
\ba
t\mapsto
f(Z_t)-f(Z_0)-\frac12 \int_0^t f''(Z_s)\,\di s
\ea
is a martingale.

\section{Proof of the  Theorem \ref{t:ws}} \label{sec:seven}

Let $\bigl(\widetilde X,\widetilde B\bigr)$ be a weak solution of the SDE \eqref{e:1} spending zero time at 0. Then, by Theorem~\ref{t:wu},
\ban
\frac{1}{1-\alpha}\bigl|\widetilde X_t\bigr|^{1-\alpha} \stackrel{\di}{=} \Big|W - \frac{X_0}{1-\alpha}  \Big|
\ean
for some standard Brownian motion $W$, i.e.\ is a reflected Brownian motion starting at  $\frac{|X_0|}{1-\alpha}$.
We first establish \eqref{e:weakbtheta}. 

\begin{prp}\label{p:weak}
Let $Y$ be a continuous homogeneous strong Markov process starting at $y\in\bR$ such that $|Y|\stackrel{\di}{=}|W-y|$, $W$ being a standard 
Brownian motion. 
Then there is $\theta\in[-1,1]$ such that
$Y\stackrel{\di}{=}B^\theta$, where $B^\theta$ is the $\theta$-skew Brownian motion starting at $y$.
\end{prp}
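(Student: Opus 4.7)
The plan is to reduce the problem to the case $y=0$ by the strong Markov property, extract a single skewness parameter $\theta$ from the exit probabilities of $Y$ from symmetric intervals around $0$, and identify the resulting law with that of $B^\theta$.

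First I would handle the case $y\neq 0$, say $y>0$; the case $y<0$ is symmetric. Since $Y$ is continuous and $|Y|\stackrel{\di}{=}|W-y|$ starts at $|y|>0$, the sign of $Y_t$ is constant on $[0,T_0)$, where $T_0=\inf\{t\ge 0:Y_t=0\}$, so $Y=\sgn(y)\,|Y|$ on $[0,T_0]$, which is distributed as a standard Brownian motion started at $y$ and absorbed at $0$ -- exactly as $B^\theta$ is. Applying the strong Markov property of $Y$ at $T_0$, the problem reduces to identifying the law of $Y$ started at $y=0$.

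Assume therefore $y=0$ and, for $\epsilon>0$, let $\tau_\epsilon=\inf\{t:|Y_t|=\epsilon\}$ and $p_\epsilon:=\P_0(Y_{\tau_\epsilon}=+\epsilon)$. I would next show $p_\epsilon$ does not depend on $\epsilon$. Fix $0<\delta<\epsilon$ and apply the strong Markov property at $\tau_\delta$. On the event $\{Y_{\tau_\delta}=+\delta\}$, the reduction above says $Y$ continues as a Brownian motion from $+\delta$ until it returns to $0$: it exits $(0,\epsilon)$ at $\epsilon$ with probability $\delta/\epsilon$, and otherwise returns to $0$ and then exits $(-\epsilon,\epsilon)$ at $+\epsilon$ with probability $p_\epsilon$; an analogous description applies on $\{Y_{\tau_\delta}=-\delta\}$. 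This gives
\ban
\P_{+\delta}(Y_{\tau_\epsilon}=\epsilon)=\tfrac{\delta}{\epsilon}+\bigl(1-\tfrac{\delta}{\epsilon}\bigr)p_\epsilon, \quad \P_{-\delta}(Y_{\tau_\epsilon}=\epsilon)=\bigl(1-\tfrac{\delta}{\epsilon}\bigr)p_\epsilon,
\ean
and combining via $p_\epsilon=p_\delta\P_{+\delta}(Y_{\tau_\epsilon}=\epsilon)+(1-p_\delta)\P_{-\delta}(Y_{\tau_\epsilon}=\epsilon)$ yields, after simplification, $p_\delta=p_\epsilon$. So $p_\epsilon\equiv p$ is independent of $\epsilon$, and I set $\theta:=2p-1\in[-1,1]$.

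It remains to show $Y\stackrel{\di}{=}B^\theta$. From the defining SDE \eqref{e:sdesbm} one reads off that $B^\theta$ also has $|B^\theta|\stackrel{\di}{=}|W|$ and the same exit probability $p$ from every symmetric interval around $0$ started at $0$. Thus $Y$ and $B^\theta$ are continuous homogeneous strong Markov processes on $\bR$ with the same exit distributions from every open interval (Brownian on intervals avoiding $0$, skewness $p$ on intervals containing $0$) and the same expected exit times (from $|Y|\stackrel{\di}{=}|B^\theta|$); Dynkin's characterization of a continuous Markov process by its characteristic operator then yields $Y\stackrel{\di}{=}B^\theta$. The hard part will be this last identification: turning equality of exit probabilities at $0$ into equality of laws. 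A concrete alternative, if the operator-theoretic route is inconvenient in the details, is to invoke It\^o's excursion theory -- the strong Markov property forces the signs of the successive Brownian excursions of $Y$ from $0$ (indexed by the local time of $|Y|$) to be i.i.d.\ Bernoulli$(p)$ random variables, which is precisely the Harrison--Shepp excursion description of $B^\theta$.
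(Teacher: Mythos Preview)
Your proposal is correct and follows essentially the same route as the paper: reduce to $y=0$ via the strong Markov property, show the exit probability $p_\epsilon$ from symmetric intervals is constant using the strong Markov property at the smaller exit time together with the Brownian gambler's ruin probabilities, and then identify the law via Dynkin's characteristic operator. The paper takes the Dynkin route you mention first (computing the operator explicitly at $0$ and matching it with the generator of the skew Brownian motion), not the excursion-theoretic alternative.
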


%\marginpar{See Proposition 2.1 in \cite{etore2012existence} for $y=0$}

\begin{proof}
Since for any $\theta\in[-1,1]$,  $Y\stackrel{\di}{=}W+y\stackrel{\di }{=}B^\theta$ 
before the first hitting time of $0$, it is sufficient to consider the case of the initial starting point $y=0$.

Denote for $a<0<b$
\ban
\tau_{(a,b)}=\inf\{t\geq 0\colon Y_t\notin (a,b)\}
\ean
and show that the probability
\ban
p_+(\e)=\P(Y_{\tau_{(-\e,\e)}}=\e|Y_0=0),\quad \e>0,
\ean
does not depend on $\e$. 

Indeed, if $p_+(\e)= 0$ or $p_+(\e)=1$ for all $\e>0$, then the statement holds true.

Assume that there is $\e>0$ such that $p_+(\e)=\beta_+\in (0,1)$.

\chng[R2: rewritten]{
Let $0<\e<\e'$, then
\ban
p_+(\e')&=\P\Big(Y_{\tau_{(-\e',\e')}}=\e'\,\Big|\, Y_{\tau_{(-\e,\e)}}=\e,Y_0=0\Big)p_+(\e)
\\&\ + \P\Big(Y_{\tau_{(-\e',\e')}}=\e'\,\Big|\,Y_{\tau_{(-\e,\e)}}=-\e,Y_0=0\Big)(1-p_+(\e))\\
&=\P(Y_{\tau_{(-\e',\e')}}=\e'\mid Y_0=\e)p_+(\e)+ \P(Y_{\tau_{(-\e',\e)}}=\e'\mid Y_0=-\e)(1-p_+(\e)).
\ean
Since $\Law(Y_t; 0\leq t\leq \tau(0,\e')\mid Y_0=\e)= \Law (B_t; 0\leq t\leq \tau(0,\e')\mid B_0=\e  )$ we get by virtue of the gambler ruin problem for Brownian motion that
\ban
&\P(Y_{\tau_{(0,\e')}}=\e'\mid Y_0=\e)=\P(B_{\tau_{(0,\e')}}=\e'\mid B_0=\e)=\frac{\e}{\e'},\\
&\P(Y_{\tau_{(0,\e')}}=0\mid Y_0=\e)=\P(B_{\tau_{(0,\e')}}=0\mid B_0=\e)=1-\frac{\e}{\e'},
\ean
and hence 
\ban
\P(Y_{\tau_{(-\e',\e')}}=\e'\mid Y_0=\e)&=\P(Y_{\tau_{(0,\e')}}=\e'\mid Y_0=\e)+\P(Y_{\tau_{(0,\e')}}=0\mid Y_0=\e)p_+(\e')\\
&=\frac{\e}{\e'}+\Big(1-\frac{\e}{\e'}\Big)p_+(\e').
\ean
Analogously
\ban
\P(Y_{\tau_{(-\e',\e')}}=\e'\mid Y_0=-\e)&= \P(Y_{\tau_{(-\e',0)}}=0\mid Y_0=-\e)p_+(\e')=\Big(1-\frac{\e}{\e'}\Big)p_+(\e').
\ean
}
Hence we obtain that
\ban
p_+(\e')&=\frac{\e}{\e'}p_+(\e)+\Big(1-\frac{\e}{\e'}\Big)p_+(\e')p_+(\e)+  \Big(1-\frac{\e}{\e'}\Big)p_+(\e')   (1-p_+(\e))=p_+(\e)=\beta_+.
\ean
Let now $0<\e'<\e$. Due to the continuity of the paths of $Y$, $p_+(\e')>0$, so repeating the previous argument with $\e$ and $\e'$ interchanged
we eventually obtain that $p_+(\e)=\beta_+$ for all $\e>0$.

Since $Y$ is a continuous strong Markov process its law is uniquely determined by the Dynkin characteristic operator
\ban
\mathfrak A f(x):=\lim_{U\downarrow x}\frac{\E_x f(Y_{\tau(U)})-f(x)}{\E_x\tau(U)} ,
\ean
where $U$ is a bounded open interval containing $x$, see \cite[Chapter 5 \S 3]{Dynkin65}.
%\cite[Theorem 5.2]{Dynkin-63}

Choosing $U=U_\e=(x-\e,x+\e)$, a straightforward calculation yields that for $x\neq 0$ and $f$ being twice continuously differentiable at $x$
\ban
\mathfrak A f(x)=\lim_{\e\downarrow 0}\frac{\frac12 f(x+\e)+\frac12f(x-\e) -f(x)}{\e^2}=\frac12f''(x).
\ean
For $x=0$ the limit
\ban
\mathfrak A f(0)=\lim_{\e\downarrow 0}\frac{\beta_+ f(\e)+(1-\beta_+)f(-\e) -f(0)}{\e^2}=\frac12\Big(\beta_+ f''(0+)+(1-\beta_+)f''(0-)\Big)
\ean
exists for any continuous $f$ such that $\beta_+ f'(0+)=(1-\beta_+)f'(0-)$ and $f''(0+)$ and $f''(0-)$ exist and $f''(0+)=f''(0-)$.

Hence $\mathfrak A$ coincides with the generator of the $\theta$-skew Brownian motion with $\theta=2\beta_+-1$ 
(see \cite{Lejay-06}).
\end{proof}
\ansref{2}{
By Proposition~\ref{p:weak} and Theorem~\ref{t:wu},
% $$
% \widetilde B^\theta = \frac{1}{1-\alpha}\left(\bigl(\widetilde X_t\bigr)^{1-\alpha} - (X_0)^{1-\alpha}\right)
% $$
the process $\widetilde B^\theta =\frac{1}{1-\alpha}\bigl(\widetilde X\bigr)^{1-\alpha}$
is a $\theta$-skew Brownian motion with some $\theta\in[-1,1]$, starting at $B^\theta_0=\frac{1}{1-\alpha}\bigl(X_0\bigr)^{1-\alpha}$. Equivalently,
\be\label{e:wB}
\widehat B_t =\frac{1}{1-\alpha}\bigl(X_0\bigr)^{1-\alpha}+ \widetilde B_t^\theta - \theta L_t(\widetilde B^\theta) 
\ee
is a standard Wiener process. Comparing with \eqref{e:tildeB}, we need to show that $\widehat B = \widetilde B$. By the results of \cite{HShepp-81}, $\widetilde B$ is adapted to the filtration generated by $\widehat B$. Now we want to show that $\widehat B = \widetilde B$ a.s. By Lemma~\ref{l:ito}, for any function $g\in C^2(\bR)$, vanishing on a neighbourhood of $0$, and for any $t>0$ it holds
\ban
g\bigl(\widetilde X_t\bigr)& =g(X_0)+\int_0^t g'\bigl(\widetilde X_s\bigr)\bigl|\widetilde X_s\bigr|^{\alpha} \,\di\widetilde  B_s\\
&\qquad  + 
\frac12 \int_0^t \bigl|\widetilde X_s\bigr|^{\alpha}\Bigl(g''\bigl(\widetilde X_s\bigr)\bigl|\widetilde X_s\bigr|^{\alpha}+\alpha g'\bigl(\widetilde X_s\bigr)\bigl|\widetilde X_s\bigr|^{\alpha-1}\Bigr) \,\di s.
\ean
A similar formula, but with $\widehat B$ in place of $\widetilde B$, holds thanks to the equality $\widetilde X=\big((1-\alpha)\widetilde B^\theta\big)^{\frac{1}{1-\alpha}}$, \eqref{e:wB} and the usual It\^o formula for semimartingales applied to $\widetilde B^\theta$. Indeed, 
}
\chng[R2: explanations added]{
\ban
g\bigl(\widetilde X_t\bigr)&= g\Big(\big((1-\alpha)\widetilde B^\theta_t\big)^{\frac{1}{1-\alpha}}\Big)\\
&=g(X_0)+\int_0^t g'\bigl(\widetilde X_s\bigr)\bigl|\widetilde X_s\bigr|^{\alpha} \,\di\widehat  B_s
+\theta \int_0^t g'\bigl(\widetilde X_s\bigr)\bigl|\widetilde X_s\bigr|^{\alpha} \,\di L_s(\widetilde B^\theta)
\\
&\qquad  + 
\frac12 \int_0^t \bigl|\widetilde X_s\bigr|^{\alpha}\Bigl(g''\bigl(\widetilde X_s\bigr)\bigl|\widetilde X_s\bigr|^{\alpha}+\alpha g'\bigl(\widetilde X_s\bigr)\bigl|\widetilde X_s\bigr|^{\alpha-1}\Bigr) \,\di s\\
&=g(X_0)+\int_0^t g'\bigl(\widetilde X_s\bigr)\bigl|\widetilde X_s\bigr|^{\alpha} \,\di\widehat  B_s\\
&\qquad  + 
\frac12 \int_0^t \bigl|\widetilde X_s\bigr|^{\alpha}\Bigl(g''\bigl(\widetilde X_s\bigr)\bigl|\widetilde X_s\bigr|^{\alpha}+\alpha g'\bigl(\widetilde X_s\bigr)\bigl|\widetilde X_s\bigr|^{\alpha-1}\Bigr) \,\di s\\
\ean
}
Consequently,
$$
\int_0^t g'\bigl(\widetilde X_s\bigr)\bigl|\widetilde X_s\bigr|^{\alpha} \,\di\widetilde  B_s = \int_0^t g'\bigl(\widetilde X_s\bigr)\bigl|\widetilde X_s\bigr|^{\alpha} \,\di\widehat  B_s. 
$$
Now taking a sequence of non-negative functions $g_n\in C^2(\bR)$, $n\ge1$, vanishing in some neighborhood of $0$ and such that $g_n\ge 0$ $g_n'(x)|x|^\alpha \uparrow 1$, $x\neq 0$, $n\to\infty$, we get
$$
\int_0^t \bI_{\widetilde X_t\neq 0} \di\widetilde B_t = \int_0^t \bI_{\widetilde X_t\neq 0} \di\widehat B_t
$$
a.s. Since $\bI_{\widetilde X_t\neq 0} = 1$ a.e.\ by assumption, it follows that $\widetilde B_t = \widehat B_t$ a.s. As a result, $\widetilde B^\theta$ is adapted to the augmented filtration of $\widetilde B$, so in view of \eqref{e:weakbtheta}, the same is true for $\widetilde X$. Since $\widetilde X$ also satisfies \eqref{e:tildesde} by definition, it is a strong solution.

\section{Proof of Theorem \ref{t:cov}\label{s:Y}} 

Let $\theta\in(-1,1)\setminus\{0\}$; the case of $\theta=\pm 1$ is considered in Appendix \ref{a:petit}, and the case $\theta=0$ is covered by \cite{FPSh-95}.
Define
%\marginpar{as in Lejay (35)}
\ban
\sigma(x) = \frac{2}{1+\theta \sign x} % = \begin{cases}
%\frac{2}{1+\theta}, & x>0,\\
%\frac{2}{1-\theta}, & x<0,\\
%2, &  x = 0,
%\end{cases}
\quad \text{ and }\quad 
\beta(x) = \frac{1}{\sigma(x)}.
%= \begin{cases}
%\beta_+=\frac{1+\theta}{2}, & x>0,\\
%\beta_-=\frac{1-\theta}{2}, & x<0,\\
%\frac12, &  x = 0.
%\end{cases}
\ean 
Let $Y^\theta$ be the unique strong solution of the SDE 
\ba
\label{e:Y}
Y_t^\theta=u+\int_0^t \sigma(Y_s^\theta)\,\di B_s,\quad u\in \bR,
\ea
and consider the two-dimensional Markov process $(Y^\theta,B)$ with the law 
$$
\P_{u,w}:=\Law((Y^\theta,B)|Y_0^\theta = u,\,B_0 = w).
$$ 
The process $Y^\theta$ is called an oscillating Brownian motion, see e.g.\ \cite{KeilsonW-78,LejPig18}. 
\chng[R1: reference added]{}

The skew Brownian motion with parameter $\theta$, starting from $w_0\in\bR$ and driven by a Brownian motion $B$ %starting at $B_0=w$, 
is the unique strong solution to the following stochastic differential equation
%\marginpar{I do not like too many initial conditions, but how? }
\begin{equation}\label{eq:skewbm}
B_t^\theta  = w_0 + (B_t-w) + \theta L_t(B^\theta).
\end{equation}

Further, define the functions
\ban
r(x)&=\frac{x}{\sigma (x)}=x\beta(x)= 
\begin{cases}
\frac{1+\theta}{2}x, & x\geq 0,\\
\frac{1-\theta}{2}x, & x<0,
\end{cases} \qquad s(x) = x\sigma(x)=\frac{x}{\beta(x)},
\ean 
then $s(r(x)) \equiv x$. 
The application of the It\^o--Tanaka formula (compare with \cite[Section 5.2]{Lejay-06}) yields 
\ba
\label{eq:dep}
r(Y_t^\theta)& =r(u)+B_t-w+ \frac{\theta}{2} L_t(Y_t^\theta)\\
&= r(u)+B_t-w+ \theta L_t(B^\theta)=r(u)-w_0+B^\theta_t.
\ea
% Hence
% \ba
% \Law( (Y,B)|Y_0=u,B_0=w )=\Law( (s(B^\theta),B)|B^\theta_0=r(u)-w,B_0=w )
% \ea
% 
% 
% Clearly, $(Y^\theta,B)$ is a Markov process; denote by $\P_{u,w}$ its distribution given $Y_0 = u$, $B_0 = w$.

In the following lemma we will use the functional dependence \eqref{eq:dep} of the processes $(Y^\theta,B)$ and $(B^\theta, L(B^\theta))$ to 
determine the marginal density 
of the pair $(Y^\theta_t,B_t)$.

\begin{lem}
For $\theta\in(-1,1)\setminus\{0\}$, $t>0$, the joint distribution of $Y_t^\theta$ and $B_t$ given $Y_0^\theta = u$, $B_0 = w$ is 
\ba\label{eq:YBdensx}
\P_{u,w}(Y_t^\theta\in \di y, B_t\in \di z)  & =  
\frac{2\beta^2(y)}{\theta^2 \sqrt{2\pi t^3}}
\Big(2y\beta^2(y)
- \kappa u
- z +w\Big)\\
&\ \times \exp\Big(-\frac{1}{2\theta^2 t}\Big(  2y  \beta^2(y) - \kappa u
- z +w )^2\Big)\Big)\,\di z\, \di y,
\ea
where $\kappa =  \frac{1}{2}(1-\theta^2)$, if $\theta^{-1} (r(y) - r(u) - z  +w )> 0$,
and 
\ba
\label{eq:YBdistx}
% &\P_{x,w} \big(Y_t\in \di y, B_t = w  +s(y) - s(x)\big) 
%  = \frac{\sigma(x)}{\sqrt{2\pi t}} \left(\ex^{- \frac{(s(y)-s(x))^2}{2 t}} -  \ex^{- \frac{(s(y)+s(x))^2}{2t}}\right) \di y\, \bI_{xy\ge 0}.\\
&\P_{u,w} \Big(Y_t^\theta\in \di y, B_t
= w  +r(y) - r(u)\Big) \\&\quad 
=\frac{\beta(u) }{|\theta|\sqrt{2\pi t}}\Big(\ex^{-\frac{(r(y)-r(u))^2}{2t}} - \ex^{-\frac{(r(y)+r(u))^2}{2t}}  \Big)\cdot \bI_{uy>0}\, \di y.
\ea
In particular, the joint density of $(Y^\theta_t,B_t)$ provided that $Y_0=u=0$, $B_0 =w=0$ is 
\begin{equation}
\label{eq:jointYBdensity}
p(t,y,z) =  \frac{2 \beta^2(y)}{\theta^2\sqrt{2\pi t^3}}
\Big(2y\beta^2(y) - z\Big) \exp\Big(-\frac{1}{2\theta^2 t}\Big(  2y\beta^2(y) 
- z \Big)^2\Big)\cdot  \bI_{\theta^{-1}(r(y) - z) >0 }.
\end{equation}
\end{lem}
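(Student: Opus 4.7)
The proof rests on the coupling \eqref{eq:dep}: choosing the driving skew Brownian motion to start at $w_0 = r(u)$ yields $B^\theta_t = r(Y^\theta_t)$ and, by inverting the SDE \eqref{eq:skewbm} for $B^\theta$ (driven by the shifted Brownian motion $B - w$),
\ban
L_t(B^\theta) = \theta^{-1}\bigl(r(Y^\theta_t) - r(u) - B_t + w\bigr).
\ean
Hence $(Y^\theta_t, B_t)$ is a smooth invertible function of $(B^\theta_t, L_t(B^\theta))$: with $s = r^{-1}$,
\ban
y = s(b),\qquad z = w + b - r(u) - \theta\ell,
\ean
and a direct computation gives the Jacobian $|\partial(y, z)/\partial(b, \ell)| = |\theta|/\beta(y)$ off $\{b=0\}$. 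The problem therefore reduces to computing the joint law of the skew Brownian motion and its symmetric local time starting from $r(u)$.

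\textbf{The joint law of $(B^\theta_t, L_t(B^\theta))$.} I split into two pieces. On $\{L_t = 0\}$ the process $B^\theta$ has not yet reached zero and evolves as a Brownian motion killed at $0$, whence
\ban
\P(B^\theta_t \in \di b,\ L_t = 0) = \frac{1}{\sqrt{2\pi t}}\Bigl(\ex^{-(b-r(u))^2/(2t)} - \ex^{-(b+r(u))^2/(2t)}\Bigr) \bI_{b r(u) > 0}\,\di b.
\ean
On $\{L_t > 0\}$ I plan to exploit the identity $|B^\theta| \stackrel{\di}{=} |B|$ (reflected Brownian motion) combined with the fact that, after the first visit to $0$, each excursion of $B^\theta$ is independently positive with probability $(1+\theta)/2$ and negative with probability $(1-\theta)/2$. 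Conditioning on the first passage to $0$ at time $\sigma$ and applying the standard joint density of reflected Brownian motion and its local time on $(\sigma, t]$, the convolution of first-passage densities collapses to
\ban
\P(B^\theta_t \in \di b,\ L_t \in \di\ell) = 2\beta(b)\,\frac{|r(u)| + |b| + \ell}{\sqrt{2\pi t^3}} \exp\Bigl(-\frac{(|r(u)| + |b| + \ell)^2}{2t}\Bigr)\,\di b\,\di\ell,\quad \ell > 0.
\ean

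\textbf{Matching the formulae.} Multiplying by the Jacobian factor $\beta(y)/|\theta|$ and substituting $b = r(y)$, $\ell = \theta^{-1}(r(y) - r(u) + w - z)$ produces the absolutely continuous part of $\P_{u,w}(Y^\theta_t \in \di y, B_t \in \di z)$. The final identification with \eqref{eq:YBdensx} hinges on the piecewise-linear identity
\ban
2y\beta^2(y) - \kappa u - z + w = \theta\bigl(|r(y)| + |r(u)| + L_t(B^\theta)\bigr),\qquad \kappa = \tfrac{1-\theta^2}{2},
\ean
which I verify by a short case analysis on the four orthants of $(\sgn y, \sgn u)$ using the piecewise form of $r$ and $\beta$. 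The singular piece \eqref{eq:YBdistx} is obtained by pushing the killed-BM density above along the one-dimensional curve $z = w + r(y) - r(u)$ and using $\beta(y) = \beta(u)$ on $\{uy > 0\}$. The special case \eqref{eq:jointYBdensity} then drops out of \eqref{eq:YBdensx} with $u = w = 0$: the skew Brownian motion then starts at $0$, so $\P(L_t = 0) = 0$ and the singular part vanishes.

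\textbf{Main obstacle.} The delicate step is establishing the joint density of $(B^\theta_t, L_t(B^\theta))$ from a non-zero starting point. The decomposition at the first hit of $0$, although classical for Brownian motion, must be combined with the skew-Brownian excursion convention and the symmetric local time in a way that preserves the expected prefactor $2\beta(b)$. A secondary nuisance is sign-tracking in the algebraic identity when $\theta < 0$: in that regime $2y\beta^2(y) - \kappa u - z + w$ inherits the sign of $\theta$, which must cancel against the $|\theta|$ in the Jacobian to yield a manifestly positive density.
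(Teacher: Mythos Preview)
Your approach is correct and essentially the same as the paper's: both proofs exploit the coupling $B^\theta_t = r(Y^\theta_t)$, $L_t(B^\theta) = \theta^{-1}(r(Y^\theta_t) - r(u) - B_t + w)$ and push the known joint law of $(B^\theta_t, L_t(B^\theta))$ through the change of variables with Jacobian $\beta(y)/|\theta|$. The only differences are cosmetic: the paper simply cites the joint density of $(B^\theta_t, L_t(B^\theta))$ from the literature (Appuhamillage et al., \'Etor\'e--Martinez, Gairat--Shcherbakov) rather than rederiving it via the first-passage decomposition you outline, and it handles your piecewise-linear identity in one line by writing $|r(y)| = r(y)\sign y$ and observing $r(y)(1+\theta\sign y) = 2y\beta^2(y)$, $r(u)(1-\theta\sign u) = \kappa u$, which spares the four-orthant case analysis.
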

\begin{proof}
\chng[R1: references added, the proof shortend]{
The joint distribution of $B^\theta_t$ and $L_t(B^\theta)$, $t>0$, is well known and can be found, e.g.\ in 
\cite{Appuhamillage2011corrections,etore2012existence,gairat2017density}:
% 
% We note that the pair $(|B^\theta|,L(B^\theta))$ has the same distribution as $(|B|,L(B))$ whose marginal law can be easily recalculated from the 
% joint distribution of $(|B_t|,\frac12L_t(B))$ given in
% \cite[formulae \textbf{3}.1.3.8  (1) and (2), p.\ 335]{BorSal02}, namely for $w\in \bR$, $l>0$
% \ban
% &\P\Big(|B_t|\in \di b, L_t(B)\in \di l\,\Big|\,|B_0|  = |w|\Big) = 
% 2\frac{\abs{w}+ b +l }{ \sqrt{2\pi t^3}} \ex^{-\frac{(\abs{w}+b+l)^2}{2t}}\di b\, \di l,   b>0, \\
% &\P\Big(|B_t|\in \di b, L_t(B)=0\,\Big|\,|B_0|  = |w|\Big) = 
% \frac{1}{ \sqrt{2\pi t}}\Big(\ex^{-\frac{(b-|w|)^2}{2t}} - \ex^{-\frac{(b+|w|)^2}{2t}}  \Big)\, \di b. 
% \ean
% Then we note that before hitting zero (i.e.\ on the event $L_t(B^\theta)=0$), the skew Brownian motion $B^\theta_t$ conicides in law with $B_t$, 
% which leads to 
% \ba
% \label{eq:BLdistr}
% &\P\Big(B_t^\theta\in \di b, L_t(B)=0\Big| B_0^\theta  = w_0\Big) = 
% \frac{1}{ \sqrt{2\pi t}}\Big(\ex^{-\frac{(b-w_0)^2}{2t}} - \ex^{-\frac{(b+w_0)^2}{2t}}  \Big)\bI_{zw_0>0}\, \di b . 
% \ea
% On the other hand, after having hit zero (i.e.\ on the event $L_t(B^\theta)>0$), the skew Brownian motion 
% $B^\theta_t$ chooses its sign with probabilities $\beta_\pm=\frac{1\pm \theta}{2}$ 
% independently of the past, and hence $B^\theta_t\stackrel{\di}{=}|B_t|\cdot Z$, $Z$ being an independent Bernoulli random variable with
% $\P(Z=\pm 1)= \beta_\pm$.
% This immediately leads to the following formula for the density:
% % \marginpar{formally, what for $\sigma(0)=2$?}
\ba
\label{eq:BLdensity}
&\P\Big(B_t^\theta\in \di b, L_t(B^\theta)\in \di l\Big| B_0^\theta = w_0\Big)
\\&= 2\beta_+\cdot \bI_{b\geq 0}\frac{l+ |w_0|+\abs{b} }{\sqrt{2\pi t^3}} \ex^{-\frac{(l+ |w_0|+\abs{b})^2}{2t}}\di z\, \di l\\
&\ +2\beta_-\cdot \bI_{b<0}\frac{l+ |w_0|+|b| }{\sqrt{2\pi t^3}}  \ex^{-\frac{(l+ |w_0|+\abs{b})^2}{2t}}\di z\, \di l\\
&=2\beta(b)\cdot \frac{l+ |w_0|+|b| }{\sqrt{2\pi t^3}} \ex^{-\frac{(l+ |w_0|+|b|)^2}{2t}}\di z\, \di l,\quad  b\in \bR,\ l>0,
\ea
where in the last equality we redefined the value of the density at $b=0$ for convenience.
% The formula  \eqref{eq:BLdensity} can be also found in
% \cite{Appuhamillage2011corrections}.
}

Recall now that
\ban
&B^\theta_t=r(Y^\theta_t)-r(u)+w_0,\\
&L_t(B^\theta)=\frac1\theta\Big(r(Y^\theta_t)-r(u)-B_t +w \Big).
\ean
The initial condition of $u=Y^\theta_0$ given, let us fix $w_0=r(u)$, so that $B^\theta_t=r(Y^\theta_t)$ for $t\geq 0$.
Then the change of variables $b=b(y,z)$, $l=l(y,z)$,
\ba
\label{eq:chofvars}
&b=r(y),\quad l=\frac1\theta\Big(r(y)-r(u)-z +w\Big),
\ea
yields
\ba
\label{eq:l+x_0+b}
l+ |w_0|+|b|&=\frac1\theta\Big(r(y)-r(u)-z +w \Big) +|r(u)|+|r(y)|\\
&= \frac{ r(y)+\theta |r(y)|- r(u)+ \theta |r(u)|- z + w}{\theta}\\
&= \frac{ r(y)(1+\theta \sign y)- r(u)(1-\theta \sign u)- z + w}{\theta}\\
&= \frac{2y\beta^2(y)}{\theta}
- \frac{u(1-\theta^2)}{2\theta}
- \frac{z - w}{\theta},
\ea
where we made use of the relation $|r(y)|=r(y)\sign y$.
For $y,u\neq 0$, the Jacobian for the change of variables $(y,z)\to (b,l)$ is given by \eqref{eq:chofvars} and its determinant equal
\ban
J=
\begin{pmatrix}
\beta(y) & 0 \\
 \frac{\beta(y)}{\theta} & -\frac{1}{\theta}
\end{pmatrix},
\qquad |\det J| =\frac{\beta(y)}{|\theta|},
\ean
whence, noting that $\sigma(b) = \sigma(y)$, we get \eqref{eq:YBdensx}. 
Similarly, we have \eqref{eq:YBdistx}.
The remaining formula \eqref{eq:jointYBdensity} follows by plugging in
$w=0$.
\end{proof}

From now on we assume without loss of generality that $\theta\in(0,1)$. 
Let all the processes under consideration will be started at zero, $u=w=w_0=0$, so that $Y^\theta= B^\theta/\beta(B^\theta)$.

Note that for any $t>0$
\ba 
\label{2Y_t - B_t}
r(Y_t^\theta) - B_t &  = B_t^\theta - B_t = \theta L_t(B^\theta) > 0,\\
2Y_t^\theta\beta^2(Y_t^\theta) - B_t  & = 2\beta(B_t^\theta) B_t^\theta - B_t = \big(1 + \theta \sign B_t^\theta \big)B_t^\theta - B_t\\ 
& = B_t^\theta - B_t + \theta B_t^\theta \sign B_t^\theta = \theta L_t(B^\theta) + \theta \abs{B_t^\theta} >0. 
\ea

Our aim now is to prove a generalized It\^o formula for $B^\theta$, in the spirit of \cite{FPSh-95}. 
Towards this end, on a fixed time interval $[0,T]$ we first establish a stochastic differential equation for the time-reversed pair 
$(\bar Y_t^\theta, \bar B_t) = (Y_{T-t}^\theta,  B_{T-t})$, which is interesting by its own.
We follow the method developed by \cite{haussmann1985time,haussmann1986time} for Markovian diffusions.  
For $y\neq 0$ and $z < r(y)$, $s\in [0,T)$ define the functions
\ban
\bar b^y (s,y,z) & = \frac{1}{p(T-s,y,z)}\Big(\sigma^2(y)\cdot \frac{\partial p}{\partial y}(T-s,y,z) 
+ \sigma(y)\cdot\frac{\partial p}{\partial z}(T-s,y,z)\Big),\\
\bar b^z (s,y,z) & =\frac{1}{p(T-s,y,z)}\Big(\sigma(y)\cdot \frac{\partial p}{\partial y}(T-s,y,z) 
+ \frac{\partial p}{\partial z}(T-s,y,z)\Big)  = \frac{\bar b^y (s,y,z)}{\sigma(y)},
\ean
and set $\bar b^y (s,0,z) = \bar b^z (s,0,z) = \bar b^y(T,y,z) =  \bar b^z(T,y,z) =  0$. 
Noting that 
\ban
\frac{\partial p}{\partial z}(T-s,y,z)  = 
\frac{2\beta^2(y) }{\theta^2\sqrt{2\pi (T-s)^3}} \exp\Big(-\frac{(2y\beta^2(y)-z)^2}{2\theta^2 (T-s)}\Big) 
\Big( -1 + \frac{\big(2y\beta^2(y)-z\big)^2}{\theta^2 (T-s)}\Big), \ z<r(y),
\ean
and $\frac{\partial p}{\partial z}(T-s,y,z) = - 2 \beta^2(y) \frac{\partial p}{\partial y}(T-s,y,z)$, we get 
\ba
\label{eq:b-y}
\bar b^y (s,y,z) & =\frac{2\beta(y)-1}{\beta(y)}
\Big(\frac{1}{2y\beta^2(y)-z} - \frac{2y\beta^2(y)-z}{\theta^2(T-s)}\Big) \\
&= \frac{\theta\sign y}{ \beta(y)}\cdot \Big(\frac{1}{2y\beta^2(y)-z} - \frac{2y\beta^2(y)-z}{\theta^2(T-s)}\Big) ,\quad z<r(y).
\ea
 %In view of \eqref{2Y_t - B_t}, $\bar b^y (T-s,Y_s,Z_s)$ and $\bar b^z (T-s,Y_s,Z_s)$ are well defined for all $s\in[0,T]$.
\begin{prp}
Let for $\theta\in(-1,1)\setminus\{0\}$, $B^\theta$ be a solution of \eqref{eq:skewbm} started at $0$. 
Then for any $T>0$, $(\bar Y^\theta_t, \bar B_t) = (Y^\theta_{T-t},B_{T-t})$ is a weak solution to the stochastic differential equation
\ba 
\label{eq:reversed sde}
\bar Y_t^\theta & = Y^\theta_T + \int_0^t \bar b^y (s,\bar Y^\theta_s, \bar B_s)\, \di s  + \int_0^t \sigma(\bar Y^\theta_s)\,\di \bar W_s,\\
\bar B_t & = B_T + \int_0^t \bar b^z (s,\bar Y^\theta_s, \bar B_s)\, \di s  + \bar W_t, \quad t\in [0,T],
\ea
$\bar W$ being a standard Brownian motion.
\end{prp}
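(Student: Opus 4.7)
The plan is to invoke the Haussmann--Pardoux time-reversal framework for Markovian diffusions, with the explicit joint density $p(t,y,z)$ of \eqref{eq:jointYBdensity} supplying the analytic input. I view $(Y^\theta,B)$ as a 2D forward diffusion with zero drift and the (rank-one) diffusion matrix having entries $a^{yy}(y)=\sigma^2(y)$, $a^{yz}(y)=a^{zy}(y)=\sigma(y)$, $a^{zz}=1$, corresponding to the single Brownian noise $\di B$ driving both components. By the Haussmann--Pardoux theorem, the reversed process is a diffusion with the same diffusion matrix and drift
\ban
\bar b^i(s,x)=-b^i(T-s,x)+\frac{1}{p(T-s,x)}\sum_j\partial_{x_j}\bigl(a^{ij}(x)p(T-s,x)\bigr);
\ean
with $b\equiv 0$ and $\sigma$ locally constant on $\{y\neq 0\}$ (which, by \eqref{2Y_t - B_t}, contains the support of $p$), this simplifies precisely to the $\bar b^y$ and $\bar b^z$ given in the statement, together with the consistency $\bar b^y=\sigma\bar b^z$ dictated by the rank-one structure. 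The explicit form \eqref{eq:b-y} then follows by differentiating \eqref{eq:jointYBdensity} and using the identity $\partial_z p=-2\beta^2(y)\partial_y p$ already recorded in the excerpt.

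The core technical step is to verify the martingale problem for the candidate reversed generator: for every $f\in C_c^\infty((\bR\setminus\{0\})\times\bR)$,
\ban
f(\bar Y^\theta_t,\bar B_t)-f(\bar Y^\theta_0,\bar B_0)-\int_0^t\Big(\bar b^y\partial_y f+\bar b^z\partial_z f+\tfrac12\bigl(\sigma^2\partial_{yy}f+2\sigma\partial_{yz}f+\partial_{zz}f\bigr)\Big)(\bar Y^\theta_s,\bar B_s)\,\di s
\ean
is a martingale in the natural filtration of $(\bar Y^\theta,\bar B)$. This is the standard time-reversal computation: expanding conditional expectations through $p$, applying Fubini, and integrating by parts to move spatial derivatives from $f$ onto $p$, one reduces the identity to the forward Kolmogorov equation $\partial_t p=\tfrac12\bigl(\partial_{yy}(\sigma^2 p)+2\partial_{yz}(\sigma p)+\partial_{zz}p\bigr)$ satisfied by $p$ on $\{y\neq 0\}$. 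Once the martingale problem is established, setting $\bar W_t:=\bar B_t-B_T-\int_0^t\bar b^z(s,\bar Y^\theta_s,\bar B_s)\,\di s$ and applying L\'evy's characterization identifies $\bar W$ as a standard Brownian motion driving both reversed components as in \eqref{eq:reversed sde}.

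The main obstacle is that $\sigma$ is discontinuous at $y=0$ and the diffusion matrix $a$ is degenerate of rank one, violating the uniform ellipticity and H\"older regularity hypotheses of the classical Haussmann--Pardoux setting. I would circumvent this by localization: work separately on the open halfplanes $\{y>0\}$ and $\{y<0\}$, on each of which $\sigma$ and hence $a$ are constant and smooth, and then glue the two pieces using that $Y^\theta$ spends zero Lebesgue time at $0$, so the singular set $\{y=0\}$ carries no mass either for the forward or for the reversed dynamics. The rank-one degeneracy itself is harmless because it is precisely encoded in the relation $\bar b^y=\sigma\bar b^z$, which guarantees that a single Brownian motion $\bar W$ suffices to drive both reversed components through the coefficient $\sigma(\bar Y^\theta)$.
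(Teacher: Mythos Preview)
Your strategy coincides with the paper's: both follow Haussmann--Pardoux, reduce to the martingale problem for the reversed generator, and verify it via integration by parts against the explicit density $p$ together with the forward equation $\partial_t p=\mathcal L p$. The rank-one degeneracy is handled identically (one Brownian motion, $\bar b^y=\sigma\bar b^z$).

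The difference lies in the treatment of the line $\{y=0\}$. You propose to take test functions $f\in C_c^\infty((\bR\setminus\{0\})\times\bR)$, verify the martingale problem on each half-plane (where $\sigma$ is constant and the integration by parts is clean), and then ``glue'' using that $Y^\theta$ spends zero Lebesgue time at $0$. The paper instead allows test functions supported in $D=\{r(y)>z\}$ that may cross $\{y=0\}$, and carries out the integration by parts explicitly across $y=0$: boundary terms such as $\beta_\pm^2\,p(s,0\pm,w)\,\partial_x v(s,0\pm,w)$ and $\beta_\pm^2\,\partial_x p(s,0\pm,w)$ appear and are shown to cancel by direct computation (this is the content of the paper's Appendix, which also establishes the backward equation $(\partial_s+\mathcal L)v=0$ for the conditional expectation $v$).

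Your gluing step, as written, is where the argument is incomplete. Knowing the martingale problem only for test functions vanishing near $\{y=0\}$ does not by itself yield the global semimartingale decomposition of $\bar B$: to extract $\bar W$ via L\'evy you would approximate $z\mapsto z$ by $\phi_n(y)\psi(z)$ with cutoffs $\phi_n\uparrow 1$, but then the generator produces terms involving $\phi_n'$ and $\phi_n''$ concentrated near $y=0$, and ``zero time at $0$'' alone does not guarantee these vanish in the limit (the derivatives of the cutoffs blow up). What actually makes them vanish is precisely the algebraic cancellation of the one-sided boundary contributions that the paper checks by hand. Alternatively, you could first argue that $\bar B$ is a priori a semimartingale in the reversed filtration and then identify its finite-variation part on the open set $\{\bar Y^\theta\neq 0\}$; but that a priori semimartingale property is itself a nontrivial ingredient and should be stated and justified. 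Either way, the passage through $\{y=0\}$ requires more than the zero-time observation.
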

\begin{rem}
a) Thanks to \eqref{2Y_t - B_t}, the coefficients of \eqref{eq:reversed sde} are well defined. 

\noindent
b) Equation \eqref{eq:reversed sde} is very degenerate, and $\bar Y^\theta$ and $\bar B$ 
evolve proportionally whenever $\bar Y^\theta_t\neq 0$. This, however, will not hinder our analysis. 
\end{rem}

\begin{proof}
As above, we continue to assume without loss of generality that $\theta\in (0,1)$ and $T=1$.
We need to show that $(\bar Y^\theta, \bar B)$ is a solution to the martingale problem with the generator 
\ban
\bar{\mathcal L}_t f (y,z) = \Big(\bar b^y(t,y,z)\cdot \frac{\partial}{\partial y}  + \bar b^y(t,y,z)\cdot \frac{\partial}{\partial y} 
+ \frac{\sigma(y)^2}{2}\cdot \frac{\partial^2}{\partial y^2} + \sigma(y)\cdot \frac{\partial^2}{\partial y\,\partial z} 
+ \frac{1}{2}\cdot \frac{\partial^2}{\partial z^2}\Big) f(y,z).
\ean
Thanks to \eqref{2Y_t - B_t}, it is enough to establish
\begin{equation*}
\E \Big[ \Big(f(\bar{Y}^\theta_t, \bar{B}_t) - f(\bar{Y}^\theta_s, \bar{B}_s) 
- \int_s^t \bar{\mathcal L}_u f(\bar{Y}^\theta_u, \bar{B}_u)\,\di u \Big)\cdot  g(\bar{Y}^\theta_s, \bar{B}_s)\Big] = 0
\end{equation*}
for any $0\le s<t<1$ and functions $f,g\in C^\infty(\bR^2)$ having compact support inside the domain $D:=\{(y,z)\in \bR^2\colon r(y)-z>0\}$. 
Equivalently, 
\begin{equation*}
\E \Big[ \Big(f(Y_t^\theta, B_t) - f(Y_s^\theta,B_s) + \int_s^t \bar{\mathcal L}_{T-u} f(Y^\theta_u, B_u)\,\di u \Big)\cdot g(Y^\theta_t, B_t)\Big] = 0
\end{equation*}
for any fixed $0< s<t\le 1$. Define for $(y,z)\in \bR^2$ 
\ban
v(s,y,z) = \E \Big[ g(Y_t^\theta, B_t)\Big| (Y^\theta_s, B_s)= (y,z)\Big].
\ean
It is proved in Appendix \ref{a:pde} that $v$ solves the partial differential equation
\ban
\Big(\frac{\partial}{\partial s} + \mathcal L\Big) v(s,y,z) = 0,
\ean
where
\begin{equation}\label{e:ell}
\mathcal L f(y,z) = \Big(\frac{\sigma(y)^2}{2}\cdot \frac{\partial^2}{\partial y^2} 
+ \sigma(y)\cdot \frac{\partial^2}{\partial y\,\partial z} + \frac{1}{2}\cdot \frac{\partial^2}{\partial z^2} \Big) f(y,z).
\end{equation}
%%$$
%%\Big(\frac{\partial}{\partial s} + \frac{\sigma(y)^2}{2}\cdot \frac{\partial^2}{\partial y^2} + \sigma\cdot \frac{\partial^2}{\partial y\,\partial z} + \frac{1}{2}\cdot \frac{1}{\partial y^2} \Big) v(s,y,z) = 0, s\in[0,t), y\neq 0,z\in\bR.
%%$$
%It is also easy to see that the joint density $p(t,y,z)$ of $(Y_t,B_t)$ given by \eqref{eq:jointYBdensity} solves ????
Denote by $\scprod{\cdot,\cdot}$ the scalar product in $L^2(\bR^2)$. Write
\begin{align*}
 \E[f(Y^\theta_t,B_t)g(Y^\theta_t,B_t)] & - \E[f(Y^\theta_s,B_s)g(Y^\theta_t,B_t)] \\
&=  \E[f(Y^\theta_t,B_t)v(t,Y^\theta_t,B_t)]  - \E[f(Y^\theta_s,B_s)v(s,Y^\theta_s,B_s)]\\
& = \scprod{fp(t),v(t)}  - \scprod{fp(s),v(s)}  \\
&=\int_s^t \scprod{f \frac{\partial}{\partial u} p(u),v(u)}\,\di u + \int_s^t \scprod{f p(u),\frac{\partial }{\partial u} v(u)}\,\di u \\
& = \int_s^t \scprod{f\frac{\partial}{\partial u} p(u),v(u)}\,\di u - \int_s^t \scprod{f p(u),\mathcal L v(u)}\,\di u.
\end{align*}
Using \eqref{eq:Libp} from the Appendix \ref{a:pde}, we get
\begin{align*}
\scprod{fp(u),\mathcal L v(u)\vphantom{\big)}} = & \scprod{v(u),\mathcal L \big(fp(u)\big)}.
\end{align*}
Write 
\begin{equation}\label{eq:p(u,y,z)}
p(u,y,z) = \frac{2\beta^2(y)}{\theta} \phi'_{u}\Big(\frac{z-2y\beta^2(y)}{\theta}\Big)\bI_{z<r(y)},
\end{equation}
where $\phi_t(x) = \frac{1}{\sqrt{2\pi t}} \ex^{-\frac{x^2}{2t}}$ is the standard Gaussian density.
Now we have
\begin{align*}
\mathcal L \big(f p(u)\big)(y,z) = &f(y,z)\cdot \mathcal L p(u,y,z) + p(u,y,z)\cdot \mathcal Lf(y,z)\\
& + \Big(\sigma(y) \cdot \frac{\partial}{\partial y} f(y,z) 
+ \frac{\partial}{\partial z} f(y,z)\Big)\Big(\sigma(y)\cdot \frac{\partial}{\partial y}p(u,y,z) + \frac{\partial}{\partial z}p(u,y,z)\Big).
\end{align*}
From \eqref{eq:p(u,y,z)}, for $y\neq 0$, $z<r(y)$,
\begin{align*}
\mathcal L p(u,y,z)&  = \Big(\frac{4\beta^4(y)}{\theta^3} - \frac{4\beta^3(y)}{\theta^3} + \frac{\beta^2(y)}{\theta^3}\Big)\cdot
\phi'''_u\Big(\frac{z-2y\beta^2(y)}{\theta}\Big) \\
& = \frac{\beta^2(y)}{\theta^3}\cdot ( 2\beta(y)-1 )^2\cdot \phi'''_u\Big(\frac{z-2y\beta^2(y)}{\theta}\Big) 
= \frac{\beta^{2}(y)}{\theta}\cdot \phi'''_u\Big(\frac{z-2y\beta^2(y)}{\theta}\Big).
\end{align*}
On the other hand, since $\frac{\partial}{\partial t} \varphi'_t = \frac{1}{2}\varphi'''_t$, we get
$$
\mathcal L p(u,y,z) = \frac{\partial}{\partial u} p(u,y,z). 
$$
Further, denote
\ban
h(u,y,z) = \sigma(y)\cdot \frac{\partial}{\partial y}p(u,y,z) + \frac{\partial}{\partial z}p(u,y,z). 
%= \frac{2}{\theta^3}\big(2\sigma(y) - 1\big)\phi_u''\Big(\frac{z-2y\sigma(y)^2}{\theta}\Big) = \frac{2\sign y}{\theta^2} \phi''_u\Big(\frac{z-2y\sigma(y)^2}{\theta}\Big):= .
\ean
Then we have 
\ban
\scprod{v(u),\mathcal L \big(fp(u)\big)} 
= & \scprod{v(u), f\frac{\partial}{\partial u} p(u)} + \scprod{v(u),p(u) \mathcal L f} 
+ \scprod{v(u),h(u)\Big(\sigma(y)\cdot \frac{\partial}{\partial y}f + \frac{\partial}{\partial z}f\Big)}.
\ean
Observe that 
\ban
\scprod{v(u),p(u) \mathcal L f} & = \int_{-\infty}^\infty \int_{-\infty}^\infty v(u,y,z) p(u,y,z)\mathcal L f (y,z)\di z\, \di y \\
& = \E \big[ v(u,Y^\theta_u,B_u) \mathcal L f(Y_u^\theta,B_u)\big] 
 = \E \big[ \E\big[g(Y_t^\theta,B_t)\big| Y^\theta_u,B_u\big] \mathcal L f(Y^\theta_u,B_u)\big]\\
& =  \E \left[ g(Y^\theta_t,B_t) \mathcal L f(Y^\theta_u,B_u)\right]
\ean
and similarly
\begin{align*}
&\scprod{v(u),h(u)\cdot\Big(\sigma(y)\cdot \frac{\partial}{\partial y}f + \frac{\partial}{\partial z}f\Big)} 
= \scprod{v(u),p(u)\cdot \frac{h(u)}{p(u)}\cdot\Big( \sigma(y)\cdot \frac{\partial}{\partial y}f + \frac{\partial}{\partial z}f\Big)}\\
& = \scprod{v(u),p(u)\Big(\bar b^y (T-u) \frac{\partial}{\partial y}f + \bar b^z (T-u)\frac{\partial}{\partial z}f\Big)} \\
& = \E\Big[g(Y_t^\theta,B_t)\Big(\bar b^y (T-u,Y_u,B_u) \frac{\partial}{\partial y}f(Y^\theta_u,B_u) 
+ \bar b^z (T-u,Y_u,B_u)\frac{\partial}{\partial z}f(Y_u^\theta,B_u)\Big) \Big],
\end{align*}
Collecting everything, 
\begin{align*}
&   \E[f(Y_t^\theta,B_t)g(Y_t^\theta,B_t)] - \E[f(Y_s^\theta,B_s)g(Y^\theta_t,B_t)] 
=   - \E \Big[  g(Y^\theta_t,B_t)\int_s^t \mathcal L f(Y^\theta_u,B_u)\Big] \\
& - \E\Big[g(Y_t^\theta,B_t)\int_s^t\Big(\bar b^y (T-u,Y^\theta_u,B_u) \frac{\partial}{\partial y}f(Y^\theta_u,B_u) 
+ \bar b^z (T-u,Y^\theta_u,B_u)\frac{\partial}{\partial z}f(Y^\theta_u,B_u)\Big)\, \di u\Big]\\
& = - \E\Big[g(Y_t^\theta,B_t)\int_s^t \bar{\mathcal L}_{T-u} f(Y^\theta_u,B_u)\, \di u\Big],
\end{align*}
as required. 
\end{proof}

Consider a sequence of partitions $D_n$ of the form $0=t_0^n<t_1^n<\cdots<t_n^n=T$ with $|D_n|=\max_{1\leq k\leq n}|t_k^n-t^n_{k-1}| \to 0$, $n\to\infty$
(we will often omit the superscript $n$). 

\begin{proof}[Proof of Theorem~\ref{t:cov}]
Note that $f(B^\theta_t) = f(r(Y_t^\theta)) = g(Y_t^\theta)$ with $g\in L^2_{\mathrm{loc}}(\bR)$, so it suffices 
to establish a similar statement for $Y^\theta$. The rest of proof goes similarly to \cite{FPSh-95}. 

First note that by usual localization argument, we can assume that $g\in L^2(\bR)$. Also for a continuous function $h$, the quadratic variation 
$$
 [h(Y^\theta),B]_t = \lim_{n\to\infty} \sum_{t_k\in D_n,t_k<t} \Big(h(Y^\theta_{t_k})-h(Y^\theta_{t_{k-1}})\Big)(B_{t_k}-B_{t_{k-1}})
$$
exists as a limit in u.c.p. Indeed, since $B$ is a semimartingale, and $h(Y^\theta)$ is an adapted continuous process, then 
by \cite[Theorem 21, p.\ 64]{Protter-04},
$$
\lim_{n\to\infty} \sum_{t_k\in D_n,t_k<t}  h(Y^\theta_{t_{k-1}})(B_{t_k}-B_{t_{k-1}}) = \int_0^t h(Y^\theta_s)\,\di B_s 
$$
in u.c.p. The time-reversed process is also a semimartingale, so arguing as in \cite{FPSh-95}, we have the convergence to the backward integral
$$
\lim_{n\to\infty} \sum_{t_k\in D_n,t_k<t}  h(Y^\theta_{t_{k}})(B_{t_k}-B_{t_{k-1}}) = \int_0^t h(Y^\theta_s)\,\di^* B_s
$$
in u.c.p. Therefore, we obtain
$$
[h(Y^\theta),B]_t = \int_0^t h(Y_s^\theta)\,\di^* B_s - \int_0^t h(Y_s^\theta)\,\di B_s.
$$
Fix  some $T>0$ and let now $\{h_m\}_{m\geq 1}$ be a sequence of continuous functions, such that $h_m\to g$ in $L^2(\bR)$ as $m\to\infty$. 
Denote 
\begin{align*}
I_m(t)  &:=  \int_0^t \big(h_m(Y_s^\theta) - g(Y_s^\theta)\big)\,\di B_s,\\
S_{n,m}(t) & := \sum_{t_k\in D_n, t_k\leq t} \big(h_m(Y^\theta_{t_{k-1}}) - g(Y^\theta_{t_{k-1}})\big) (B_{t_k}-B_{t_{k-1}}).
\end{align*}
Since $B^\theta$ is a skew Brownian motion starting from $0$, its density is $\P (B_t^\theta \in \di b) = \frac{\beta(b)}{\sqrt{2\pi t}} \ex^{-\frac{b^2}{2t}}\di b$, $b\in \bR$. 
Then the density of $Y_t^\theta$ satisfies $p(t,y) = \frac{\beta^2(y)}{\sqrt{2\pi t}}\ex^{-\frac{r^2(y)}{2t}}\leq \frac{1}{\sqrt{t}}$, 
so we can estimate, using the Doob inequality that
\begin{align*}
 \E  \sup_{t\in[0,T]}I_m^2(t)  & \leq 4 \int_0^T \E\Big[ \big(h_m(Y_t^\theta) - g(Y_t^\theta)\big)^2 \Big]\,\di t \\
&  = 4\int_0^T \int_{-\infty}^\infty \big(h_m(y) - g(y)\big)^2 p(t,y)\,\di y\, \di t\\
& \le 4\|h_m - g\|^2_{L^2(\bR)}\cdot \int_0^T \frac{\di t}{\sqrt{t}}\le C \|h_m - g\|^2_{L^2(\bR)}.
\end{align*}
Similarly, 
\begin{align*}
\E \sup_{t\in[0,T]} S_{n,m}^2(t) 
 & =   \E \sum_{t_k\in D_n} \big(h_m(Y^\theta_{t_{k-1}}) - g(Y^\theta_{t_{k-1}})\big)^2(t_k-t_{k-1})   \\
& \le C \sum_{t_k\in D_n,k>1} \frac{t_{k}-t_{k-1}}{\sqrt{t_{k-1}}}\|h_m-g\|^2_{L^2(\bR)},
\end{align*}
whence
\begin{align*}
\limsup_{n\to \infty} \E \sup_{t\in[0,T]} S^2_{n,m}(t) 
\le   C \|h_m - g\|^2_{L^2(\bR)}.
\end{align*}
As a result, we get $\sup_{t\in[0,T]}|I_m(t)|\overset{\P}{\longrightarrow} 0$, $m\to \infty$, and for any $\e>0$
$$
\lim_{m\to\infty}\limsup_{n\to \infty}\P \Big(\sup_{t\in[0,T]}\big|S_{n,m}(t)\big|>\e \Big)= 0. 
$$
Hence, using that 
$$
\sum_{t_k\in D_n, t_k<t} h_m(Y^\theta_{t_{k-1}})(B_{t_k}-B_{t_{k-1}})\to 
\int_0^t h_m(Y^\theta_s)\,\di B_s, \quad n\to\infty,
$$
uniformly on $[0,T]$ in probability, we get that 
$$
\sum_{t_k\in D_n, t_k<t} g(Y^\theta_{t_{k-1}})(B_{t_k}-B_{t_{k-1}})\to 
\int_0^t g(Y_s^\theta)\,\di B_s, \quad n\to\infty,
$$
uniformly on $[0,T]$ in probability. 

Further, recall that the time-reversed process $(\bar{Y}^\theta_t,\bar{B}_t) = (Y^\theta_{T-t},B_{T-t})$ satisfies 
\eqref{eq:reversed sde} in the weak sense. As far as the convergence in probability is concerned, we can safely 
assume that $(\bar{Y}^\theta, \bar{B})$ satisfies \eqref{eq:reversed sde} with the same Brownian motion $\bar W$. Then we can write 
\begin{align*}
&\int_0^t g(Y_s^\theta)\, \di^* B_s  =  \int_{T-t}^T g(\bar Y^\theta_s)\,\di \bar{B}_s =
\int_{T-t}^T g(\bar{Y}^\theta_s)\, \di \bar W_s 
+ \int_{T-t}^T g(\bar{Y}^\theta_s)\bar b^y(s,\bar{Y}^\theta_s,\bar B_s)\, \di s.
\end{align*}
Arguing as above, we have 
\begin{align*}
& \sum_{t_k\in D_n, t_k<t} g(Y_{t_{k}}^\theta)(\bar W_{T-t_k}-\bar W_{T-t_{k-1}}) \\
& = \sum_{t_k\in D_n, t_k<t} g(\bar Y^\theta_{T-t_{k}})(\bar W_{T-t_k}-\bar W_{T-t_{k-1}})\to 
\int_{T-t}^T g(\bar Y^\theta_s)\,\di \bar W_s,\ n\to\infty,
\end{align*}
uniformly on $[0,T]$ in probability. 
It remains to show that 
\begin{equation}\label{eq:sumgb->intgb}
\begin{aligned}
& \sum_{t_k\in D_n, t_k<t} g(Y_{t_{k}}^\theta)\int_{t_{k-1}}^{t_k} b^y(T-s,Y^\theta_s,B_s)\, \di s  
\\& = \sum_{t_k\in D_n, t_k<t} g(\bar Y^\theta_{T-t_{k}})\int_{T-t_{k-1}}^{T-t_k} b^y(s,\bar Y^\theta_s,\bar B_s)\, \di s \\
& \to \int_{T-t}^T g(\bar{Y}^\theta_{T-s})b^y(s,\bar Y_s^\theta,\bar B_s)\, \di s 
= \int_0^t g(Y_s^\theta) b^y(T-s,Y_s^\theta,B_s)\,\di s,\ n\to\infty,
\end{aligned}
\end{equation}
uniformly on $[0,T]$ in probability.

We will first establish an estimate for $\int_0^T |g(Y_s^\theta)  b^y(T-s,Y_s^\theta,B_s)|\, \di s$
 with $g\in L^2(\bR)$. 
From \eqref{eq:b-y}, using the Cauchy--Schwarz inequality, %and \eqref{2Y_t - B_t} 
we have 
\begin{align*}
& \int_0^T\E\Big|g(Y_s^\theta) b^y(s,Y_s^\theta,B_s)\Big|\, \di s 
\\& \leq C\int_0^T\E |g(Y_s^\theta) |\Big( \frac{1}{2Y_s^\theta\beta^2(Y^\theta_s) - B_s} + \frac{2Y^\theta_s\beta^2(Y^\theta_s) - B_s}{s}\Big)\, \di s \\
& \leq C \int_0^T \Big[\E |g(Y^\theta_s)|^2\cdot  \E \Big( \frac{1}{\big(2Y_s^\theta\beta^2(Y_s^\theta) - B_s\big)^2}
+  \frac{\big(2Y_s^\theta\beta^2(Y_s^\theta) - B_s\big)^2}{s^2}\Big) \Big]^{\frac12}\,\di s.
\end{align*}
As before, from the estimate $p(s,Y^\theta_s)\le \frac{C}{\sqrt{s}}$ it follows that
$$
\E|g(Y_s^\theta)|^2 \le \frac{C}{\sqrt{s}}\|g\|_{L^2(\bR)}^2.
$$
Further, using \eqref{2Y_t - B_t} and \eqref{eq:BLdensity} with $w_0 = 0$, we get
\ban
 \E  \frac{1}{\big(2Y^\theta_s\beta^2(Y_s^\theta) - B_s\big)^2}  & = \E  \frac{1}{\theta^2\big(B_s^\theta +L_s(B^\theta)\big)^2} \\
& = \frac{1}{\sqrt{2\pi s^3}}\int_0^\infty \int_{-\infty}^\infty\frac{\beta(b)}{l + |b|} \ex^{-\frac{(l+|b|)^2}{2s}}\,\di b\, \di l \\
& \le \frac{C}{\sqrt{s^3}} \int_{0}^\infty\int_{0}^\infty\frac{1}{l + b} \ex^{-\frac{(l+b)^2}{2s}}\,\di b\, \di l
   \le  \frac{C}{\sqrt{s^3}} \int_0^{\infty} \ex^{-\frac{z^2}{2s}}\, \di z \le \frac{C}{s}. 
\ean
Similarly, 
\ban
& \E \big(2Y_s^\theta\beta^2(Y^\theta_s) - B_s\big)^2 
\le \frac{C}{\sqrt{s^3}} \int_{0}^\infty\int_{0}^\infty (l+b)^3 \ex^{-\frac{(l+b)^2}{2s}}\, \di l\,\di b
\le  
\frac{C}{\sqrt{s^3}} \int_{0}^\infty z^4 \ex^{-\frac{z^2}{2s}}\di z \le Cs. 
\ean
Therefore, 
\begin{equation}
\label{eq:inthby}
\begin{aligned}
&\int_0^T\E |g(Y_s^\theta)b^y(s,Y_s^\theta,B_s) |\, \di s 
\\&\le C\|g\|_{L^2(\bR)}\int_{0}^T s^{-\frac{3}{4}}\, \di s \le C\|g\|_{L^2(\bR)}.
\end{aligned}
\end{equation}
Using similar estimates, we get
\begin{equation}
\label{eq:sumhby}
\begin{aligned}
 \sum_{t_k\in D_n, t_k<t}& \int_{t_{k-1}}^{t_k} \E |g(Y_{t_{k}}^\theta) b^y(T-s,Y_s^\theta,B_s)|\, \di s\\
& \le \sum_{t_k\in D_n, t_k<t} \int_{t_{k-1}}^{t_k} \Big(\E|g(Y^\theta_{t_{k}})|^2\cdot 
\E|b^y(T-s,Y^\theta_s,B_s)|^2 \Big)^{\frac12}\, \di s \\
& \leq C\|g\|_{L^2(\bR)}\sum_{t_k\in D_n, t_k<t} \int_{t_{k-1}}^{t_k} t_{k}^{-1/4} s^{-1/2}\,\di s\\
& \le C\|g\|_{L^2(\bR)}\int_{0}^{T}  s^{-3/4}\, \di s \le C\|g\|_{L^2(\bR)}.
\end{aligned}
\end{equation}
If $h\in C(\bR)$, then 
\ban
\delta_n = \max_{t_k\in D_n}\sup_{s\in[t_{k-1},t_k]} |h(Y^\theta_{t_{k}}) - h(Y^\theta_s)  |\to 0,\ n\to\infty,
\ean
almost surely, and we can estimate
\begin{align*}
& \Big|\sum_{t_k\in D_n, t_k<t} h(Y^\theta_{t_{k}})\int_{t_{k-1}}^{t_k} b^y(T-s,Y^\theta_s,B_s)\, \di s 
- \int_0^t h(Y^\theta_s)b^y(T-s,Y^\theta_s,B_s)\,\di s\Big|
\\
&\quad \le \delta_n \int_0^T |b^y(T-s,Y_s^\theta,B_s)|\,\di s.
\end{align*}
Similarly to the calculations above, 
\ban
\int_0^T\E |b^y(T-s,Y_s^\theta,B_s)|\, \di s  
\le \int_0^T\Big(\E|b^y(T-s,Y_s^\theta,B_s)|^2\Big)^{\frac12}\,\di s\le C\int_0^T s^{-\frac12}\,\di s \le C,
\ean
so that $\int_0^T |b^y(T-s,Y_s^\theta,B_s)|\, \di s$ is bounded in probability. Therefore, for $h\in C(\bR)$,
\ban
\sum_{t_k\in D_n, t_k<t} h(Y^\theta_{t_{k}})\int_{t_{k-1}}^{t_k} b^y(T-s,Y_s^\theta,B_s)\, \di s 
\to \int_0^t h(Y_s^\theta)b^y(T-s,Y^\theta_s,B_s)\,\di s,\ n\to\infty,
\ean
uniformly on $[0,T]$ in probability. Hence, taking, as before, 
a sequence $h_m\in C(\bR)$ converging to $g$ in $L^2(\bR)$ and using \eqref{eq:inthby} and \eqref{eq:sumhby}, we arrive at 
\eqref{eq:sumgb->intgb}.
Combined with our previous findings, this leads to
\ban
&\sum_{t_k\in D_n,t_k<t} \Big(f(B^\theta_{t_k})-f(B^\theta_{t_{k-1}})\Big)(B_{t_k}-B_{t_{k-1}}) \to \int_0^t f(B^\theta_{s})\,\di B_s - \int_0^t f(B^\theta_{s})\,\di^* B_s, \ n\to\infty,
\ean
uniformly on $[0,T]$ in probability. Since $T>0$ is arbitrary, this  means precisely that the desired u.c.p.\ convergence holds. 
\end{proof}

\section{Proof of Theorem \ref{t:main}} \label{sec:nine}

For definiteness we set $X_0=0$. Let us first show that $X^\theta$ is a strong solution to \eqref{e:1}. For $\theta = 0$ and $\alpha\in (-1,1)$, the statement follows directly from the It\^o formula proven in \cite*{FPSh-95}. 
%The case of $\theta\in\{-1,1\}$ will be considered separately. 
Let $\theta\in [-1,1]\setminus\{0\}$ and $\alpha\in (0,1)$. 
%\marginpar{Thm 7.32???}
If $h\in C^1(\bR)$, $h(0) = 0$, $H(x) = \int_0^x h(y)\, \di y$, then by the usual It\^o formula for semimartingales (see, e.g.\ 
\cite[Theorem II.32]{Protter-04}), we have 
\ban
H(B_t^\theta) %& = H(0) + \int_0^t h(B_s^\theta)\,\di B_s^\theta + \frac12\int_0^t h'(B^\theta_s)\,\di \langle B^\theta\rangle_s\\
&= H(0) + \int_0^t h(B_s^\theta)\,\di B_s +\theta \int_0^t h(B_s^\theta)\,\di L(B^\theta) 
+ \frac12[h(B^\theta),B]_t+\frac{\theta}{2} [h(B^\theta),L(B^\theta)]_t,
\ean
where the decomposition of quadratic variation into the sum holds true
since both $[h(B^\theta),B^\theta]$ and $[h(B^\theta),B]$ exist as u.c.p.\ limits. 
Furthermore since $h(0)=0$, the quadratic variation $[h(B^\theta),L(B^\theta)]$ and the integral w.r.t.\ $L(B^\theta)$ vanish a.s., so that
we obtain the equality
\ban
H(B_t^\theta)& = H( 0 ) + \int_0^t h(B_s^\theta)\,\di B_s+ \frac12[h(B^\theta),B]_t.
\ean
Taking a sequence $\{h_m\}$ of $C^1$-functions such that, $h_m(0)=0$, $h_m(x)=|(1-\alpha)x|^\alpha$ for $|x|\geq 1$ and 
$\sup_{x\in[0,1]}|h_m(x)-(1-\alpha)|x|^\alpha|\to 0$, $m\to\infty$, we utilize the
It\^o isometry and Theorem \ref{t:cov} to get the desired result.

Concerning the uniqueness, by Theorem~\ref{t:ws}, any strong solution must be given by \eqref{e:xtheta} with some $\theta\in[-1,1]$. 
So it remains to show that for $\theta\neq 0$ and $\alpha\in (-1,0]$, $X^\theta$ is not a solution of the SDE.

Let $\alpha=0$.
Clearly,
\ban
\int_0^t \bI(B_s^\theta\neq 0)\,\di B_s=B_t\quad \text{a.s.}
\ean
However
\ban
{}[\bI(B_\cdot^\theta\neq 0),B]\equiv 0 \quad \text{a.s.}
\ean
since $h(x)=\bI(x\neq 0)$ can be approximated by $h_m(x)\equiv 1$ in $L^2(\bR)$, and $[1,B]\equiv 0$.
Hence,
\ban
\int_0^t \bI(B_s^\theta\neq 0)\circ\di B_s=B_t\neq X^\theta_t=B_t+\theta L_t(B^\theta).
\ean
\smallskip
\noindent

For $\alpha\in (-1,0)$, the Stratonovich integral w.r.t.\ $B$ is well defined as the sum
\ban
\int_0^t |B_s^\theta|^\frac{\alpha}{1-\alpha}\circ \di B_s= \int_0^t |B_s^\theta|^\frac{\alpha}{1-\alpha}\,\di B_s
+\frac12 [  |B^\theta|^\frac{\alpha}{1-\alpha} ,B]_t
\ean
Choosing again a sequence of $C^1$-functions $\{h_n\}$ such that 
\ban
&h_m(x)\equiv |(1-\alpha)x|^\frac{\alpha}{1-\alpha},\quad |x|\geq 1,\\ 
&\|h_m(\cdot) - |(1-\alpha)(\, \cdot\, )|^\frac{\alpha}{1-\alpha}\|_{L^2(\bR)}\to 0,
\ean
we obtain that
\ban
&H_m(x)=\int_0^x h_m(y)\,\di y\to H(x)=\big( (1-\alpha) x\big)^\frac{1}{1-\alpha}
\ean
uniformly on $\bR$, so that we can apply the standard It\^o formula to obtain
\ban
H_m(B_t^\theta)& = H_m( 0 ) + \int_0^t h_m(B_s^\theta)\,\di B^\theta_s+ \frac12[h_m(B^\theta),B^\theta]_t\\
&=H_m( 0 ) + \int_0^t h_m(B_s^\theta)\circ \di B_s+\theta \int_0^t h_m(B^\theta_s)\circ\di L_s(B^\theta).
\ean
Passing to the limit as $m\to\infty$ we observe that $H_m(B^\theta_t)\to H(B^\theta_t)=X^\theta_t$ as well as 
\ban
\int_0^t h_m(B_s^\theta)\circ \di B_s&=\int_0^t h_m(B_s^\theta)\, \di B_s+\frac12 [h_m(B^\theta),B]_t\\
&\to 
\int_0^t h(B_s^\theta)\, \di B_s+\frac12 [h(B^\theta),B]_t=\int_0^t h(B_s^\theta)\circ \di B_s=\int_0^t |X_s^\theta|^\alpha\circ \di B_s
\ean
by the It\^o isometry and Theorem \ref{t:cov}.
However it is easy to see e.g.\ by the monotone convergence (if we choose $h_n$ monotonically increasing) that
\ban
&\int_0^t h(B^\theta_s)\circ \di L_s 
= \lim_{m\to\infty}\int_0^t h_m(B^\theta_s)\circ \di L_s\\ 
&= (1-\alpha)^\frac{\alpha}{1-\alpha} \lim_{m\to\infty} \lim_{n\to\infty}\sum_{t_k< t}
\frac{h_m(B^\theta_{t_k})+h_m(B^\theta_{t_{k-1}})}{2}(L_{t_k}-L_{t_{k-1}})=+\infty,
% 
% \int_0^t |(1-\alpha)B^\theta_s|^\frac{\alpha}{1-\alpha}\circ \di L_s \\
\ean
so that the SDE \eqref{e:1} is not satisfied unless $\theta=0$.

Note that the Riemann-Stieltjes integral w.r.t.\ $L$ does not exist since the points of increase of $L$ coincide with the points of 
discontinuity of $|B^\theta|^{\frac{\alpha}{1-\alpha}}$.

% \bibliography{biblio-new}

\appendix

\section{Proof of Lemma \ref{l:ito}}\label{a:ito}

To avoid cumbersome notation, we will use $X$ and $B$ in place of $\widetilde{X}$ and $\widetilde{B}$. 

Let $\e>0$ be such that $g(x)= 0$ for $|x|\leq 2\e$.
Without loss of generality we may assume that $g$, $g'$ and $g''$ are bounded on $\bR$ and 
$\phi$, $\phi'$ 
and $\phi''$ are bounded for $|x|\geq \e$ (otherwise we perform localization and consider the solution $X$ stopped 
after hitting an arbitrary threshold $\pm R$, $R>0$).

For $n\geq 1$, let $D_n = \{0=t_0^n<t_1^n<\cdots<t_n^n=t\}$ be a sequence of partitions of $[0,t]$ with the mesh
$\|D_n\|=\max_k|t^n_k-t^n_{k-1}|\to 0$, $n\to\infty$ (in the following we may omit the index $n$ for brevity).
 
First note that due to the additivity of the Stratonovich integral, for any $r\geq 0$ we have 
\ban
X_t=X_r+ \int_r^t \phi(X_s)\,\di B_s + \frac12[\phi(X),B]_{r,t}
\ean
and defining 
\ba
\tau_r^\e=\inf \{t\geq r\colon |X_t|\leq \e\}
\ea
we get that 
\ban
X_{t\wedge \tau_r^\e}=X_r+ \int_r^{t\wedge \tau_r^\e} \phi(X_s)\,\di B_s + \frac12\int_r^{t\wedge \tau_r^\e} \phi(X_s)\phi'(X_s)\,\di s
\ean
and in particular, for $\omega\in\{\tau_r^\e\leq t\}$
\ba
&{}[\phi(X),B]_{r,s}=\int_r^s \phi(X_u)\phi'(X_u)\,\di u,\\
&\Big |[\phi(X),B]_{r,s}\Big|\leq \max_{|x|\geq \e}|\phi(x)\phi'(x)| \cdot |s-r|,\quad r\leq s\leq t.
\ea
Let $w(\delta)$ be the modulus of continuity of $X$ on the interval $[0,t]$, namely
\ba
w(\delta)=w(\omega,\delta)=\sup_{\substack{|u-v|\leq \delta\\u,v\in[0,t] }}|X_u(\omega)-X_v(\omega)|.
\ea
Obviously $\lim_{\delta\to 0}w(\delta)=0$ a.s.
Denote by
\ba
w_n:=w(\|D_n\|)
\ea
and let $A_n^\e:=\{\omega\colon w_n<\e\}$. Then $\bI_{A_n^\e}(\omega)\to 1$ a.s.\ as $n\to \infty$.
Moreover, for $\omega\in A_n^\e$ we have:
\ba
\text{if}\quad   \sup_{s\in [t_k^n,t^n_{k+1}]}|X_{s}(\omega)|\geq 2\e \quad \text{then} 
\quad \inf_{s\in [t_k^n,t^n_{k+1}]}|X_{s}(\omega)|\geq \e .
\ea
Let us also denote
\ba
\bI_{t_k,s}^\e:=\bI\Big(\inf_{u\in[t_k,s]}|X_u|\geq \e\Big)
\ea
and note that for $\omega\in A_n^\e$ we have
\ba
\text{if $|X_{t_k}(\omega)|\geq 2\e$} \quad \text{then}\quad 
\bI_{t_k,s}^\e(\omega)\equiv 1,\ s\in[t_k,t_{k+1}].
\ea
Let us write $g(X_t)$ as a telescopic sum 
\ba
g(X_t)&=g(X_0)+\sum_{k}\Big(g(X_{t_{k+1}})-g(X_{t_k})\Big)\\
&=g(X_0)+\sum_k g'(X_{t_{k}}) (X_{t_{k+1}}-X_{t_k})+\frac12 g''(\xi_k) (X_{t_{k+1}}-X_{t_k})^2\\
&=g(X_0)+\sum_k g'(X_{t_{k}}) \int_{t_k}^{t_{k+1}}\phi(X_s)\,\di B_s\\
&\qquad \qquad +\frac12\sum_k g'(X_{t_{k}}) \cdot  [\phi(X),B]_{{t_k},{t_{k+1}}}\\
&\qquad \qquad +\frac12 \sum_k g''(\xi_k) (X_{t_{k+1}}-X_{t_k})^2,\quad \xi_k\in (X_{t_k},X_{t_{k+1}}).
\ea 
We will prove that the first sum converges to $\int_0^t g'(X_s)\phi(X_s)\,\di s$, the second sum converges in probability
to $\frac12\int_0^t  g'(X_s)\phi'(X_s)\,\di s$ 
whereas the third sum converges to $\frac12\int_0^t  g''(X_s)^2\phi(X_s)\,\di s$.

\noindent
1. Note that the It\^o integral $t\mapsto \int_0^t g'(X_{s}) \phi(X_s)\,\di B_s$ is well-defined on $[0,T]$ due to the boundedness of $x\mapsto g'(x) \phi(x)$. 
For any $\eta>0$, we estimate:
\ba
& \P\Big( \Big|\sum_k g'(X_{t_{k}}) \int_{t_k}^{t_{k+1}}\phi(X_s)\,\di B_s  - \int_0^t g'(X_{s}) \phi(X_s)\,\di B_s    \Big|\geq \eta  \Big)\\
&\leq 
\P\Big( \bI_{A_n^\e}\cdot \Big|\sum_k g'(X_{t_{k}}) \int_{t_k}^{t_{k+1}}\phi(X_s)\,\di B_s  - \int_0^t g'(X_{s}) \phi(X_s)\,\di B_s    \Big|\geq \eta  \Big)
+\P(\bar A_n^\e)\\
&\leq \eta^{-2}\E \bI_{A_n^\e}\Big[\sum_k \int_{t_k}^{t_{k+1}} \Big(g'(X_{t_{k}})- g'(X_{s})\Big)\phi(X_s)\,\di B_s    \Big]^2 +\P(\bar A_n^\e)\\
&=\eta^{-2}\E \bI_{A_n^\e}\Big[\sum_k \int_{t_k}^{t_{k+1}} \Big(g'(X_{t_{k}})- g'(X_{s})\Big)\bI\Big(\sup_{u\in[t_k,s]} |X_u-X_{t_k}|\leq \e\Big)\phi(X_s)\,\di B_s \Big]^2 
+  \P(\bar A_n^\e)\\
&\leq \eta^{-2}\E  \Big[\sum_k \int_{t_k}^{t_{k+1}} \Big(g'(X_{t_{k}})- g'(X_{s})\Big)\bI\Big(\sup_{u\in[t_k,s]} |X_u-X_{t_k}|\leq \e\Big)\phi(X_s)\,\di B_s \Big]^2 
+ \P(\bar A_n^\e)\\
&\leq \eta^{-2} \E \int_0^t  \Big|\sum_k \Big(g'(X_{t_{k}})- g'(X_{s})\Big)\bI\Big(\sup_{u\in[t_k,s]} |X_u-X_{t_k}|\leq \e\Big)\phi(X_s)
\bI_{(t_k,t_{k+1}]}(s) \Big|^2\,\di s 
+  \P(\bar A_n^\e)\to 0
\ea
where we have used 
the continuity of $X$, boundedness of $g'$ and $\phi$, and the fact that with probability 1 for each $s\in[0,t]$
\ba
\sum_k \Big(g'(X_{t_{k}})- g'(X_{s})\Big)\bI\Big(\sup_{u\in[t_k,s]} |X_u-X_{t_k}|\leq \e\Big)\phi(X_s)
\bI_{[t_k,t_{k+1})}(s)\to 0,\quad n \to\infty.
\ea

\noindent 
2. 
For any $\eta>0$, we estimate:
\ba
&\P\Big(\Big|  \sum_{k}g'(X_{t_{k}})\cdot   [\phi(X),B]_{{t_k},{t_{k+1}}}-\int_0^t g'(X_s)\phi(X_s)\phi'(X_s)\,\di s    \Big|\geq \eta \Big)\\
&\leq \P\Big(\bI_{A_n^\e}\Big|  \sum_{k}g'(X_{t_{k}}) \cdot   [\phi(X),B]_{{t_k},{t_{k+1}}}-\int_0^t g'(X_s)\phi(X_s)\phi'(X_s)\,\di s    \Big|\geq \eta \Big)
+\P(\bar A_n^\e)\\
% &\leq \sum_{k}g'(X_{t_{k}}) \cdot   [\phi(X),B]_{{t_k},{t_{k+1}}}-\int_0^t g'(X_s)\phi(X_s)\phi'(X_s)\,\di s\\
&= \P\Big(\bI_{A_n^\e}\Big|  \sum_{k}g'(X_{t_{k}}) \cdot   \int_{t_k}^{t_{k+1}} \phi(X_s)\phi'(X_s)\,\di s  
-\int_0^t g'(X_s)\phi(X_s)\phi'(X_s)\,\di s  \Big|\geq \eta \Big)
+\P(\bar A_n^\e)\\
&=\P\Big(\bI_{A_n^\e}\Big| \sum_{k} \int_0^t  \Big( g'(X_{t_{k}})- g'(X_s)\Big)\bI^\e_{t_k,s} \cdot \phi(X_s)\phi'(X_s)\,\di s \Big|\geq \eta \Big)
+\P(\bar A_n^\e)\to 0,\quad n\to\infty.
\ea

\noindent 
3. Eventually, to establish the convergence of the third summand we write the following decomposition
\ba
\sum_k g''(\xi_k) & (X_{t_{k+1}}-X_{t_k})^2-\int_0^t g''(X_s)\phi^2(X_s)\,\di s\\
&=\sum_k g''(X_{t_k})\Big[ \Big(\int_{t_k}^{t_{k+1}} \phi(X_s)\,\di B_s\Big)^2 -\int_{t_k}^{t_{k+1}}  \phi^2(X_s)\,\di s\Big] \\
&+\sum_k \int_{t_k}^{t_{k+1}} (g''(X_{t_k})- g''(X_s))\phi^2(X_s)\,\di s \\
&+\sum_k \Big(g''(\xi_k)-g''(X_{t_k})\Big) \Big(\int_{t_k}^{t_{k+1}} \phi(X_s)\,\di B_s\Big)^2  \\
&+2\sum_k g''(\xi_k) \int_{t_k}^{t_{k+1}} \phi(X_s)\,\di s\cdot [\phi(X),B]_{t_k,t_{k+1}} \\
&+\sum_k g''(\xi_k) [\phi(X),B]_{t_k,t_{k+1}}^2 \\
&=S_1+S_2+S_3+S_4+S_5
\ea
and show that each of $S_i$, $i=1,\dots, 5$, converges to zero in probability.

\noindent 
3.1 For any $\eta\geq 0$,
\ba
\P(|S_1| &\geq \eta)\leq \eta^{-2}\E \bI_{A_n^\e} S_1^2+ \P(\bar A_n^\e)\\
% &=\E \Big[ \bI_A \sum_k g''(X_{t_k})\cdot 
% \Big(\Big(\int_{t_k}^{t_{k+1}}\phi(X_s)\,\di B_s\Big)^2 -\int_{t_k}^{t_{k+1}}  \phi^2(X_s)\,\di s\Big)\Big]^2\\ 
&=\E \Big[ \bI_A \sum_k g''(X_{t_k})\cdot 
\Big(\Big(\int_{t_k}^{t_{k+1}}\bI^\e_{t_k,s}\cdot \phi(X_s)\,\di B_s\Big)^2 -\int_{t_k}^{t_{k+1}} \bI^\e_{t_k,s}\cdot  \phi^2(X_s)\,\di s\Big)\Big]^2
+ \P(\bar A_n^\e)\\
&\leq \E \Big[\sum_k g''(X_{t_k})\cdot 
\Big(\Big(\int_{t_k}^{t_{k+1}}\bI^\e_{t_k,s}\cdot \phi(X_s)\,\di B_s\Big)^2 -\int_{t_k}^{t_{k+1}} \bI^\e_{t_k,s}\cdot  \phi^2(X_s)\,\di s\Big)\Big]^2
+ \P(\bar A_n^\e)\\
&= \E |S_{11}|^2+ \P(\bar A_n^\e).
\ea
Denoting
\ba
M_{k+1}:=g''(X_{t_k})\cdot 
\Big(\Big(\int_{t_k}^{t_{k+1}}\bI^\e_{t_k,s}\cdot \phi(X_s)\,\di B_s\Big)^2 -\int_{t_k}^{t_{k+1}} \bI^\e_{t_k,s}\cdot  \phi^2(X_s)\,\di s\Big)
\ea
we observe that the process $(M_{k+1},\rF_{t_k})_{k=0,\dots,n-1}$ is a zero mean square integrable martingale, and that
\ba
\E |S_{11}|^2&= \E \sum_{k=0}^{n-1} M_{k+1}^2+2\E\sum_{i<j}M_{i+1}M_{j+1}.
\ea
Then clearly the second sum equals to zero (to see this note that 
$\E[M_{i+1}M_{j+1}]=\E [\E [M_{i+1}M_{j+1}|\rF_{t_j}]]=\E [M_{i+1}\E [M_{j+1}|\rF_{t_j}]]=0$). For the first sum we get
\ba
 \E \sum_k M_{k+1}^2 &\leq 2 \E \sum_k  |g''(X_{t_k})|^2 \Big(\int_{t_k}^{t_{k+1}}\bI^\e_{t_k,s}\cdot  \phi(X_s)\,\di B_s\Big)^4
\\&\qquad +2 \E\sum_k |g''(X_{t_k})|^2 \Big(\int_{t_k}^{t_{k+1}}\bI^\e_{t_k,s}\cdot   \phi^2(X_s)\,\di s\Big)^2\\
&\leq 2\cdot 36\cdot \|g''\|^2\cdot\sum_k (t_{k+1}-t_k) \E \int_{t_k}^{t_{k+1}}\bI^\e_{t_k,s}\cdot  |\phi(X_s)|^4\,\di s\\
&+2 \|g''\|^2\cdot \sup_{|x|\geq \e} |\phi(x)|^4\cdot\sum_k (t_{k+1}-t_k)^2\\
&\leq 74 t\cdot \|g''\|^2\cdot \sup_{|x|\geq \e}|\phi(x)|^4\cdot \|\Delta t^n\|\to 0, n\to \infty,
\ea
where in the second line we have applied the moment inequality for It\^o integrals, see, e.g.\ Theorem I.7.1 in 
\cite{Mao2007}.

\noindent 
3.2 To estimate $S_2$, we note that
\ba
S_2&=\Big|\sum_k \int_{t_k}^{t_{k+1}} (g''(X_{t_k})- g''(X_s))\phi^2(X_s)\,\di s\Big|\\&\leq
\max_k \sup_{s\in[t_k,t_{k+1}]}|g''(X_{t_k})- g''(X_s)| \cdot \int_0^t \phi^2(X_s)\,\di s\to 0\quad \text{a.s.}
\ea

\noindent 
3.3
Analogously, we have
\ba
S_3&=\Big|\sum_k \Big(g''(\xi_k)-g''(X_{t_k})\Big) \Big(\int_{t_k}^{t_{k+1}} \phi(X_s)\,\di B_s\Big)^2|\\
&\leq \max_k \sup_{s\in[t_k,t_{k+1}]}|g''(X_{t_k})- g''(X_s)| \cdot  \sum_k \Big(\int_{t_k}^{t_{k+1}} \phi(X_s)\,\di B_s\Big)^2\to 0\quad \text{a.s.}
\ea
since the sums in the previous line are bounded in probability:
\ba
\E \sum_k \Big(\int_{t_k}^{t_{k+1}} \phi(X_s)\,\di B_s\Big)^2=\E \int_0^t \phi^2(X_s)\,\di s<\infty.
\ea

\noindent 
3.4
For any $\eta>0$ we get for $S_4$:
\ba
& \P(|S_4|\geq \eta) 
\leq\P\Big(\bI_{A_n^\e}\sum_k |g''(\xi_k)| \int_{t_k}^{t_{k+1}} |\phi(X_s)|\,\di s\cdot \Big|[\phi(X),B]_{t_k,t_{k+1}}\Big|\geq \eta\Big)+
\P(\bar A_n^\e)\\
&\leq \P\Big(\bI_{A_n^\e}\sum_k |g''(\xi_k)| \int_{t_k}^{t_{k+1}} \bI_{t_k,s}|\phi(X_s)|\,\di s
\cdot \int_{t_k}^{t_{k+1}} \bI_{t_k,s} |\phi(X_s)\phi'(X_s)|\,\di s \geq \eta\Big)+
\P(\bar A_n^\e)\\
&\leq \P\Big( C_\e\cdot \|\Delta t^n\|\geq \eta\Big)+
\P(\bar A_n^\e),
\ea
where
\ba
C_\e=t\cdot \|g''\|\cdot \sup_{|x|\geq \e}|\phi(x)|\cdot   \sup_{|x|\geq \e}|\phi(x)\phi'(x)|.
\ea

\noindent 
3.5
Analogously, for $S_5$ we get
\ba
\P(|S_5|\geq \eta)& 
 \leq \P\Big(\bI_{A_n^\e}\sum_k |g''(\xi_k)| [\phi(X),B]_{t_k,t_{k+1}}^2\geq \eta \Big) +  \P(\bar A_n^\e)\\
&= \P\Big(\bI_{A_n^\e}\sum_k |g''(\xi_k)| \Big(\int_{t_k}^{t_{k+1}} \bI_{t_k,s} \phi(X_s)\phi'(X_s)\,\di s\Big)^2\geq \eta\Big)\\
&\leq \P\Big( C_\e\cdot \|\Delta t^n\|\geq \eta\Big)+ \P(\bar A_n^\e),
\ea
where
\ba
C_\e=t\cdot \|g''\|\cdot   \sup_{|x|\geq \e}|\phi(x)\phi'(x)|^2.
\ea

\section{Partial differential equation for $v(s,x,w)$\label{a:pde}}

\chng[R1: $Y^\theta$ defined, result announced]{
Let $\theta\in(0,1)$. Define for fixed $t>0$ and a function $g\in C^\infty(\bR^2)$ with support inside $D=\{(y,z)\in\bR^2\colon  r(y)-z>0\}$
\begin{align*}
v(s,x,w) & = \E[g(Y_t^\theta,B_t)| Y_s^\theta = x,\, B_s = w], \quad s\in [0,t],\quad  x,w\in \bR,
\end{align*}
where $Y^\theta$ is the oscillating Brownian motion introduced in \eqref{e:Y} in Section \ref{s:Y}.
We will show that this function solves equation $\left(\frac{\partial}{\partial s} + \mathcal L\right) v = 0$
with $\mathcal L$ given by \eqref{e:ell}. 
}

Denoting $\phi_t (x) = \frac{1}{\sqrt{2\pi t}} \ex^{-\frac{x^2}{2t}}$ and taking into account that 
$\psi_t(x) =  - \phi_t'(x) =  \frac{x}{t} \phi_t(x)$ we employ \eqref{eq:YBdensx} and \eqref{eq:YBdistx} to obtain
\ban
v(s,x,w)  = &  \frac{2}{\theta}\int_{-\infty}^{+\infty} \beta^2(y)
\int_{-\infty}^{r(y)-r(x)+w} g(y,z) \psi_{t-s}\Big(\frac{2y\beta^2(y) - \kappa x - z + w}{\theta}\Big)\, \di z\, \di y\\
& + \frac{\beta(x)}{\theta }\int_{y\colon xy> 0} g(y,r(y)-r(x)+w)\cdot  \Big(\phi_{t-s}(r(x)-r(y)) - \phi_{t-s}(r(x)+r(y))\Big)\,\di y.
\ean
The cases $x>0$ and $x<0$ can be treated similarly, so we will consider only the former. Denote for brevity 
\ban
\beta_+ = \frac{1+\theta}{2}, \quad 
g_1(y,z) = \frac{\partial}{\partial z}g(y,z), \quad 
g_2(y,z) = \frac{\partial^2}{\partial z^2}g(y,z), \quad 
h(x,y,z,w) = \frac{2y\beta^{2}(y) - \kappa x - z + w}{\theta}.
\ean
Also note that by \eqref{eq:l+x_0+b}, substituting $z = r(x) -r(y) + w$ into $h(x,y,z,w)$ gives $|r(x)| + |r(y)| = r(x) + |r(y)|$. 
Then 
\ban
\frac{\partial}{\partial x} v(s,x,w) & 
= - \frac{2\beta_+}{\theta} \int_{-\infty}^{\infty}\beta^2(y)\cdot g(y,r(y) - r(x) + w)\cdot \psi_{t-s}(r(x)+|r(y)|)\,\di y\\
& -\frac{2\kappa}{\theta^2} \int_{-\infty}^{\infty}\beta^2(y)\int_{-\infty}^{r(y)-r(x)+w} g(y,z)\cdot  \psi'_{t-s}(h(x,y,z,w))\,\di z\, \di y\\
& - \beta_+^2 \int_0^\infty g_1(y,r(y)-r(x)+w)\cdot \Big[\phi_{t-s}(r(x)-r(y)) - \phi_{t-s}(r(x)+r(y))\Big]\,\di y\\
& + \beta_+^2 \int_0^\infty g(y,r(y)-r(x)+w)\cdot \Big[\phi_{t-s}'(r(x)-r(y)) - \phi_{t-s}'(r(x)+r(y))\Big]\,\di y,\\
\ean
\ban
\frac{\partial}{\partial w} v(s,x,w) & 
= \frac{2}{\theta} \int_{-\infty}^{\infty}\beta^2(y)\cdot g(y,r(y) - r(x) + w)\cdot \psi_{t-s}(r(x) + |r(y)|)\,\di y \\
& +\frac{2}{\theta^2} \int_{-\infty}^{\infty}\beta^2(y)\int_{-\infty}^{r(y)-r(x)+w} g(y,z)\cdot \psi'_{t-s}(h(x,y,z,w))\,\di z\, \di y\\
& + \beta_+ \int_0^\infty g_1(y,r(y)-r(x)+w)\cdot \Big[\phi_{t-s}(r(x)-r(y)) - \phi_{t-s}(r(x)+r(y))\Big]\,\di y.
\ean
Further, 
\ban
\frac{\partial^2}{\partial x^2} v(s,x,w) & 
= \frac{2\beta_+^2}{\theta} \int_{-\infty}^{\infty}\beta^2(y)\cdot g_1(y,r(y) - r(x) + w)\cdot \psi_{t-s}(r(x) + |r(y)|)\,\di y \\
& - \frac{2\beta_+^2}{\theta} \int_{-\infty}^{\infty}\beta^2(y)\cdot g(y,r(y) - r(x) + w)\cdot \psi'_{t-s}(r(x) + |r(y)|)\,\di y\\
& +\frac{2\kappa\beta_+}{\theta^2} \int_{-\infty}^{\infty}\beta^2(y)\cdot g(y,r(y) - r(x) + w)\cdot \psi'_{t-s}(r(x) + |r(y)|)\, \di y\\
& +\frac{2\kappa^2}{\theta^3} \int_{-\infty}^{\infty}\beta^2(y)\int_{-\infty}^{r(y)-r(x)+w} g(y,z)\cdot \psi''_{t-s}(h(x,y,z,w))\,\di z\, \di y\\
& + \beta_+^3 \int_0^\infty g_2(y,r(y)-r(x)+w)\cdot\Big[\phi_{t-s}(r(x)-r(y)) - \phi_{t-s}(r(x)+r(y))\Big]\,\di y\\
& - 2\beta_+^3 \int_0^\infty g_1(y,r(y)-r(x)+w)\cdot\Big[\phi_{t-s}'(r(x)-r(y)) - \phi_{t-s}'(r(x)+r(y))\Big]\,\di y\\
& + \beta_+^3 \int_0^\infty g(y,r(y)-r(x)+w)\Big[\phi_{t-s}''(r(x)-r(y)) - \phi_{t-s}''(r(x)+r(y))\Big]\, \di y,\\
\ean
\ban
\frac{\partial^2}{\partial x\,\partial w} v(s,x,w) & = - 
\frac{2\beta_+}{\theta} \int_{-\infty}^{\infty}\beta^2(y)\cdot g_1(y,r(y) - r(x) + w)\cdot \psi_{t-s}(r(x) + |r(y)|)\,\di y \\
& -\frac{2\kappa}{\theta^2} \int_{-\infty}^{\infty}\beta^2(y)\cdot g(y,r(y) - r(x) + w)\cdot \psi'_{t-s}(r(x) + |r(y)|)\, \di y\\
& -\frac{2\kappa}{\theta^3} \int_{-\infty}^{\infty}\beta^2(y)\int_{-\infty}^{r(y)-r(x)+w} g(y,z)\cdot \psi''_{t-s}(h(x,y,z,w))\,\di z\, \di y\\
& - \beta_+^2 \int_0^\infty g_2(y,r(y)-r(x)+w)\cdot\Big[\phi_{t-s}(r(x)-r(y)) - \phi_{t-s}(r(x)+r(y))\Big]\,\di y\\
& +\beta_+^2 \int_0^\infty g_1(y,r(y)-r(x)+w)\cdot\Big[\phi_{t-s}'(r(x)-r(y)) - \phi_{t-s}'(r(x)+r(y))\Big]\,\di y,\\
\ean
\ban
\frac{\partial^2}{\partial w^2} v(s,x,w) & 
= \frac{2}{\theta} \int_{-\infty}^{\infty}\beta^2(y) g_1(y,r(y) - r(x) + w)\cdot \psi_{t-s}(r(x) + |r(y)|)\,\di y \\
& +\frac{2}{\theta^2} \int_{-\infty}^{\infty}\beta^2(y)\cdot g(y,r(y)-r(x)+w)\cdot \psi'_{t-s}(r(x) + |r(y)|)\,\di z\, \di y\\
& +\frac{2}{\theta^3} \int_{-\infty}^{\infty}\beta^2(y)\int_{-\infty}^{r(y)-r(x)+w} g(y,z)\cdot \psi''_{t-s}(h(x,y,z,w))\,\di z\, \di y\\
& + \beta_+ \int_0^\infty g_2(y,r(y)-r(x)+w)\cdot \Big[\phi_{t-s}(r(x)-r(y)) - \phi_{t-s}(r(x)+r(y))\Big]\,\di y.
\ean
Therefore, taking into account that $\frac{\kappa}{\beta_+} = \frac{1-\theta^2}{1+\theta} = 1-\theta$, we get
\ban
& \Big(\frac{1}{2\beta_+^2}\cdot \frac{\partial^2}{\partial x^2} + \frac{1}{\beta_+}\cdot \frac{\partial^2}{\partial x\,\partial w} 
+ \frac{1}{2}\cdot \frac{\partial^2}{\partial w^2} \Big) v(s,x,w)
\\
&  = \Big(-\frac{1}{\theta} + \frac{\kappa}{\beta_+ \theta^2} - \frac{2\kappa}{\beta_+ \theta^2} 
+\frac{1}{\theta^2}\Big)\int_{-\infty}^{\infty}\beta^2(y)\cdot g(y,r(y) - r(x) + w)\cdot \psi'_{t-s}(r(x) + |r(y)|)\, \di y\\
&\quad + \Big(\frac{\kappa^2}{\beta_+^2 \theta^3} -\frac{2\kappa}{\beta_+ \theta^3} 
+ \frac{1}{\theta^3}\Big)\int_{-\infty}^{\infty}\beta^2(y)\int_{-\infty}^{r(y)-r(x)+w} g(y,z) \cdot \psi''_{t-s}(h(x,y,z,w))\,\di z\, \di y\\
&\quad + \frac{\beta_+}{2} \int_0^\infty g(y,r(y)-r(x)+w)\cdot\Big[\phi_{t-s}''(r(x)-r(y)) - \phi_{t-s}''(r(x)+r(y))\Big]\,\di y 
\\
&  = \Big(-\frac{1}{\theta} - \frac{1-\theta}{\theta^2} +\frac{1}{\theta^2}\Big)
\int_{-\infty}^{\infty}\beta^2(y)\cdot g(y,r(y) - r(x) + w) \psi'_{t-s}(r(x) + |r(y)|) \, \di y\\
&\quad + \Big(\frac{(1-\theta)^2}{\theta^3} -\frac{2(1-\theta)}{\theta^3} + \frac{1}{\theta^3}\Big)
\int_{-\infty}^{\infty}\beta^2(y)\int_{-\infty}^{r(y)-r(x)+w} g(y,z)\cdot \psi''_{t-s}(h(x,y,z,w))\,\di z\, \di y\\
&\quad + \frac{\beta_+}{2} \int_0^\infty g(y,r(y)-r(x)+w)\cdot\Big[\phi_{t-s}''(r(x)-r(y)) - \phi_{t-s}''(r(x)+r(y))\Big]\,\di y
\ean
\ban
& = \frac{1}{\theta} \int_{-\infty}^{\infty}\beta^2(y)\int_{-\infty}^{r(y)-r(x)+w} g(y,z) \cdot \psi''_{t-s}(h(x,y,z,w))\,\di z\, \di y\\
&\quad + \frac{\beta_+}{2} \int_0^\infty g(y,r(y)-r(x)+w)\cdot \Big[\phi_{t-s}''(r(x)-r(y)) - \phi_{t-s}''(r(x)+r(y))\Big]\,\di y.
\ean
On the other hand, 
\ban
\frac{\partial}{\partial s} v(s,&x,w)  
= \frac{2}{\theta}\int_{-\infty}^{\infty}\int_{-\infty}^{r(y)-r(x)+w} g(y,z) \frac{\partial}{\partial s} \psi_{t-s}(h(x,y,z,w))\, \di z\, \di y\\
& + \beta_+\int_0^\infty g(y,r(y)-r(x)+w) \frac{\partial}{\partial s}\cdot\Big[\phi_{t-s}(r(x)-r(y)) - \phi_{t-s}(r(x)+r(y))\Big]\,\di y. 
\ean
Taking into account that $\frac{\partial}{\partial t} \phi_t(x) =\frac12 \phi''_t(x)$ and 
$\frac{\partial}{\partial t} \psi_t(x) = \frac12\psi''_t(x)$, we arrive at the desired equation
\ban
\Big(\frac{\partial}{\partial s} + \frac{1}{2\beta^2(x)}\cdot \frac{\partial^2}{\partial x^2} 
+ \frac{1}{\beta(x)}\cdot \frac{\partial^2}{\partial x\,\partial w} + \frac{1}{2}\cdot \frac{\partial^2}{\partial w^2} \Big) v(s,x,w) = 0,\  x>0.
\ean
Dealing similarly with the case $x<0$, we get 
\begin{equation*}\label{eq:pde-v}
\Big(\frac{\partial}{\partial s} +  \mathcal L\Big) v(s,x,w) = 0,\quad  x\neq 0,\  w\in \bR,
\end{equation*}
where
\begin{equation}\label{eq:Lf}
\mathcal L f(x,w) = \Big(\frac{\sigma^2(x)}{2}\cdot \frac{\partial^2}{\partial x^2} 
+ \sigma(x)\cdot \frac{\partial^2}{\partial x\,\partial w} + \frac{1}{2}\cdot \frac{\partial^2}{\partial w^2}\Big) f(x,w).
\end{equation}
%It is not hard to see to see that $v\in C^{1,2}([0,t)\in \bR)\cap C^{1,1,2}([0,t]\in \bR)$, moreover, the derivatives $\frac{\partial^2}{\partial x^2} v$ and $\frac{\partial^2}{\partial x\, \partial w} v$ are continuous and bounded in $x$ for $x=0$, moreover, absolutely continuous on $\bR^2$ with weak derivatives equal to $\frac{\partial^2}{\partial x^2} v(s)$ and $\frac{\partial^2}{\partial x\,\partial w}v(s)$. So $v$ solves \eqref{pde-v}
Further, let $f\in C^\infty(\bR^2)$ have bounded support inside 
$\{(x,w)\colon  w<r(x)\}$, and 
\ban
p(s,x,w) = \frac{2\beta^2(x)}{\theta} \psi_s\Big(\frac{2x\beta^2(x) - w}{\theta}\Big)
\ean
as given by \eqref{eq:jointYBdensity}.
Denoting $\beta_- = \frac{1-\theta}{2}$ and using integration by parts, we write
\ban
\int_{-\infty}^\infty \beta^2(x)\cdot &p(s,x,w)\cdot f(x,w) \cdot \frac{\partial^2}{\partial x^2}v(s,x,w)\, \di x\\
&  = f(0,w)\cdot \Big[ \beta_-^2\cdot p(s,0-,x)\cdot \frac{\partial}{\partial x} v(s,0-,w) - \beta_+^2 \cdot p(s,0+,w)\cdot \frac{\partial}{\partial x}v(s,0+,w)\Big] \\
&-  \int_{-\infty}^{\infty} \beta^2(x)\cdot\frac{\partial}{\partial x}v(s,x,w)\cdot \frac{\partial}{\partial x}\Big(p(s,x,w)f(x,w)\Big)\, \di x.
\ean
We have 
\ban
& \beta_-^2\cdot p(s,0-,x)\frac{\partial}{\partial x}v(s,0-,w) - \beta_+^2\cdot p(s,0+,w)\frac{\partial}{\partial x}v(s,0+,w)\\ 
& \qquad = \frac{2}{\theta}\psi_t\Big(-\frac{ w}{\theta}\Big) \cdot 
\Big[ 2 \int_{-\infty}^{\infty}\beta^2(y)\cdot g(y,r(y) + w)\cdot \psi_{t-s}(|r(y)|)\,\di y \\
& \qquad\qquad+\beta^2_- \int_{-\infty}^0 g(y,r(y)+w)\cdot \Big[\phi_{t-s}'(-r(y)) - \phi_{t-s}'(r(y))\Big]\,\di y \\
& \qquad\qquad-\beta_+^2 \int_0^\infty g(y,r(y)+w)\cdot \Big[\phi_{t-s}'(-r(y)) - \phi_{t-s}'(r(y))\Big]\,\di y\Big] = 0,
\ean
since $\varphi'_t = \psi_t$ and $\psi_t(-x) = -\psi_t(x)$. 
Further, using integration by parts again and noting that $v$ is continuous, we get
\ban
&\int_{-\infty}^{\infty} \beta^2(x)\cdot \frac{\partial}{\partial x}v(s,x,w)\cdot \frac{\partial}{\partial x}\Big(p(s,x,w)f(x,w)\Big)\, \di x \\
&\qquad = v(s,0,w)\cdot \frac{\partial}{\partial x} f(0,w)\cdot \Big(\beta_-^2\cdot p(s,0-,w) - \beta_+^2\cdot p(s,0+,w)\Big)
\\ 
&\qquad\qquad  + v(s,0,w)\cdot  f(0,w)\Big(\beta_-^2\cdot \frac{\partial}{\partial x}p(s,0-,w) - \beta_+^2\cdot\frac{\partial}{\partial x}p(s,0+,w)\Big)
\\
&\qquad\qquad -  \int_{-\infty}^{\infty} \beta^(x)^2\cdot  v(s,x,w)\frac{\partial^2}{\partial x^2}\cdot \Big(p(s,x,w)f(x,w)\Big)\,\di x\\
&\qquad = - 4v(s,0,w)\cdot  f(0,w)\cdot \psi'_s\Big(\frac{w}{\theta}\Big)
\\&\qquad\qquad-  \int_{-\infty}^{\infty} \beta^2(x) \cdot  v(s,x,w)\cdot \frac{\partial^2}{\partial x^2}\Big(p(s,x,w)f(x,w)\Big)\, \di x.
\ean
Integrating with respect to $w$ then leads to 
\begin{align*}
\scprod{\beta^2 \frac{\partial^2}{\partial x^2}v(s), fp(s)} = \scprod{v(s), \beta^2 \frac{\partial^2}{\partial x^2}\big(fp(s)\big)} 
+ 4\int_{-\infty}^\infty v(s,0,w)f(0,w)\psi'_s\Big(\frac{w}{\theta}\Big)\, dw.
\end{align*}
Similarly, integrating by parts with respect to  $w$,
\ban
\scprod{\beta \frac{\partial^2}{\partial x\,\partial w}v(s), fp(s)} = 
 - \scprod{\beta \frac{\partial}{\partial x}v(s),\frac{\partial}{\partial w} \big(fp(s)\big)}.
\ean
and integrating by parts with respect to $x$, 
\ban
&\int_{-\infty}^\infty \beta(x)\cdot \frac{\partial}{\partial w}\big(p(s,x,w)f(x,w)\big) \frac{\partial}{\partial x}v(s,x,w)\, \di x\\
& \qquad = v(s,0,w)f(0,w)\Big(\beta_-\frac{\partial}{\partial w} p(s,0-,x) - \beta_+\frac{\partial}{\partial w}p(s,0+,w)\Big) \\
&\qquad\qquad - \int_{-\infty}^\infty \beta(x)\cdot v(s,x,w)\frac{\partial^2}{\partial x\,\partial w}\Big(p(s,x,w)f(x,w)\Big)
 -2 v(s,0,w)f(0,w) \psi'_s\Big(\frac{w}{\theta}\Big), 
\ean
so 
\ban
\scprod{\beta \frac{\partial^2}{\partial x\,\partial w}v(s), fp(s)} 
= \scprod{v(s), \beta \frac{\partial^2}{\partial x\,\partial w}\big(fp(s)\big)} - 2\int_{-\infty}^\infty v(s,0,w)f(0,w)\psi'_s\Big(\frac{w}{\theta}\Big)\,dw.
\ean

Finally, integrating by parts with respect to $w$ twice, 
\ban
\scprod{ \frac{\partial^2}{\partial w^2} v(s), fp(s)} = \scprod{ v(s), \frac{\partial^2}{\partial w^2} \big(fp(s)\big)}.
\ean
Summing everything, we get, with $\mathcal L$ given by \eqref{eq:Lf}, that
\be\label{eq:Libp}
\scprod{\mathcal L v(s), fp(s)\vphantom{\big)}} = \scprod{v(s),\mathcal L \big(fp(s)\big)}.
\ee

\section{Proof of Theorem~\ref{t:cov} for $\theta=\pm 1$}\label{a:petit}

Without loss of generality, let $\theta=1$, so that the skew Brownian motion $B^1$ is a non-negative reflected Brownian motion. 
The process $B^1$ has a density w.r.t.\ the Lebesgue measure 
\ba
p(t,z)&=\frac{\sqrt{2}}{\sqrt{\pi t}}\ex^{-\frac{z^2}{2t}},\quad z\geq 0.
\ea
On the time interval $t\in[0,T]$, consider the time reversed process $\bar B^1_t:=B^1_{T-t}$. Denote $b(s,z) = - \frac{\partial}{\partial z}\log p(s,z) = \frac{z}{t}$, $z\ge 0$, $t>0$.

\begin{lem}[Lemma 2.1, \cite{Petit-97}]
Define the process 
\ba
\bar W_t&:=
\bar B^1_t -\bar B^1_0-   L_t(\bar B^1) 
+\int_0^t  b (T-s,\bar B^1_s) \, \di s,\\
&=-B_T+B_{T-t}- L_t(B^1)+  L_{T-t}(B^1)
+\int_{T-t}^T  b (s, B^\theta_s) \, \di s,\quad t\in[0,T].
\ea
Then the process $\bar W$ is a $\rF^{\bar B^1}$- Brownian motion and
the time reversed process $\bar B^1$ is a solution to the SDE 
\ba
\label{e:barB1}
&\bar B^1_t= \bar B^1_0+ \bar W_t + L_t(\bar B^1) 
+\int_0^t b(T-s,\bar B^1_s) \, \di s.
% &\Law(\bar B^\theta_0)= p^\theta(T,\cdot).
\ea

Moreover, the time-reversed Brownian motion $\bar B_t=B_{T-t}$ satisfies
\ba
\label{e:barB}
\bar B_t&=B^1_{T-t}- L_{T-t}(B^1)=\bar B^1_{t} -  L_{t}(\bar B^1)
=   \bar B^1_0 + \bar W_t  
+\int_0^t b(T-s,\bar B^1_s) \, \di s .
\ea
\end{lem}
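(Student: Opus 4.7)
The plan is to apply the time-reversal theorem for reflected diffusions (Petit 1997) to the reflected Brownian motion $B^1$ on $[0,T]$, and then derive the second identity by using the basic relation $B_t = B^1_t - L_t(B^1)$.

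First I would set up the ingredients needed for time reversal. The transition density of the reflected Brownian motion starting at $0$ is
\ban
p(t,z) = \sqrt{\tfrac{2}{\pi t}}\,\ex^{-z^2/(2t)},\quad z\ge 0,\ t>0,
\ean
so that
\ban
b(s,z) = -\frac{\partial}{\partial z}\log p(s,z) = \frac{z}{s},\quad z\ge 0,\ s>0,
\ean
which is precisely the drift appearing in the statement. The classical Haussmann--Pardoux scheme \citep{haussmann1985time,haussmann1986time}, adapted to the reflected case by \cite{Petit-97}, asserts that the time-reversed process $\bar B^1_t = B^1_{T-t}$ is itself a semimartingale in its own filtration, with a Brownian innovation $\bar W$, a nonnegative reflection term equal to the local time $L_t(\bar B^1)$ of $\bar B^1$ at $0$, and a drift $b(T-s,\bar B^1_s)$ coming from the logarithmic derivative of the density. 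This gives the first identity \eqref{e:barB1}.

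Next I would match the local-time contributions. Since $B^1$ only increases its local time on the zero set, the additivity of local time yields
\ban
L_t(\bar B^1) = L_T(B^1) - L_{T-t}(B^1),
\ean
and equating the two representations of $\bar B^1_t - \bar B^1_0$ delivers the explicit form of $\bar W_t$ given in the lemma.

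For the second claim, I would use the fundamental Skorokhod-type identity $B_t = B^1_t - L_t(B^1)$, which follows directly from $B^1_t = B_t + L_t(B^1)$. Applying this at time $T-t$ and subtracting at time $T$ gives
\ban
\bar B_t = B_{T-t} = B^1_{T-t} - L_{T-t}(B^1) = \bar B^1_t - \bigl(L_T(B^1)-L_{T-t}(B^1)\bigr) = \bar B^1_t - L_t(\bar B^1),
\ean
and substituting \eqref{e:barB1} produces the final representation \eqref{e:barB}, in which the reflection term cancels, leaving only Brownian noise plus the drift. The main technical obstacle is the rigorous justification of time reversal at the reflecting boundary --- in particular ensuring that the boundary local time of $\bar B^1$ is correctly identified with the decreasing rearrangement of $L(B^1)$ and that $\bar W$ is a Brownian motion in the reversed filtration --- but this is exactly the content of \cite[Lemma~2.1]{Petit-97} and can be invoked directly.
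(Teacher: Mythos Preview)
The paper does not give its own proof of this lemma; it is stated with the attribution ``Lemma~2.1, \cite{Petit-97}'' and used as a black box. Your proposal is therefore fully aligned with the paper's treatment: you sketch the Haussmann--Pardoux time-reversal heuristic, identify the reversed local time, and then explicitly defer the rigorous justification to \cite{Petit-97}, which is exactly what the paper does.
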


\begin{prp}
\label{t:covB1}
Let $f\in L^2_{\mathrm{loc}}(\bR )$ and let the reflected Brownian motion $B^1$
be the unique strong solution of the SDE \eqref{e:sdesbm}.
Then the quadratic variation
\ben
 [f(B^1),B]_t = \lim_{n\to\infty} \sum_{t_k\in D_n,t_k< t} 
 \big(f(B^1_{t_k})-f(B^1_{t_{k-1}})\big)(B_{t_k}-B_{t_{k-1}})
\een
exists as a limit in u.c.p.
Moreover, let $\{h_m\}_{m\geq 1}$ be a sequence of continuous functions such that for each $A>0$
\ba
\lim_{m\to \infty}\int_{-A}^A |h_m(x)-f(x)|^2\,\di x =0.
\ea
Then
\ba
\label{e:convbracket}
[h_m(B^1),B]_t\to  [f(B^1),B]_t, \quad m\to\infty,
\ea
in u.c.p.
\end{prp}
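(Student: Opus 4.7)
My plan parallels the proof of Theorem~\ref{t:cov} for $\theta\in(-1,1)\setminus\{0\}$, replacing the Haussmann--Pardoux time-reversal for $(Y^\theta,B)$ with the Petit formulas \eqref{e:barB1}--\eqref{e:barB} for the reflected Brownian motion. As in that proof, I would first decompose the Riemann sum as
\begin{align*}
\sum_{t_k<t}\big(h(B^1_{t_k})-h(B^1_{t_{k-1}})\big)&(B_{t_k}-B_{t_{k-1}})\\
&=\sum_{t_k<t}h(B^1_{t_k})(B_{t_k}-B_{t_{k-1}})-\sum_{t_k<t}h(B^1_{t_{k-1}})(B_{t_k}-B_{t_{k-1}}),
\end{align*}
so that $[h(B^1),B]_t$ appears as the difference between a backward and a forward Riemann sum. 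For continuous $h$, the forward sum converges in u.c.p.\ to $\int_0^t h(B^1_s)\,\di B_s$ by \cite[Theorem~21, p.~64]{Protter-04}.

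Next, for the backward sum I would fix $T>0$, set $\bar t_k=T-t_k$, and rewrite
\begin{equation*}
\sum_{t_k<t}h(B^1_{t_k})(B_{t_k}-B_{t_{k-1}})=-\sum_{t_k<t}h(\bar B^1_{\bar t_k})(\bar B_{\bar t_{k-1}}-\bar B_{\bar t_k}).
\end{equation*}
Since by \eqref{e:barB} the time-reversed process $\bar B$ is a semimartingale of the form $\bar B_\cdot=\bar B_0+\bar W_\cdot+\int_0^\cdot b(T-s,\bar B^1_s)\,\di s$ with $b(s,z)=z/s$, Protter's theorem applied to $\bar B$ together with continuity of $h$ yields the u.c.p.\ limit
\begin{equation*}
-\int_{T-t}^T h(\bar B^1_s)\,\di\bar W_s-\int_{T-t}^T h(\bar B^1_s)\,b(T-s,\bar B^1_s)\,\di s.
\end{equation*}
Combining the two limits establishes existence of $[h(B^1),B]_t$ for continuous $h$.

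To extend to $f\in L^2_{\mathrm{loc}}(\bR)$, I would localize to reduce to $f\in L^2(\bR_+)$, pick continuous $h_m\to f$ in $L^2(\bR_+)$, and show that each of the three pieces in the decomposition above depends continuously on the integrand. The basic estimate is $p(s,z)=\sqrt{2/(\pi s)}\,\ex^{-z^2/(2s)}\le C s^{-1/2}$, giving $\E|(h_m-f)(B^1_s)|^2\le C\|h_m-f\|^2_{L^2}\,s^{-1/2}$; combined with Doob's inequality and $\int_0^T s^{-1/2}\,\di s<\infty$, this controls both the forward It\^o integral and the $\bar W$-integral. For the drift, $\E|b(s,B^1_s)|^2=\E(B^1_s)^2/s^2=1/s$, so Cauchy--Schwarz yields
\begin{equation*}
\int_0^T\E|(h_m-f)(B^1_s)\,b(s,B^1_s)|\,\di s\le C\|h_m-f\|_{L^2}\int_0^T s^{-3/4}\,\di s,
\end{equation*}
which is $O(\|h_m-f\|_{L^2})$. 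A parallel argument for the discrete drift sums (as in \eqref{eq:sumhby}) together with the convergence for continuous $h_m$ established above yields \eqref{e:convbracket}.

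The main obstacle is the interplay between the time-inhomogeneous drift $b(s,z)=z/s$, whose $L^2$-norm under the reflected law blows up like $s^{-1/2}$, and the heat-kernel degeneracy $p(s,\cdot)\le Cs^{-1/2}$ that limits how much regularity can be imported from $L^2_{\mathrm{loc}}$-data. The Cauchy--Schwarz exponent $s^{-3/4}$ above is precisely on the borderline between integrable and non-integrable at $s=0$, so the scheme is tight; once this quantitative balance is observed, the Petit time-reversal can be invoked as a black box and the remainder is a mechanical adaptation of the proof of Theorem~\ref{t:cov}.
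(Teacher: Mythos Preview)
Your proposal is correct and follows essentially the same approach as the paper: you use the Petit time-reversal formulas \eqref{e:barB1}--\eqref{e:barB} to represent $\bar B$ as a semimartingale, decompose the bracket into forward and backward It\^o integrals plus a drift term with $b(s,z)=z/s$, and control the three pieces via the density bound $p(s,z)\le Cs^{-1/2}$ and the Cauchy--Schwarz estimate yielding the $s^{-3/4}$ integrand. The paper carries out precisely these steps, including the discrete drift-sum estimate you refer to via \eqref{eq:sumhby}.
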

\begin{proof}
% Let for deniniteness $\theta=1$.
% 
% Note that $f(B^\theta_t,B_t) = f(r(Y_t^\theta),B_t) = g(Y_t^\theta,B_t)$ with $g\in L^2_{\mathrm{loc}}(\bR^2)$, so it suffices 
% to establish a similar statement for $Y^\theta$. The rest of proof goes similarly to \cite{FPSh-95}. 

1. First we note that for a square integrable continuous function $h$, the quadratic variation 
\ban
{} [h(B^1),B]_t = \lim_{n\to\infty} \sum_{t_k\in D_n,t_k<t} \Big(h(B^1_{t_k})-h(B^1_{t_{k-1}})\Big)(B_{t_k}-B_{t_{k-1}})
\ean
exists as a limit in u.c.p. Indeed arguing as in \cite[Proposition 3.1]{FPSh-95}
we see that since $B$ is a semimartingale, and $h(B^1)$ is an adapted continuous process, the Riemannian sums converge to the It\^o integral in u.c.p.\
by \cite[Theorem 21, p.\ 64]{Protter-04}:
$$
\lim_{n\to\infty} \sum_{t_k\in D_n,t_k<t} \sum_{k=1}^n h(B^1_{t_{k-1}})(B_{t_k}-B_{t_{k-1}}) = \int_0^t h(B^1_s)\,\di B_s. 
$$

Let $T>0$ be fixed.
The time-reversed processes $(\bar B^1,\bar B)$ satisfy \eqref{e:barB1} and \eqref{e:barB} and are also
semimartingales so that for the backward Riemannian sums we get the u.c.p.\ convergence
\ban
\lim_{n\to\infty} \sum_{t_k\in D_n,t_k<t} &\sum_{k=1}^n h(B^1_{t_{k}})(B_{t_k}-B_{t_{k-1}}) 
= \lim_{n\to\infty} \sum_{t_k\in D_n,t_k<t} \sum_{k=1}^n h(\bar B^1_{T-t_{k}})(\bar B_{T-t_k}-\bar B_{T-t_{k-1}})\\ 
&=-\lim_{n\to\infty} \sum_{s_k\in D_n,s_{k+1}\geq t} h(\bar B^1_{s_{k}})(\bar B_{s_{k+1}}-\bar B_{s_{k}})
=-\int_{T-t}^{T} h(\bar B^1_s)\,\di \bar B_s\\
&= \int_{0}^{t} h( B^1_s)\,\di^* B_s.
\ean
Hence, the quadratic variation process exists as a difference
\ba
\label{e:qv}
{}[h(B^1),B]_t = \int_0^t h(B^1_s)\,\di^* B_s - \int_0^t h(B_s^1)\,\di B_s.
\ea

\noindent 
2. Let us extend the formula \eqref{e:qv} to locally square integrable functions. First note that by usual localization argument 
we can assume that $f\in L^2(\bR )$. Note that
$\E\int_0^T |f(B^1_t)|^2\,\di t<\infty$, so that the It\^o integral $I(t)=\int_0^t f(B^1_s)\,\di B_s$ exists.

Let $h$ be a square integrable continuous function.
Then the estimate $p(s,z)\leq \frac{1}{\sqrt s}$ for the density of $B^1_s$, the Doob inequality and the It\^o isometry yield 
\begin{align*}
 \E  \sup_{t\in[0,T]}\Big|\int_0^t h(B^1_s)\,\di B_s - \int_0^t f(B^1_s)\,\di B_s \Big|^2  & 
 \leq 4 \int_0^T \E  \big(f(B^1_t) - h(B^1_t)\big)^2 \,\di t \\
&  = 4\int_0^T \int_{0}^\infty \big(f(z) - h(z)\big)^2 p(t,z)\,\di z\,\di t\\
& \le  4  \|f - h\|^2_{L^2(\bR )}\cdot \int_0^T \frac{\di t}{\sqrt{t}}\le C \cdot \|f - h\|^2_{L^2(\bR )},
\end{align*}
for some constant $C>0$.
Let now
\ban
S_{n}(t) & := \sum_{t_k\in D_n, t_k\leq t} \big(h(B^1_{t_{k-1}}) - f(B^1_{t_{k-1}})\big) (B_{t_k}-B_{t_{k-1}}).
\ean
Similarly to the argument above we get
\begin{align*}
\E \sup_{t\in[0,T]} |S_{n}(t)|^2 
 & =   \E \sum_{t_k\in D_n} \big(h(B^1_{t_{k-1}}) - f(B^1_{t_{k-1}})\big)^2(t_k-t_{k-1})   \\
& \le C \cdot \sum_{t_k\in D_n,k>1} \frac{t_{k}-t_{k-1}}{\sqrt{t_{k-1}}}\|h-f\|^2_{L^2(\bR )}\\
&\leq C \cdot \|h-f\|^2_{L^2(\bR )}\cdot \sup_{n}\sum_{t_k\in D_n,k>1} \frac{t_{k}-t_{k-1}}{\sqrt{t_{k-1}}}
\leq C \cdot \|h-f\|^2_{L^2(\bR )}.
\end{align*}
As a result, we get
\ban
\limsup_{n\to\infty}\E \sup_{t\in[0,T]} \Big| \sum_{t_k\in D_n, t_k<t} h(B^1_{t_{k-1}})(B_{t_k}-B_{t_{k-1}}) -\int_0^t f(B^1_s)\,\di B_s \Big|^2 
\leq C \cdot \|h-f\|^2_{L^2(\bR )}.
\ean

3. To treat the backward integral recall again that the time-reversed process $(\bar{B}^1_t,\bar B_t) = (B^1_{T-t},B_{T-t})$ satisfies 
\eqref{e:barB1} and \eqref{e:barB} with the Brownian motion $\bar W$. Then for $h\in C(\bR )$ we can write 
\begin{align*}
&\int_0^t h(B_s^1)\, \di^* B_s  =  \int_{T-t}^T h(\bar B^1_s)\,\di \bar{B}_s 
= \int_{T-t}^T h(\bar{B}^1_s)\, \di \bar W_s 
+ \int_{T-t}^T h(\bar{B}^1_s) b(T-s,\bar{B}^1_s )\, \di s .
\end{align*}
Repeating the arguments from Step 2, we obtain the existence of the It\^o integral $\int_{T-t}^T f(\bar{B}^1_s)\,\di \bar W_s$ and the estimate
\ban
\limsup_{n\to\infty}\E \sup_{t\in[0,T]} \Big| \sum_{s_k\in D_n, s_{k+1}>t}  h(\bar B^1_{s_{k}})(\bar W_{s_{k+1}}-\bar W_{s_{k}})  - 
\int_{T-t}^T f(\bar{B}^1_s)\,\di \bar W_s  \Big|^2 \leq C \cdot \|h-f\|^2_{L^2(\bR )}.
\ean

It remains to establish an estimate for the Lebesque integral $\int_{0}^t f(B^1_s) b(s,B^1_s )\, \di s$:
\ban
\int_{0}^T \E\Big|f(B^1_s) b(s,B^1_s )\Big|\, \di s
&\leq \int_{0}^T \Big( \E |f(B^1_s)|^2 \cdot \E\Big|b(s,B^1_s )\Big|^2\Big)^{1/2}\, \di s.
\ean
Then
\ban
\E |f(B^1_s)|^2= \int_0^\infty |f(z)|^2 \cdot p(s,z)\,\di z\leq \frac{C}{\sqrt s}\|f\|_{L^2(\bR )}^2
\ean
and, recalling that $b(t,z) = \frac{z}{t}$,
\ban
\E\Big|b(s,B^1_s )\Big|^2
&=
\int_0^\infty \frac{z^2}{s^2}\cdot p(s,z)\,\di z
\leq \frac{C}{s^{5/2}} \int_0^\infty z^2 \ex^{-z^2/2s}\,\di z =\frac{C}{s}.
\ean
Hence
\ban
\int_{0}^T \E\Big|f(B^1_s)b(s,B^1_s )\Big|\, \di s\leq C \cdot \|f\|_{L^2(\bR )}\cdot \int_0^T\frac{\di s}{s^{3/4}}
\leq  C \cdot \|f\|_{L^2(\bR )}.
\ean
Furthermore for a continuous and square integrable $h$ 
\begin{equation}\label{eq:sumgb->intgbB1}
\begin{aligned}
&\sum_{t_k\in D_n, t_k<t} h(B_{t_{k}}^1)\int_{t_{k-1}}^{t_k}  b p(T-s,\bar{B}^1_s )     \, \di s  
\\&= - \sum_{t_k\in D_n, t_k<t} h(\bar B^1_{T-t_{k}})\int_{T-t_{k-1}}^{T-t_k} \frac{\bar B_s^1}{s}\, \di s 
 \to - \int_{T-t}^T  h(\bar B^1_{s})    \frac{\bar B_s^1}{s}     \, \di s 
% = \int_0^t   h(\bar B^1_{T-t_{k}})    \frac{\bar B_s^1}{s}         \,\di s,\ n\to\infty,
\end{aligned}
\end{equation}
 uniformly on $[0,T]$ in probability as $n\to\infty$. 
Since
\ban
\delta_n = \max_{t_k\in D_n}\sup_{s\in[t_{k-1},t_k]} |h(B^1_{t_{k}}) - h(B_s^1)  |\to 0,\ n\to\infty,
\ean
almost surely and we can estimate
\begin{align*}
& \Big|\sum_{t_k\in D_n, t_k<t} h(B^1_{t_{k}})\int_{t_{k-1}}^{t_k} b(T-s,B_s^1)\, \di s 
- \int_0^t h(B_s^1) b(T-s,B_s^1)\,\di s\Big|
 \le \delta_n \int_0^T \frac{\bar B_s^1}{s}    \,\di s.
\end{align*}
Similarly to the calculations above, 
\ban
\E  \int_0^T\frac{\bar B_s^1}{s}      \, \di s  \leq  C\int_0^T \frac{\di s}{\sqrt s} \le C.
\ean
Therefore
\ban
\sup_{t\in[0,T]}\Big|\sum_{t_k\in D_n, t_k<t} h(B^1_{t_{k}})\int_{t_{k-1}}^{t_k} b(T-s,B_s^1)\, \di s 
- \int_0^t h(B_s^1) b(T-s,B_s^1)\,\di s\Big|
\to 0
\ean
in probability as $n\to\infty$. 

Eventually taking a sequence $h_m\in C(\bR )$ converging to $f$ in $L^2(\bR )$  we arrive at 
\eqref{eq:sumgb->intgbB1}.
Combined with our previous findings, this leads to
\ban
&[f(B^1),B]_t=\lim_{n\to \infty} \sum_{t_k\in D_n,t_k<t} \Big(f(B^1_{t_k})-f(B^1_{t_{k-1}})\Big)(B_{t_k}-B_{t_{k-1}})
\\&= \int_0^t f(B^1_{s})\,\di B_s - \int_0^t f(B^1_{s})\,\di^* B_s
\ean
uniformly on $[0,T]$ in probability. Since $T>0$ is arbitrary, this  means precisely that the desired u.c.p.\ convergence holds.

\end{proof}

\end{document}